\documentclass [twoside,reqno,12pt]{amsart}
\usepackage[left=1in,right=1in,top=1in,bottom=1in]{geometry}

\usepackage{amsfonts}
\usepackage{amssymb}
\usepackage{amsthm}
\usepackage{mathrsfs}
\usepackage{xcolor}
\usepackage{graphics}
\usepackage{comment}
\usepackage[normalem]{ulem}
\usepackage{cancel}

\newtheorem{thm}{Theorem}[section]
\newtheorem{cor}[thm]{Corollary}
\newtheorem{lem}[thm]{Lemma}
\newtheorem{prop}[thm]{Proposition}

\newtheorem{assumption}{Assumption}

\usepackage[hidelinks]{hyperref}

\theoremstyle{definition}

\numberwithin{equation}{section}

\renewcommand{\Re}{\mathrm{Re}}
\renewcommand{\Im}{\mathrm{Im}}

\newcommand{\cA}{\mathcal{A}}

\newcommand{\C}{\mathbb{C}}

\newcommand{\n}[1]{\langle #1 \rangle}
\newcommand{\N}{\mathbb{N}}

\newcommand{\R}{\mathbb{R}}
\newcommand{\cR}{\mathcal{R}}

\def\tilde{\widetilde}
\def \bfo {\begin {eqnarray*} }
\def \efo {\end {eqnarray*} }
\def \ba {\begin {eqnarray*} }
\def \ea {\end {eqnarray*} }
\def \beq {\begin {eqnarray}}
\def \eeq {\end {eqnarray}}
\def \supp {\hbox{supp }}

\def \p {\partial}

\newcommand{\cO}{\mathcal{O}}

\def\tilde{\widetilde}
\def \bfo {\begin {eqnarray*} }
\def \efo {\end {eqnarray*} }
\def \ba {\begin {eqnarray*} }
\def \ea {\end {eqnarray*} }
\def \beq {\begin {eqnarray}}
\def \eeq {\end {eqnarray}}
\def \supp {\hbox{supp }}

\def \p {\partial}

\title[Inverse problem for a nonlinear dynamical Schr\"odinger operator]{Inverse problems for a nonlinear dynamical Schr\"odinger operator with magnetic potential}

\author[Kumar]{Mandeep Kumar}
\address{M. Kumar, 
Department of Mathematics\\
Indian Institute of Technology, Ropar, Rupnagar, Punjab, 140001, India}
\email{mandeep.24maz0011@iitrpr.ac.in}

\author[Liu]{Boya Liu}
\address{B. Liu, Department of Mathematics\\
North Dakota State University, Fargo\ 
ND 58102, USA}
\email{boya.liu@ndsu.edu}

\author[Vashisth]{Manmohan Vashisth}
\address{M. Vashisth, Department of Mathematics\\
Indian Institute of Technology, Ropar, Rupnagar, Punjab, 140001, India}
\email{manmohanvashisth@iitrpr.ac.in}

\begin{document}

\begin{abstract}

We study two inverse problems for a nonlinear dynamical Schrödinger equation with time-dependent magnetic and electric potentials. Under suitable analyticity assumptions, we show that the associated Dirichlet-to-Neumann map uniquely determines the linear magnetic potential and all coefficients of the nonlinear electric potential. We establish both full-data and partial-data uniqueness results. For the partial data problem, assuming that the coefficients are known in a neighborhood of the boundary, uniqueness is obtained using measurements made on arbitrarily small open subsets of the boundary.  In addition, we establish the well-posedness of the forward problem.
\end{abstract}

\maketitle

\section{Introduction and Statement of Results}

Let $\Omega \subseteq \R^n$, $n\ge 2$, be a bounded open set with smooth boundary $\p \Omega$. For any $T>0$, let  $Q:=(0,T)\times \Omega$ denote the space-time domain, with its lateral boundary given by $\Sigma := (0,T) \times \p \Omega$. We also denote by $\nu$  the outward unit normal to $\partial\Omega$, and by $\nabla$ the gradient with respect to the spatial variable. 

Consider the nonlinear dynamical Schr\"odinger operator $\mathcal{L}_{\mathcal{A},q}$ defined by 
\begin{equation}
\label{eq:magnetic_operator}
\begin{aligned}
\mathcal{L}_{\mathcal{A},q}u &:= \mathrm{i}\partial_t u -
\left(-\mathrm{i}\nabla+\mathcal{A}(t,x)\right)^2u +
q(t,x,u)\\
&= \mathrm{i}\partial_t u+\Delta u
+2\mathrm{i} \mathcal{A}(t,x)\cdot\nabla u 
+\mathrm{i} (\nabla\cdot\mathcal{A}(t,x)) u
-|\mathcal{A}(t,x)|^2u
+q(t,x,u),
\end{aligned}
\end{equation}
with a linear time-dependent  \textit{magnetic potential}  $\mathcal{A}\in C^{\infty}\left(\overline{Q},\R^{n}\right)$, as well as  a nonlinear time-dependent \textit{electric potential} $q(t,x,u)$. Throughout this paper, we  assume that the nonlinear potential $q: \overline{Q} \times \mathbb{C} \rightarrow \mathbb{C}$
satisfies the following properties:

\begin{assumption}
\label{assump2}
The map \(\mathbb{C} \ni z \mapsto q(\cdot,\cdot, z)\) is holomorphic with values in   \(C^\infty (\overline{Q}, \C)\).
\end{assumption}

\begin{assumption}
\label{assump3}
$q (t,x, 0)= 0$ for all $(t,x) \in \overline{Q}$.
\end{assumption}
\noindent 
Then it follows from Assumptions \ref{assump2}--\ref{assump3} that  $q$ possesses the following power series expansion: 
\begin{equation}
\label{eq:expansion_q}
q(t,x, z)=\sum_{k=1}^{\infty} q_k(t,x) \frac{z^{k}}{k!}, \quad 
q_k(t,x) :=  \partial_{z}^{k} q(t,x,0),
\end{equation}
and the series converges in the $C^\infty$-topology. 
From a physical perspective, the operator $\mathcal{L}_{\mathcal{A},q}$ appears in the study of quantum mechanics of charged particles, as well as nonlinear optics, where $q(t,x,u)$ models the nonlinear polarization of the medium. 

Consider the following initial boundary value problem (IBVP) for the nonlinear operator $\mathcal{L}_{\cA, q}$:
\begin{equation}
\label{eq:ibvp_nonlinear}
\begin{cases}
\mathcal{L}_{\mathcal{A},q} u=0 & \text{ in } Q,
\\
u(0,\cdot) = 0  & \text{ in }  \Omega,
\\
u = f  & \text{ on } \Sigma.
\end{cases}
\end{equation}  
We shall establish in Theorem \ref{thm:wellposedness} that this problem has a unique solution  for sufficiently small boundary value $f$.

The main purpose of this paper is to uniquely recover the potentials $\cA(t,x)$ in   $Q$ and $q(t,x,u)$ in  $Q\times \C$ from both full and partial boundary measurements, which we describe in detail in the next two subsections.
\subsection{Full data problem}

We begin this subsection by defining the boundary measurement used in the first main result of this paper. To this end, associated with the IBVP \eqref{eq:ibvp_nonlinear}, we define the  \textit{Dirichlet-to-Neumann} (DN) map $\Lambda_{\mathcal{A},q}$ via the formula
\begin{equation}
\label{eq:def_DN_map}
\Lambda_{\mathcal{A},q} (f):= (\p_\nu +\mathrm{i}(\mathcal A(t,x)\cdot \nu))u |_{\Sigma},
\end{equation}
where $u$ is the solution to \eqref{eq:ibvp_nonlinear} with sufficiently small Dirichlet data $f$.  In view of Theorem \ref{thm:wellposedness},  $\Lambda_{\mathcal{A},q}$ is well-defined for such $f$.

The first inverse problem we study in this paper is whether $\Lambda_{\mathcal{A},q}$ determines the linear magnetic potential $\cA(t,x)$ and the nonlinear electric potential $q(t,x,u)$ uniquely. Let us observe that the coefficient  $q_{1}$ in the expansion \eqref{eq:expansion_q} corresponds to a linear  electric potential. After applying the first order linearization, the unique recovery of $\mathcal{A}$ and $q_1$ reduces to an inverse problem of determining the magnetic and electric potentials appearing in a linear dynamical Schr\"odinger equation. Therefore, the gauge invariance associated with the DN map of the linear  equation implies that the unique   recovery of $\mathcal{A}$ is possible only when its divergence $\nabla \cdot \mathcal{A}$ is known, see \cite{Kian_Soccorsi,Lai_Uhlmann_Yan,NakSunUlm_1995}.

We are now ready to state the first main result of this paper. 
\begin{thm}
\label{thm:main_result}
Let $\Omega \subseteq \R^n$, $n\ge 2$, be a bounded domain with smooth boundary $\p \Omega$. For any $0<T<\infty$, let $Q:=(0,T) \times \Omega$ and $\Sigma := (0,T) \times \p \Omega$. Suppose that the linear magnetic potentials $\mathcal{A}^{(1)}, \mathcal{A}^{(2)}: \overline{Q}  \to \mathbb{R}^{n}$, and that the nonlinear electric potentials $q^{(1)}, q^{(2)}: \overline{Q} \times \mathbb{C} \to \mathbb{C}$ satisfy Assumptions \ref{assump2}--\ref{assump3}. Let
$\Lambda_{\mathcal{A}^{(i)},q^{(i)}}$, $i=1,2$, denote the DN map \eqref{eq:def_DN_map} when $(\mathcal{A},q)=(\mathcal{A}^{(i)},q^{(i)})$. Then $\Lambda_{\mathcal{A}^{(1)},q^{(1)}}=\Lambda_{\mathcal{A}^{(2)},q^{(2)}}$ implies that $\mathcal{A}^{(1)}=\mathcal{A}^{(2)}$ in $\overline{Q}$ and $q^{(1)}=q^{(2)}$ in $\overline{Q} \times \mathbb{C}$, provided  that  $\nabla\cdot \mathcal{A}^{(1)}= \nabla\cdot \mathcal{A}^{(2)}$ in $Q$, $\mathcal{A}^{(1)}=\mathcal{A}^{(2)}$ on  $\Sigma$, and  $ q^{(1)}=q^{(2)}$ on  $\Sigma\times\mathbb{C}$. 
\end{thm}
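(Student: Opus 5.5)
I will use the higher-order linearization method: expand the DN map in small boundary data and recover the coefficients $q_k$ inductively in $k$. Take boundary data $f=\sum_{j=1}^m \varepsilon_j f_j$ with small parameters $\varepsilon=(\varepsilon_1,\dots,\varepsilon_m)$ and $f_j$ smooth and compatible with the zero initial condition. By Theorem \ref{thm:wellposedness}, the solution $u_\varepsilon$ exists, is unique, and depends holomorphically (or at least smoothly) on $\varepsilon$ in a space such as $C^1([0,T];H^{s}(\Omega))$ with $s$ large.

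\emph{First order.} Differentiating \eqref{eq:ibvp_nonlinear} in $\varepsilon_j$ at $\varepsilon=0$ gives $v_j=\partial_{\varepsilon_j}u_\varepsilon|_{\varepsilon=0}$. It solves the linear problem $\mathcal{L}_{\mathcal{A}}v_j+q_1v_j=0$ with $v_j(0)=0$ and $v_j|_\Sigma=f_j$. Hence $\Lambda_{\mathcal{A}^{(1)},q^{(1)}}=\Lambda_{\mathcal{A}^{(2)},q^{(2)}}$ yields equality of the linear DN maps for $(\mathcal{A}^{(i)},q^{(i)}_1)$.

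To conclude $\mathcal{A}^{(1)}=\mathcal{A}^{(2)}$ and $q^{(1)}_1=q^{(2)}_1$, I will either invoke the known linear result (Kian--Soccorsi, or the time-dependent results cited) or reprove it. The reproof takes the difference of the two solutions, uses the adjoint problem (final condition $w(T)=0$), and derives the integral identity
\[
\int_Q \Big(2\mathrm{i}(\mathcal{A}^{(1)}-\mathcal{A}^{(2)})\cdot\nabla v^{(1)}\,\overline{w}+(\text{zeroth order terms})\,v^{(1)}\overline{w}\Big)\,dx\,dt=0 .
\]
Into this identity I insert geometric optics solutions $v=e^{\mathrm{i}(x\cdot\xi-|\xi|^2t)/h\ldots}(a+hr)$, whose amplitudes satisfy a transport equation carrying $\exp(\mathrm{i}\int\omega\cdot\mathcal{A})$. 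The hypotheses $\mathcal{A}^{(1)}=\mathcal{A}^{(2)}$ on $\Sigma$ and $q^{(1)}=q^{(2)}$ on $\Sigma\times\C$ allow extending the differences by zero outside $Q$.

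\begin{itemize}
\item The leading order gives the light-ray transform of $\mathcal{A}^{(1)}-\mathcal{A}^{(2)}$, and hence that its $t$-slice curl vanishes.
\item Together with $\nabla\cdot\mathcal{A}^{(1)}=\nabla\cdot\mathcal{A}^{(2)}$ and equality on the boundary, this gives $\mathcal{A}^{(1)}=\mathcal{A}^{(2)}$: the difference is a gradient $\nabla\varphi$ with $\Delta\varphi=0$ and $\varphi$ constant on the boundary.
\item The next order then yields the Fourier transform of $q^{(1)}_1-q^{(2)}_1$, so these coincide.
\end{itemize}

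\emph{Induction.} Assume $q^{(1)}_j=q^{(2)}_j$ for $j<k$ and $\mathcal{A}$ is known. Apply $\partial_{\varepsilon_1}\cdots\partial_{\varepsilon_k}$ at $0$ with $m=k$. The $k$-th order linearization $w^{(i)}$ solves
\[
(\mathcal{L}_{\mathcal{A}}+q_1)w^{(i)}=-q^{(i)}_k v_1\cdots v_k+F_k,
\]
with zero initial and boundary data. Here $F_k$ depends only on the lower-order $q_j$ and lower-order linearizations, so it coincides for $i=1,2$ by the induction hypothesis. Equality of the DN maps gives $\partial_\nu w^{(1)}=\partial_\nu w^{(2)}$ on $\Sigma$, the magnetic normal term being equal since $\mathcal{A}$ is known. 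Integrating against a solution $v_0$ of the adjoint linear problem with $v_0(T)=0$ gives
\[
\int_Q (q^{(1)}_k-q^{(2)}_k)\,v_1\cdots v_k\,\overline{v_0}\,dx\,dt=0 .
\]
Choose $v_1,v_2$ and $v_0$ to be geometric optics (or exponentially growing CGO-type) solutions with phases whose product gives $e^{\mathrm{i}(\tau t+x\cdot\zeta)}$ for arbitrary $(\tau,\zeta)$. Choose $v_3,\dots,v_k$ with phases pairing off, or take them nonvanishing. Letting $h\to0$, the magnetic amplitude factors cancel, and the Fourier transform of $(q^{(1)}_k-q^{(2)}_k)\,a$ vanishes. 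Thus $q^{(1)}_k=q^{(2)}_k$ for all $k$, and \eqref{eq:expansion_q} gives $q^{(1)}=q^{(2)}$.

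\emph{Main obstacle.} The hardest step is constructing geometric optics solutions for the magnetic Schr\"odinger operator with zero initial data and control of the remainder in $L^\infty$, or $L^{2k}$, norms strong enough for $k$-fold products. I would use solutions of the form $e^{\mathrm{i}\phi/h}a$ with $\phi=x\cdot\omega-|\omega|^2t$ (appropriately scaled) and cut-offs in time. The remainder would be solved via the energy estimates from Theorem \ref{thm:wellposedness} in high Sobolev norms, with Sobolev embedding giving $\|r\|_{L^\infty}=O(h)$.

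The second delicate point is arranging the phases in the product $v_1\cdots v_k\overline{v_0}$ so that the oscillations cancel to a prescribed Fourier mode. The magnetic phase factors $e^{\mathrm{i}\int\mathcal{A}}$ must also cancel, which they do since $\mathcal{A}$ is already identified and the real directions pair.
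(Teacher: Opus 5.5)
Your first-order step is the paper's argument: linearization, an adjoint solution vanishing at $t=T$, geometric optics solutions $e^{\mathrm{i}(\lambda x\cdot\alpha-\lambda^2|\alpha|^2t)}(m_0+\cdots)$ with time cut-offs, the ray transform of $\alpha\cdot(\mathcal{A}^{(2)}-\mathcal{A}^{(1)})$ along spatial lines at each fixed $t$ (not a light-ray transform), the gauge argument with the divergence condition, and a Fourier argument for $q_1$. The gap is in the induction step. First, ``pairing off'' the phases of $v_3,\dots,v_k$ is impossible. Every $v_p$ solves the forward equation, so its phase $\lambda x\cdot\alpha_p-\lambda^2|\alpha_p|^2t$ has temporal frequency of one fixed sign. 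Moreover, $\overline{v_0}$ is the only conjugated factor in $\int_Q q_k v_1\cdots v_k\overline{v_0}$, so no two $v_p$ can cancel each other. Cancellation requires the joint conditions $\sum_{p=1}^k\alpha_p=\alpha_0$ and $\sum_{p=1}^k|\alpha_p|^2=|\alpha_0|^2$, with all $k$ factors oscillating; this is exactly what the paper imposes. Complex isotropic phases would pair, but their exponentially weighted remainders are not controlled by the energy estimates of the forward problem; you would need Carleman estimates. Your fallback of taking $v_3,\dots,v_k$ ``nonvanishing'' is not justified as written. You need linearized solutions (which vanish at $t=0$) whose product is nonzero off a null set, for example geometric optics solutions at a fixed large $\lambda$ with $\|R_\lambda\|_{L^\infty(Q)}$ small via $H^{m+1}(Q)\hookrightarrow L^\infty(Q)$.

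Second, the magnetic factors do not cancel. Already for $k=2$ the phase conditions force $\alpha_1\perp\alpha_2$ and $\alpha_0=\alpha_1+\alpha_2$, which are three pairwise non-parallel directions. Write $\Theta_\alpha$ for the magnetic phase in the amplitude for direction $\alpha$ and $\widetilde m_0$ for the leading amplitude of $v_0$. Then $m_0^1m_0^2\overline{\widetilde m_0}=\chi^3\eta_1\eta_2\overline{\eta_0}\,W$ with $W=e^{\mathrm{i}(\Theta_{\alpha_1}+\Theta_{\alpha_2}-\Theta_{\alpha_0})}$. For generic $\mathcal{A}$, $W\neq1$, and it cannot be absorbed into profiles $\eta_j$ that are constant along $\alpha_j$-lines. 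Knowing $\mathcal{A}$ only makes $W$ a known unimodular weight.

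To close the argument, fix the directions so that $W$ is fixed, and put the frequency into the transverse profiles: $\eta_1=e^{-\mathrm{i}x\cdot\xi_1}$ and $\eta_2=e^{-\mathrm{i}x\cdot\xi_2}$ with $\xi_j\perp\alpha_j$, and $\eta_0=1$. Since $\alpha_1^\perp+\alpha_2^\perp=\mathbb{R}^n$, varying $\chi$ gives $\int_\Omega q_k(t,x)W(t,x)e^{-\mathrm{i}x\cdot\xi}\,dx=0$ for all $t\in(0,T)$ and $\xi\in\mathbb{R}^n$. Hence $q_k=0$ because $|W|=1$, and no time frequency $\tau$ is needed. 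General $k$ works the same way, using two non-parallel directions among $\alpha_1,\dots,\alpha_k$.

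Alternatively, let only $v_1$ and $v_0$ oscillate, in the same direction, so that the magnetic factors genuinely cancel, and take $v_2,\dots,v_k$ to be fixed nonvanishing solutions as above. Be aware that the paper's own step, which asserts that one can choose $m_0^1m_0^2\overline{m_0^3}=\chi^3e^{-\mathrm{i}x\cdot\xi}$, also passes over the weight $W$. Write the weighted version rather than copying that line.
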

\subsection{Partial data problem}
\label{subsec:partial_data_problem}

We now turn our attention to the second inverse problem studied in this paper. Specifically, assuming that the  potentials $\cA(t,x)$ and $q(t,x,u)$ are known near the lateral boundary $\Sigma$, our goal is to prove that measurements made on arbitrarily small open subsets of $\Sigma$ determine $\cA$ and $q$ uniquely.  

The study of partial data inverse problems in this setting was initiated in \cite{Ammari_Uhlmann}, which established a uniqueness result for the scalar potential appearing in the linear Schr\"odinger operator $-\Delta +q$. We refer readers to \cite{Bellassoued_Fraj,Fathallah,Ben_Joud,Krup_Lass_Uhl_magSchr_euc,Krupchyk_Uhlmann_stability,Liu_Salem,Yang,Zhao_Yuan} and the references therein for related results for various linear operators, as well as \cite{Lai_Lu_Zhou,Lai_Uhlmann_Yan} for nonlinear operators with the same partial data setting. 

We next   describe the partial boundary measurement used to establish our second main result of this paper. Assume that $\cO \subset \Omega$ is an open neighborhood of $\p \Omega$. Let $\Gamma \subseteq \p \Omega$ be an arbitrary nonempty open set. In what follows, we  denote $\Sigma^\sharp := (0,T)\times \Gamma$ and  define a \textit{partial DN map} by the formula 
\begin{equation}
\label{eq:revise_def_DN_map}
\Lambda_{\mathcal{A},q}^\sharp (f):= (\p_\nu +\mathrm{i}(\mathcal{A}(t,x)\cdot \nu))u |_{\Sigma^{\sharp}}.
\end{equation}
The inverse problem we are interested in is whether $\Lambda_{\mathcal{A},q}^\sharp$ uniquely determines $\cA(t,x)$ and  $q(t,x,u)$, provided that $\cA$ is known in $(0,T) \times \cO$, and $q$ is  known in $(0,T) \times \cO\times\C$. Similar to the full data case, it is necessary to assume that $\nabla \cdot \mathcal{A}$ is known in order to obtain the global uniqueness of $\mathcal{A}$. 

Our second main result of this paper is as follows.

\begin{thm}
\label{thm:main_result_partial_data}
Let $\Omega \subseteq \R^n$, $n\ge 2$, be a bounded domain with smooth boundary $\p \Omega$. For any $0<T<\infty$, let $Q:=(0,T) \times \Omega$ and $\Sigma := (0,T)\times \p \Omega$. Assume that $\cO \subset \Omega$ is an arbitrary open neighborhood of $\p \Omega$. Suppose that the linear magnetic potentials $\mathcal{A}^{(1)}, \mathcal{A}^{(2)}: \overline{Q}  \to \mathbb{R}^n$ and nonlinear electric potentials $q^{(1)}, q^{(2)}: \overline{Q} \times \mathbb{C} \to \mathbb{C}$ satisfy Assumptions \ref{assump2}--\ref{assump3}. Let
$\Lambda^\sharp_{\mathcal{A}^{(i)},q^{(i)}}$, $i=1,2$, denote the partial  DN map \eqref{eq:revise_def_DN_map} when $(\mathcal{A},q)=(\mathcal{A}^{(i)},q^{(i)})$.  Furthermore, assume that $\nabla\cdot \mathcal{A}^{(1)}= \nabla\cdot \mathcal{A}^{(2)}$  in  $Q$, and $\mathcal{A}^{(1)} = \mathcal{A}^{(2)}$ in $(0,T)\times \cO$ and $q^{(1)} = q^{(2)}$ in $(0,T)\times \cO\times \mathbb{C}$. Then  $\Lambda_{\mathcal{A}^{(1)},q^{(1)}}^\sharp=\Lambda_{\mathcal{A}^{(2)},q^{(2)}}^\sharp$  implies that $\mathcal{A}^{(1)}=\mathcal{A}^{(2)}$ in  $\overline{Q}$ and  $q^{(1)}=q^{(2)}$ in  $\overline{Q} \times \mathbb{C}$.
\end{thm}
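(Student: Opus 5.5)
We plan to reduce the partial data problem to the full data setting by higher order linearization combined with a Runge-type/unique continuation argument that exploits the fact that the coefficients coincide on $(0,T)\times\cO$. Throughout, fix small Dirichlet data $f=\sum_{j=1}^m \varepsilon_j f_j$ with $f_j$ supported in $\Sigma^\sharp$, and let $u^{(i)}_\varepsilon$ solve \eqref{eq:ibvp_nonlinear} with $(\cA,q)=(\cA^{(i)},q^{(i)})$; by Theorem \ref{thm:wellposedness} (assumed) these depend holomorphically on $\varepsilon$, so we may differentiate in $\varepsilon$ at $0$.

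\emph{Step 1 (first order linearization).} The derivative $v^{(i)}_j=\p_{\varepsilon_j}u^{(i)}_\varepsilon|_{\varepsilon=0}$ solves the linear magnetic Schrödinger equation $\mathrm{i}\p_t v+\Delta v+2\mathrm{i}\cA^{(i)}\cdot\nabla v+\mathrm{i}(\nabla\cdot\cA^{(i)})v-|\cA^{(i)}|^2v+q^{(i)}_1 v=0$ with zero initial data and boundary data $f_j$, and the partial DN maps of these linear problems agree on $\Sigma^\sharp$. Since the coefficients agree in $(0,T)\times\cO$, we set $w=v^{(1)}-v^{(2)}$; it vanishes on $\Sigma$, has zero Neumann (magnetic) trace on $\Sigma^\sharp$, and solves the common operator in $(0,T)\times\cO$. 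Unique continuation for the Schrödinger operator from the lateral boundary piece $\Sigma^\sharp$ (Holmgren/Tataru type, valid for smooth time-dependent coefficients with the principal part $\mathrm{i}\p_t+\Delta$) then gives $w=0$ in $(0,T)\times\cO'$ for a connected component, and in particular on $(0,T)\times\p\Omega$ both traces coincide. Hence the full linear DN maps agree (more precisely, via Green's formula the integral identity $\int_Q \big(2\mathrm{i}(\cA^{(1)}-\cA^{(2)})\cdot\nabla v^{(1)}+(\ldots)v^{(1)}\big)\overline{\tilde v}\,dx\,dt=0$ holds for all solutions $v^{(1)}$ with data on $\Sigma^\sharp$ and adjoint solutions $\tilde v$). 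A Runge approximation argument, using that the operators coincide in $\cO$, extends this identity to all solutions in $\Omega\setminus\overline{\cO}$ arising from arbitrary boundary data; then the full data linear result (as in the proof of Theorem \ref{thm:main_result}, with geometric optics solutions and the gauge condition $\nabla\cdot\cA^{(1)}=\nabla\cdot\cA^{(2)}$) yields $\cA^{(1)}=\cA^{(2)}$ and $q^{(1)}_1=q^{(2)}_1$.

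\emph{Step 2 (induction on $k$).} Assume $q^{(1)}_l=q^{(2)}_l$ for $l<k$. Differentiating $k$ times in $\varepsilon_1,\dots,\varepsilon_k$ at $0$, the $k$-th derivatives $w^{(i)}$ solve the (now common) linear equation with source $q_k^{(i)}v_1\cdots v_k$ plus lower terms that agree by hypothesis and by Step 1. The difference $W$ has zero Cauchy data on $\Sigma^\sharp$ and, since $q^{(1)}_k=q^{(2)}_k$ in $\cO$, solves a homogeneous equation there; unique continuation again kills $W$ near $\Sigma$. Pairing with an adjoint solution $v_0$ gives $\int_Q (q^{(1)}_k-q^{(2)}_k)v_1\cdots v_k\overline{v_0}\,dx\,dt=0$, with the integrand supported in $(0,T)\times(\Omega\setminus\cO)$. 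By Runge density, the $v_j$ may be replaced by arbitrary solutions on a slightly larger domain, and choosing $v_1,v_0$ as geometric optics solutions and $v_2=\dots=v_k$ approximating solutions close to $1$ (or also GO with phases cancelling) yields vanishing of the space-time Fourier transform of $q^{(1)}_k-q^{(2)}_k$, so $q^{(1)}_k=q^{(2)}_k$. Summing the series \eqref{eq:expansion_q} gives $q^{(1)}=q^{(2)}$.

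The main obstacle is the unique continuation/Runge step: the Schrödinger operator is not hypoelliptic in time and unique continuation across the lateral boundary holds only in the $x$-directions, so I would first try the Tataru–Hörmander unique continuation theorem for operators with coefficients analytic in $t$ (this is exactly where the analyticity assumptions enter), and if that fails, use the Carleman-estimate approach of \cite{Krup_Lass_Uhl_magSchr_euc} adapted to $\mathrm{i}\p_t+\Delta$.
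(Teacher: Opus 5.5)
Your core mechanism is the paper's. The difference of the (first or $r$-th order) linearized solutions vanishes on $\Sigma$, has zero normal derivative on $\Sigma^\sharp$, and solves the homogeneous equation in $(0,T)\times\cO$, because the coefficients and the lower-order linearized terms agree there. Unique continuation (the paper's Lemma \ref{UCP}) then kills it in a layer near $\p\Omega$, which returns you to the full-data integral identity.

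Where you diverge is in restricting the Dirichlet inputs $f_j$ to $\Sigma^\sharp$, which forces a Runge step. The statement does not need this. In \eqref{eq:revise_def_DN_map} the input $f$ is an arbitrary small, compatible function on all of $\Sigma$; only the Neumann output is restricted. So the geometric optics solution itself is admissible (take $f=v_2|_\Sigma$). The paper therefore finishes without any density argument. It takes a cutoff $\theta$ equal to $0$ near $\p\Omega$ and to $1$ on the support of the source. Then $\tilde v=\theta v$ solves the same equation with the same source, since the commutator lives where $v=0$, and it has zero Cauchy data on all of $\Sigma$. Pairing with any adjoint solution gives \eqref{eq:integral_identity_A0_q1}, and the same is done for $\theta w^{(r)}$ at each order.

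Your route, if completed, would give a stronger theorem, with inputs also confined to $\Sigma^\sharp$. But the Runge step then carries weight you have not supplied. Hahn--Banach plus unique continuation for the backward adjoint problem gives density only in $L^2$ of an interior cylinder. That does not control $\nabla v$ for Schr\"odinger solutions, as plane waves $e^{\mathrm{i}(\lambda x\cdot\alpha-\lambda^2|\alpha|^2t)}$ show. You would have to integrate the $\cA\cdot\nabla v$ term by parts, which is legitimate since $\cA^{(1)}-\cA^{(2)}$ vanishes in $\cO$. You would also have to replace the factors of the multilinear identity one at a time, keeping the others bounded.

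Two points in your plan are wrong as stated.

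First, the analyticity in Assumption \ref{assump2} is only in the variable $z$. It is what produces the expansion \eqref{eq:expansion_q} and makes higher order linearization possible, but $\cA$ and the $q_k$ are merely $C^\infty$ in $t$. So a Tataru/H\"ormander unique continuation theorem for coefficients analytic in $t$ is not available, and this is not where the analyticity assumptions enter. The unique continuation must come from Carleman estimates valid for smooth time-dependent coefficients, which is what the paper imports (Bellassoued--Fraj, Corollary 1).

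Second, taking $v_2=\dots=v_k$ ``close to $1$'' cannot work. An $L^2$-limit of solutions is a distributional solution, and $1$ is not one in general, since $\mathcal{L}_{\cA,q_1}1=\mathrm{i}\nabla\cdot\cA-|\cA|^2+q_1$. You must use your alternative, as the paper does: geometric optics solutions for all $r+1$ factors with $\sum_{p=1}^{r}\alpha_p=\alpha_{r+1}$ and $\sum_{p=1}^{r}|\alpha_p|^2=|\alpha_{r+1}|^2$, so that the phases cancel exactly.
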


The main contributions of this paper are as follows. In Theorem \ref{thm:main_result}, we uniquely recover the linear magnetic potential and general analytic nonlinear electric potential of a semi-linear dynamical Schr\"odinger operator from full boundary measurements. Theorem \ref{thm:main_result_partial_data} provides the corresponding partial data result. 
Furthermore, we establish the well-posedness of the nonlinear IBVP \eqref{eq:ibvp_nonlinear}  when  the nonlinear electric potential $q(t,x,u)$ is complex-valued, due to which the linearized equation becomes non-self-adjoint. 
\subsection{Previous literature}
In the last decade, inverse problems for nonlinear partial differential equations (PDEs) have gained considerable attention. The seminal work \cite{Kurylev_Lassas_Uhlmann} demonstrated  that nonlinearity can be helpful in solving inverse problems for hyperbolic equations. A crucial tool in the study of nonlinear inverse problems is the method of linearization, which was introduced  in \cite{Isakov1993OnUI} to study an  inverse problem for nonlinear parabolic equations.  This approach has been adopted to study inverse problems for various types of nonlinear PDEs. Among the extensive literature on the subject, we refer readers to  \cite{Hintz_Uhlmann_Zhai_4th_order,Hintz_Uhlmann_Zhai,Kian2021NonlinearTerms,Lassas_Liimatainen_Potenciano_Tyni,Lassas_Uhlmann_Wang,Liu_Wang,NakamuraVashisthWatanabe2021Inverse,NakamuraWatanabe2008,NakamuraWatanabeKaltenbacher2009,Uhlmann_Zhang_Quadratic} for results related to hyperbolic equations, \cite{CarsteaFeizmohammadi2021,CarsteaFeizmohammadiKianKrupchykUhlmann2021,Carstea_Nakamura_Vashisth,FeizmohammadiKianOksanen2024,FeizmohammadiLiimatainenLin2023,Kian2023StableNonlinearTerms,KianKrupchykUhlmann2023,Krupchyk_Uhlmann_gradient,Krupchyk_Uhlmann_semilinear_partial,Krupchyk_Uhlmann,Lai_Lu_Zhou,LaiZhou2023PartialData,Lassas_Liimatainen_Lin_Salo,LassasLiimatainenLinSalo2021} for elliptic equations, and \cite{CarsteaGhoshNakamura2025,CarsteaGhoshUhlmann2023,Feizmohammadi_Kian_Uhlmann,Kian_Uhlmann,Lai_Lu_Zhou,Lai_Uhlmann_Yan} for parabolic equations. 

By applying linearization, nonlinear inverse problems can often be reduced to a sequence of inverse problems for linearized equations. A well-known technique to solve inverse problems for linear PDEs is using the geometric optics solution, which was first used   in \cite{Sylvester1987AGU} to address the uniqueness of the conductivity equation. Geometric optics solutions remain one of the principal tools in the study of inverse problems for linear PDEs and have been successfully applied to a wide variety of operators and geometric settings. We refer readers to \cite{Liu_Saksala_Yan,Liu_Saksala_Yan_potential,Mishra_Purohit_Vashisth,Sahoo_Vashith} and the references therein for its applications in inverse problems related to time-dependent coefficients of hyperbolic and parabolic equations.

Let us next discuss inverse problems for the  dynamical Schr\"odinger equation. Results in this direction can be divided into two categories: time-independent and time-dependent coefficients. For linear operators without the magnetic potential, uniqueness and stability for the time-independent scalar potential were first established in \cite{BaudouinPuel2002} from a single boundary measurement. Subsequently, stability results were obtained in \cite{Bellassoued,Bellassoued_Choulli_single,Bellassoued_Choulli_DN,Bellassoued_DDS_dynamical}, among others. In the case of time-dependent coefficients, the first result concerning the uniqueness of time-dependent magnetic and electric potentials was obtained in \cite{Eskin_parabolic}. In a certain geometric setting, the author of \cite{Tetlow2022} established the recovery of a time-dependent Hermitian connection and potential appearing in a vector-valued dynamical Schr\"odinger equation. Moreover, stability estimates from full or partial boundary measurements were derived in \cite{BenAicha2017Stability,Choulli_Kian_Soccorsi, Kian_Soccorsi,Kian_Tetlow}. 

Finally, we briefly review inverse problems for nonlinear dynamical Schrödinger equations. The authors of \cite{Lassas_etal_dynamical} established the uniqueness of both time-dependent linear and nonlinear potentials from the source-to-solution map. For power-type nonlinearities, a logarithmic stability estimate from partial boundary measurements was obtained in \cite{Lai_Lu_Zhou}. We also mention the partial data uniqueness result of \cite{Lai_Uhlmann_Yan} for a different class of nonlinearities, together with a corresponding reconstruction result  \cite{KumarNakamuraVashisth2026}. A related uniqueness result  for a fourth-order nonlinear Schr\"odinger equation was recently studied in \cite{MishraPurohitSahoo2026}.
Moreover, let us mention a recent stability result of \cite{Arrepu_Zhou} concerning a time-independent nonlinearity.

\subsection{Ideas of the proof of main results}

The proof of Theorems \ref{thm:main_result} and \ref{thm:main_result_partial_data} contains several crucial components, the first of which is  the construction of geometric optics solutions to linear dynamical Schr\"odinger equations, which is carried out in Section \ref{sec:construction_GO_solution}. Specifically, the solutions are of the form
\[
v_{\lambda}(t,x)= e^{\mathrm{i}(\lambda x\cdot\alpha-\lambda^2\lvert\alpha\rvert^2t)}\bigg(m_{0}(t,x)+\frac{m_{1}(t,x)}{\lambda}+\frac{m_{2}(t,x)}{\lambda^2}+\cdots+\frac{m_{N}(t,x)}{\lambda^N}\bigg)+ R_{\lambda}(t,x),
\]  
where $\lambda\gg 1$ is a large parameter, the vector $\alpha \in \R^n \setminus \{0\}$,  the smooth amplitudes $m_j$, $j=0, \dots, N$, satisfy suitable transport equations, and $R_\lambda$ is a correction term that vanishes as $\lambda \to \infty$. These solutions are substituted into appropriate integral identities to reduce the unique recovery of coefficients to the injectivity of ray transforms or Fourier transforms. Let us highlight that it is sufficient to construct geometric optics solutions whose remainder decays in the space $H^{1}(0,T;L^{2}(\Omega))
\cap
L^{2}(0,T;H^{1}(\Omega))$, as given in Proposition \ref{prop: geom opt sol 1}, to recover the potentials $\mathcal{A}$ and $q_1$. However, the recovery of the nonlinear coefficients $q_k$, $k\ge 2$, requires a stronger decay estimate, namely, $R_\lambda \to 0$  in $H^{m+1}(Q)$ in the limit $\lambda \to \infty$, where $m>0$ is sufficiently large.  This improved estimate is established in Corollary \ref{Corollary: geom opt sol 1}.

Another major step in the proof is applying the technique of higher order linearization.  Since the seminal work \cite{Kurylev_Lassas_Uhlmann}, nonlinearity has been exploited in the study of inverse problems, and higher order linearization  plays a crucial role in it. In the proof of Theorem \ref{thm:main_result}, the linearization procedure is applied repeatedly to recover the unknown coefficients. 

The unique recovery of $\cA$ and $q$ is carried out in several steps. We begin in Subsection \ref{subsec:first_order_linearization} by performing the first order linearization. This yields an integral identity relating the unknown coefficients $\mathcal{A}$ and $q_1$ to the DN map, which is then used to recover these coefficients. Indeed, we observe from \eqref{eq:magnetic_operator} and \eqref{eq:expansion_q} that $\mathcal{A}$ and $q_1$ represent the linear coefficients of the dynamical Schr\"odinger operator. Due to the gauge invariance of the DN map, $\mathcal{A}$ can only be determined up to the gauge transformation given in \eqref{eq:gauge_relation_A0}. Consequently, in order to obtain uniqueness of $\mathcal{A}$, it is necessary to assume that $\nabla\cdot \mathcal{A}^{(1)}= \nabla\cdot \mathcal{A}^{(2)}$  in  $Q$. Afterwards,  in Subsection \ref{sec:second_order_linearization}, we employ the second-order linearization to recover  $q_2$. Finally, in Subsection \ref{subsec:higher_order_linearization}, we combine higher order linearization with an induction argument to uniquely recover all remaining nonlinear coefficients.

On the other hand, the proof of Theorem \ref{thm:main_result_partial_data} primarily relies on a unique continuation property stated in Lemma \ref{UCP}. It allows us to reduce the partial data problem to a corresponding full data one. Once this reduction is complete, the desired uniqueness results follow directly from Theorem \ref{thm:main_result}. 

Let us  conclude this section  by discussing the main ideas to establish the well-posedness of the IBVP \eqref{eq:ibvp_nonlinear}. We begin with the analysis of the corresponding linear dynamical Schr\"odinger equation with a source term and homogeneous initial and boundary conditions, whose well-posedness is established in Proposition \ref{prop:wellposedness_linear_base_step}. We then show in Lemma \ref{Lemma:wellposedness_linear_induction} that solutions inherit higher Sobolev regularity when the magnetic and electric potentials, as well as the source term, possess higher regularity. Subsequently, we utilize the expansion \eqref{eq:expansion_q} to reformulate the nonlinear  IBVP  \eqref{eq:ibvp_nonlinear} as one for a linear equation whose source term includes nonlinearities. Combining the well-posedness theory for the linear equation with a fixed-point argument, we establish the existence, uniqueness, and regularity of solutions to \eqref{eq:ibvp_nonlinear}.

This paper is organized as follows. In Section \ref{sec:notation}, we introduce some notations that will be used throughout this paper  and recall some properties about time-dependent Sobolev spaces. Afterwards, we establish the well-posedness of the IBVP \eqref{eq:ibvp_nonlinear} with small boundary data in Section \ref{sec:well-posedness}. We then move to the construction of the geometric optics solutions in Section \ref{sec:construction_GO_solution}, followed by establishing the full data result, namely,  Theorem \ref{thm:main_result}, in Section \ref{sec:proof_full_data}. Finally, we prove the partial data result stated in Theorem \ref{thm:main_result_partial_data} in Section \ref{sec:proof_partial_data}. 

\subsection*{Acknowledgments}
The authors would like to express their gratitude to  Philipp Zimmermann for helpful discussions.  
M.K. acknowledges the support of the Prime Minister Research Fellowship (PMRF ID: 2902506) from the government of India for his research. The research of B.L. was partially supported by the  Simons Foundation Travel Support for Mathematicians (MPS-TSM-00013766). The work of M.V. was supported by the ARG-MATRICS grant from the
ANRF, Government of India (File No.  ANRF/ARGM/2025/002368/MTR). This research also received partial support under the FIST program of the Department
of Science and Technology, Government of India (Ref. No. SR/FST/MS-I/2018/22(C)). 

\section{Notation and Preliminaries}
\label{sec:notation}
In this short section, we introduce the function spaces that will be used in the subsequent analysis.
For a measure space $(\omega,\mu)$, a Hilbert space   $\left(X, \langle\cdot,\cdot\rangle_{X} \right)$, and  $1\leq p<\infty$, we define the Bochner space $L^{p}(\omega;X)$ by
\[
L^{p}(\omega;X)
:=
\left\{
f:\omega\to X:
f \text{ is strongly measurable and }
\int_{\omega}\|f(x)\|_{X}^{p} d\mu(x)<\infty
\right\},
\]
and it is equipped with the norm
\begin{equation*}
\|f\|_{L^{p}(\omega;X)}
:=
\left(
\int_{\omega}\|f(x)\|_{X}^{p} d\mu(x)
\right)^{1/p}\ \ \mbox{for}\ f\in L^p(\omega;X).
\end{equation*}
In particular, when $p=2$, the space $L^{2}(\omega;X)$ is a Hilbert space with the inner product 
\begin{equation*}
\langle f,g\rangle_{L^{2}(\omega;X)}
:=
\int_{\omega} \langle f(x),g(x)\rangle_{X} d\mu(x),\ \ \mbox{for}\  f,g\in L^{2}(\omega;X).
\end{equation*}
On the other hand, when $p=\infty$, we define the space 
\begin{equation*}
L^{\infty}(\omega;X)
:=
\left\{
f:\omega\to X:
f \text{ is strongly measurable and }
\operatorname*{ess   sup}_{x\in\omega}\|f(x)\|_{X}<\infty
\right\},
\end{equation*}
where
\begin{equation*}
\operatorname*{ess  sup}_{x\in\omega}\|f(x)\|_{X}
:=
\inf
\left\{
M>0:
\mu\left(
\{x\in\omega:\|f(x)\|_{X}>M\}
\right)=0
\right\},
\end{equation*}
and the corresponding norm is given by
\begin{equation*}
\|f\|_{L^{\infty}(\omega;X)}
:=
\operatorname*{ess sup}_{x\in\omega}\|f(x)\|_{X}.
\end{equation*}

We next briefly discuss   some  properties of Sobolev spaces that are required in this paper. Let $\omega\subset \mathbb{R}^{n}$
be an open set with boundary $\partial\omega$. For any integer $m\in\mathbb{N}\cup\{0\}$ and $1\leq p\leq \infty$, we define $W^{m,p}(\omega;X)$ as the space of all functions $u\in L^{p}(\omega;X)$ whose weak derivative $\partial^{\alpha}u \in L^{p}(\omega;X)$ exists for every multi-index $\alpha$ such that $|\alpha|\leq m$. 
The space $W^{m,p}(\omega;X)$ is a Banach space with a norm
\begin{equation*}
\|u\|_{W^{m,p}(\omega;X)}
:=
\sum_{|\alpha|\leq m}
\|\partial^{\alpha}u\|_{L^{p}(\omega;X)}.
\end{equation*}
In particular, when $m=0$, we have $W^{0,p}(\omega;X)=L^{p}(\omega;X)$. 
Also, in the special case $p=2$, we shall write
$H^{m}(\omega;X):=W^{m,2}(\omega;X)$, which is a Hilbert space with an inner product
\begin{equation*}
\langle u,v\rangle_{H^{m}(\omega;X)}
:=
\sum_{|\alpha|\leq m}
\langle \partial^{\alpha}u,\partial^{\alpha}v\rangle_{L^{2}(\omega;X)}.
\end{equation*}
Moreover, we denote by $H_{0}^{m}(\omega;X)$ the closure of $C_{c}^{\infty}(\omega;X)$ in $H^{m}(\omega;X)$, namely, 
\begin{equation}\label{eq:H0m_definition}
H_{0}^{m}(\omega;X)
:=
\overline{C_{c}^{\infty}(\omega;X)}^{ H^{m}(\omega;X)},
\end{equation}
where $C_{c}^{\infty}(\omega;X)$ is the space of compactly supported smooth functions valued in $X$.

If $X= \mathbb{R}^{n}$ or $\mathbb{C}^{n}$,  we simply denote the space $W^{m,p}(\omega;X)$ as $W^{m,p}(\omega)$. Furthermore, if $\partial\omega$ is  smooth and
$m\geq 1$, an equivalent characterization of \eqref{eq:H0m_definition} is given by 
\[
H_{0}^{m}(\omega)
=
\left\{
u\in H^{m}(\omega):
\partial^{\alpha}u|_{\partial\omega}=0
\text{ for all } |\alpha|\leq m-1
\right\},
\]
where the boundary values are understood in the sense of traces, see \cite[Theorem 5.37]{AdamsFournier2003} for more details. 

Let us now recall some properties of Sobolev spaces involving time. For the space-time  $Q=(0,T)\times \Omega$ and any pair of integers $r,s\geq 0$, we define the anisotropic Sobolev space
$H^{r,s}(Q)
:=
H^{r}(0,T;L^{2}(\Omega))
\cap
L^{2}(0,T;H^{s}(\Omega))$, 
equipped with the norm
\begin{equation}\label{eq:anisotropic_sobolev_norm}
\|u\|_{H^{r,s}(Q)}=
\|u\|_{H^{r}(0,T;L^{2}(\Omega))}
+
\|u\|_{L^{2}(0,T;H^{s}(\Omega))}.
\end{equation} 
In particular, when $r=s=m$, by the standard characterization of Sobolev spaces on product domains, we have $H^{m,m}(Q)=H^{m}(Q)$ with equivalent norms, see for instance \cite[Chapter~4, Proposition~2.1]{LionsMagenes1968Vol2}.

Finally, we define anisotropic Sobolev spaces on the lateral boundary $\Sigma=(0,T)\times\p\Omega$ as follows. For any real numbers $r,s\ge 0$, we write $H^{r,s}(\Sigma):= H^{r}\left(0,T;L^{2}(\p\Omega)\right) \cap L^{2}\left(0,T;H^{s}(\p\Omega)\right)$,
equipped with the norm
\[
\|f\|_{H^{r,s}(\Sigma)}:=\|f\|_{H^{r}(0,T;L^{2}(\p\Omega))}+\|f\|_{L^{2}(0,T;H^{s}(\p\Omega))}.
\]

\section{Well-posedness of the Forward Problem}
\label{sec:well-posedness}

The goal of this section is to show that the IBVP 
\eqref{eq:ibvp_nonlinear} is well-posed for small Dirichlet boundary data $f$. Since the  argument is lengthy and technical, we shall divide the proof into several steps.
We first study a nonhomogeneous linear dynamical Schr\"odinger equation with homogeneous initial and boundary conditions, whose well-posedness is established in Proposition \ref{prop:wellposedness_linear_base_step}. Next, under stronger regularity assumptions on the coefficients and the source term, we prove in Lemma \ref{Lemma:wellposedness_linear_induction} that the solution possesses regularity in the higher order Sobolev spaces.
We then incorporate nonhomogeneous boundary conditions and establish a corresponding well-posedness result   in Lemma \ref{Lemma:nonhom_wellposedness_linear}. Finally, by combining these linear well-posedness results with a fixed-point argument, we prove that the nonlinear IBVP \eqref{eq:ibvp_nonlinear} is well-posed for sufficiently small boundary data $f$ in Theorem \ref{thm:wellposedness}.

Our first main result of this section gives us the well-posedness of the linear dynamical Schr\"odinger equation, particularly with complex-valued electric potential $q(t,x)$. In what follows,  the notation $a \lesssim b$ stands for $a\le Cb$ for some implicit constant  $C>0$ that is independent of $a$ and $b$. The dependence of $C$ will be stated whenever relevant.

\begin{prop}
\label{prop:wellposedness_linear_base_step}
Let $\Omega \subseteq \R^n$, $n \ge 2$, be a bounded open set with smooth boundary $\p \Omega$. For any $T>0$, let $Q=(0,T) \times \Omega$ and $\Sigma = (0,T) \times \p \Omega$.
Suppose that $A \in W^{2,\infty}(Q;\mathbb{R}^{n})$, $q \in W^{1,\infty}(Q;\mathbb{C})$, and $F\in H^1(0,T;L^2(\Omega))$ satisfies the initial condition $F(0,\cdot)=0$ in $\Omega$. Then the IBVP 
\begin{equation}
\label{eq:magnetic_schrodinger_ibvp}
\begin{cases}
\mathrm{i}\partial_t u
+\Delta u
+2\mathrm{i}A \cdot\nabla u
+\mathrm{i}\nabla \cdot A  u
-|A|^{2}u
+q u
=F
& \text{ in } Q,
\\
u=0 & \text{ on }\Sigma,
\\
u(0,\cdot)=0 & \text{ in }\Omega, 
\end{cases}
\end{equation}
admits a unique solution $u\in L^{2}\left(0,T;H^{2}(\Omega)\cap H^{1}_{0}(\Omega)\right) \cap H^{1}(0,T;L^{2}(\Omega))$, 
which satisfies the  estimate
\begin{equation}
\label{eq:final_regularity_estimate}
\|u\|_{H^{1,2}(Q)} 
\lesssim
\|F\|_{H^1(0,T;L^2(\Omega))}.
\end{equation}
Here the implicit constant   depends on $\Omega$, 
$\|A\|_{W^{2,\infty}(Q)}$, $\|q\|_{W^{1,\infty}(Q)}$, and $T$, but is independent of $u$. 
\end{prop}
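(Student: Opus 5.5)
We will prove the proposition with a Galerkin scheme and energy estimates. The time derivative of the equation is estimated first, and the $H^2$ spatial regularity is then read off from elliptic regularity. Set $\mathcal{P}(t)u := -\Delta u - 2\mathrm{i}A\cdot\nabla u - \mathrm{i}(\nabla\cdot A)u + |A|^2 u = (-\mathrm{i}\nabla + A)^2u$. This operator is formally self-adjoint on $H^2(\Omega)\cap H^1_0(\Omega)$ for each fixed $t$, because $A$ is real-valued. The equation then reads $\mathrm{i}\partial_t u = \mathcal{P}(t)u - qu + F$.

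\textbf{Step 1 (Galerkin approximation).} Let $\{w_k\}$ be the Dirichlet eigenfunctions of $-\Delta$ on $\Omega$, orthonormal in $L^2$. Look for $u_N(t)=\sum_{k\le N} c_k(t)w_k$ satisfying the projected equation with $u_N(0)=0$. This is a linear ODE system with $W^{1,\infty}$ coefficients, so it has a unique $H^1$-in-time solution.

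\textbf{Step 2 ($L^2$ estimate).} Pair the projected equation with $u_N$ and take imaginary parts. Self-adjointness makes the $\mathcal{P}(t)$ term drop out, which gives
\[
\tfrac12 \tfrac{d}{dt}\|u_N\|_{L^2}^2 \le \|\Im q\|_\infty \|u_N\|^2 + \|F\|\|u_N\|.
\]

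\textbf{Step 3 (estimate for $\partial_t u_N$).} Differentiate the projected equation in $t$ and set $v_N=\partial_t u_N$. Then $v_N$ satisfies
\[
\mathrm{i}\partial_t v_N = \mathcal{P}(t)v_N - qv_N + \partial_t F + \big[(\partial_t\mathcal{P})u_N - (\partial_t q) u_N\big].
\]
Here $\partial_t\mathcal{P}$ is a first-order operator whose coefficients involve $\partial_t A$, $\nabla\partial_t A$ and $A\,\partial_t A$. This is where the hypothesis $A\in W^{2,\infty}$ enters.

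The initial value is $v_N(0) = -\mathrm{i}\,\Pi_N F(0) = 0$, and this is exactly where the compatibility condition $F(0,\cdot)=0$ is used. The $L^2$ energy argument for $v_N$ leaves the cross term $\langle (\partial_t \mathcal{P})u_N, v_N\rangle$, which is bounded by $\|u_N\|_{H^1}\|v_N\|_{L^2}$.

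\textbf{Step 4 (closing the estimate; the main obstacle).} The difficulty is that Step 3 needs $\|u_N\|_{H^1}$, which the $L^2$ estimate of Step 2 does not supply. To get it, use the equation at each fixed $t$:
\[
\langle \mathcal{P}(t)u_N,u_N\rangle = \langle \mathrm{i}v_N + qu_N - F,\ u_N\rangle .
\]
The left side is bounded below by $c\|u_N\|_{H^1}^2 - C\|u_N\|_{L^2}^2$, by G\aa rding's inequality: expand the square and absorb the term $|A|$ via Young's inequality. Hence
\[
\|u_N(t)\|_{H^1}^2 \lesssim \|v_N(t)\|^2+\|u_N(t)\|^2+\|F(t)\|^2 .
\]
Inserting this into Step 3, adding Step 2, and applying Gronwall's inequality gives a bound on $\sup_t\big(\|u_N\|+\|v_N\|\big)$ by $\|F\|_{H^1(0,T;L^2)}$. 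Here $\sup_t\|F(t)\|$ is controlled through the embedding $H^1(0,T)\hookrightarrow C([0,T])$.

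\textbf{Step 5 (passage to the limit and elliptic regularity).} Take weak limits to obtain a solution $u\in H^1(0,T;L^2)\cap L^\infty(0,T;H^1_0)$. For almost every $t$, $u(t)$ solves the Dirichlet problem $\mathcal{P}(t)u = -\mathrm{i}\partial_t u + qu - F\in L^2(\Omega)$, and $\mathcal{P}(t)$ has smooth principal part $-\Delta$ and bounded lower-order coefficients. Elliptic $H^2$ regularity, with constants uniform in $t$, gives
\[
\|u(t)\|_{H^2}\lesssim \|\partial_tu(t)\|+\|u(t)\|+\|F(t)\|.
\]
Integrating in $t$ yields \eqref{eq:final_regularity_estimate}.

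\textbf{Step 6 (uniqueness).} Apply the $L^2$ energy estimate of Step 2 with $F=0$; Gronwall's inequality then forces $u=0$.
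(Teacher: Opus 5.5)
Your proof is correct, and its overall structure matches the paper's. Both use a Galerkin approximation and an $L^2$ bound from the imaginary part of the equation paired with $u_N$. Both bound $\partial_t u_N$ by differentiating the Galerkin system in time and using $\partial_t u_N(0)=0$, which is where $F(0,\cdot)=0$ enters. Both then pass to weak limits, obtain $H^2$ regularity from elliptic theory at a.e.\ fixed $t$, and prove uniqueness from the $L^2$ energy identity.

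The genuine difference is how you control $\|\nabla u_N(t)\|_{L^2(\Omega)}$. Both proofs need this to bound the term $\langle(\partial_t\mathcal{P})u_N,\partial_t u_N\rangle$.

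\textbf{The paper's route.} It derives a second, separate energy inequality for the gradient:
\begin{itemize}
\item it pairs the Galerkin system with $\partial_t u_m$ and takes real parts;
\item it introduces the Hermitian form $b(t,u,v)=a(t,u,v)+\mathrm{i}\langle(\Im q)u,v\rangle_{L^2(\Omega)}$ and differentiates $b(t,u_m,u_m)$ in time;
\item it integrates by parts in time in $\int_0^t\langle F,\partial_t u_m\rangle_{L^2(\Omega)}\,ds$, using $F(0,\cdot)=0$ a second time;
\item it adds this gradient inequality to the $\partial_t u_m$ inequality and applies Gronwall to the sum.
\end{itemize}

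\textbf{Your route.} Once $\partial_t u_N$ is available, the equation at each fixed time is an elliptic identity. Pairing with $u_N$ and using the coercivity of $(-\mathrm{i}\nabla+A)^2$ on $H^1_0(\Omega)$ gives the pointwise bound
\[
\|u_N(t)\|_{H^1(\Omega)}^2\lesssim\|\partial_tu_N(t)\|_{L^2(\Omega)}^2+\|u_N(t)\|_{L^2(\Omega)}^2+\|F(t)\|_{L^2(\Omega)}^2 .
\]
This closes the $\partial_t u_N$ estimate with a single Gronwall argument.

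\textbf{Comparison.} Under the present hypotheses your route is more economical. It dispenses with $b$, its time derivative $b'$, and the time integration by parts, and it shows that the paper's gradient-energy step is not needed. The paper's version is the more classical energy method for Schr\"odinger-type equations, but here it buys nothing extra, since it relies on the same coercivity and the same compatibility condition.

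One detail should be stated explicitly. Differentiating the Galerkin system in $t$ requires the coefficient vector $c$ to lie in $H^2(0,T)$, not just $H^1(0,T)$. This follows from $c'=-\mathrm{i}\bigl(M(t)c+F_N\bigr)\in H^1(0,T)$, because the Galerkin matrix satisfies $M\in W^{1,\infty}(0,T)$ and $F_N\in H^1(0,T)$.
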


\begin{proof}

The proof is completed via several steps. First, we introduce an equivalent weak formulation of the IBVP \eqref{eq:magnetic_schrodinger_ibvp} and apply the Faedo-Galerkin method to this weak formulation to establish the existence and uniqueness of the solution $ u \in H^{1}\left(0,T;L^{2}(\Omega)\right) \cap L^{2}\left(0,T;H_{0}^{1}(\Omega)\right)$ to \eqref{eq:magnetic_schrodinger_ibvp}.
Next, using elliptic regularity, we improve the spatial regularity of the solution and conclude that $u \in L^{2}\left(0,T;H^{2}(\Omega)\cap H_{0}^{1}(\Omega)\right) \cap H^{1}\left(0,T;L^{2}(\Omega)\right).$

For almost every $t\in(0,T)$ and  all  $u(t,\cdot), v \in H_{0}^{1}(\Omega)$, we define the following sesquilinear form
\begin{equation}
\label{eq:sesquilinear_form}
\begin{aligned}
a(t,u,v)
&= 
\left \langle\nabla u(t,\cdot), \nabla v \right\rangle_{L^{2}(\Omega)}
+
\left\langle 
\left(-2\mathrm{i} A(t,\cdot) \cdot \nabla 
-\mathrm{i}\nabla \cdot  A(t,\cdot) + |A(t,\cdot)|^{2}-q(t,\cdot)\right)u(t,\cdot), v
\right\rangle_{L^{2}(\Omega)}
\\
&
:= a_0 (t,u,v)+ a_1 (t,u,v).
\end{aligned}
\end{equation}
Then it follows immediately from the triangle inequality that
\begin{equation}
\label{eq:first_split}
|a(t,u,u)|
\geq |a_{0}(t,u,u)| - |a_{1}(t,u,u)|.
\end{equation}
We also observe that
\[
a_{0}(t,u,u) = \|\nabla u(t,\cdot)\|_{L^{2}(\Omega)}^{2}.
\]

Let us now estimate $a_{1}(t,u,u)$. For the first order term, since $u(t,\cdot) \in H_0^1(\Omega)$, by the Cauchy-Schwarz inequality,  the inequality
\begin{equation}
\label{eq:Linf_L2_product}
\|fg\|_{L^{2}(\Omega)} \leq \|f\|_{L^{\infty}(\Omega)}\|g\|_{L^{2}(\Omega)}, \quad   f\in L^{\infty}(\Omega), \: g\in L^{2}(\Omega), 
\end{equation}
as well as the fact that $2ab\leq \frac{1}{2}a^{2}+2b^{2}$ for all $a,b\in\mathbb{R}$, we have 
\begin{equation}
\label{eq:first_order_CS}
\begin{aligned}
\left|\left\langle -2\mathrm{i} A(t,\cdot)\cdot\nabla u(t,\cdot) , u(t,\cdot) \right\rangle_{L^{2}(\Omega)}\right|
&\leq 2 \|A(t,\cdot)\|_{L^{\infty}(\Omega)}
\|\nabla u(t,\cdot)\|_{L^{2}(\Omega)}\|u(t,\cdot)\|_{L^{2}(\Omega)}
\\
&\leq
\frac{1}{2}\|\nabla u(t,\cdot)\|_{L^{2}(\Omega)}^{2}
+2\|A(t,\cdot)\|_{L^{\infty}(\Omega)}^{2}\|u(t,\cdot)\|_{L^{2}(\Omega)}^{2}.
\end{aligned}
\end{equation}
On the other hand, for the zeroth order terms, it holds that
\begin{equation}
\label{eq:zeroth_order}
\begin{aligned}
&\left|
\left\langle 
\left(-\mathrm{i}\nabla \cdot  A(t,\cdot) + |A(t,\cdot)|^2 - q(t,\cdot)\right)u(t,\cdot),
u(t,\cdot)
\right\rangle_{L^{2}(\Omega)}
\right| 
\\
& \qquad 
\leq
\left\|-\mathrm{i}\nabla \cdot  A(t,\cdot) 
+ |A(t,\cdot)|^2 - q(t,\cdot)\right\|_{L^\infty(\Omega)}
\|u(t,\cdot)\|_{L^2(\Omega)}^{2}.
\end{aligned}
\end{equation}
Thus,   we deduce from \eqref{eq:first_split}--\eqref{eq:zeroth_order} that
\begin{equation}
\label{eq:garding_final}
|a(t,u,u)|
\geq
\frac{1}{2}\|\nabla u(t,\cdot)\|_{L^2(\Omega)}^2 -
\gamma \|u(t,\cdot)\|_{L^2(\Omega)}^2
\end{equation}
for some constant $\gamma>0$ depending on $\|A(t,\cdot)\|_{W^{1,\infty}(\Omega)}$, and $\|q(t,\cdot)\|_{L^{\infty}(\Omega)}$, but  independent of $u$.

We next move on  to establish the existence of solutions to the IBVP \eqref{eq:magnetic_schrodinger_ibvp}, whose weak formulation   is to find a function $u\in H^{1}\left(0,T;L^{2}(\Omega)\right)\cap L^{2}\left(0,T;H_{0}^{1}(\Omega) \right)$ such that  
\begin{equation}
\label{eq:weak_formulation}
\langle \mathrm{i}\partial_{t}u(t,\cdot),v\rangle_{L^{2}(\Omega)}
- a(t,u,v)
= \langle F(t,\cdot),v\rangle_{L^{2}(\Omega)} \quad  \text{ and } \quad u(0,\cdot)=0
\end{equation}
for all 
$v\in H_{0}^{1}(\Omega)$ and almost every $t\in (0,T)$.
We shall adopt the Faedo-Galerkin method to construct such a function, see \cite[Section 3, Theorem 10.1] {lions1972nonhomogeneous1}. This method has been adopted in \cite{Kian_Soccorsi,Lai_Uhlmann_Yan} as well.
We start by choosing an orthonormal basis $\{w_{k}\}_{k=1}^{\infty}$  of $L^{2}(\Omega)$, which is  also
orthogonal in $H_{0}^{1}(\Omega)$. For each $m\in \N$, let us  define the set 
$E_{m}:=\mathrm{span}\{w_{1},\dots,w_{m}\}$.
We seek an approximate solution to \eqref{eq:weak_formulation} of  the form
\begin{equation}
\label{eq:form_um}
u_{m}(t,x)=\sum_{k=1}^{m} g_{k}^{(m)}(t) w_{k}(x), \quad  (t,x)\in    Q.
\end{equation}
Here the coefficient functions $g_k^{(m)}(t)$, $k=1,\dots,m$, are determined in such a way that the corresponding approximate solution $u_m$ satisfies the initial value problem
\begin{equation}
\label{eq:galerkin_system-1}
\begin{cases}
\langle\mathrm{i} \partial_{t}u_{m}(t,\cdot),w_{k}  \rangle_{L^{2}(\Omega)} - a(t,u_{m},w_{k})= \langle F(t,\cdot),w_{k}\rangle_{L^{2}(\Omega)},
\\
u_{m}(0,x)=0.
\end{cases}
\end{equation}

To find $g_j^{(m)}(t)$,  we utilize the orthonormality of $\{w_k\}_{k=1}^m$ in $L^2(\Omega)$, the condition $u_{m}(0,x)=0$ for almost every $x\in \Omega$, along with the sesquilinearity of $a$.  After these steps, we see that  $g^{(m)}_j(t)$ satisfies the following initial value problem: 
\begin{equation}
\label{eq:galerkin_ode_system}
\begin{cases}
\mathrm{i} \partial_{t} g_{j}^{(m)}(t)
-\sum_{k=1}^{m} a(t,w_{k},w_{j}) g_{k}^{(m)}(t)
= \langle F(t,\cdot),w_{j}\rangle_{L^{2}(\Omega)},
\\
g_{j}^{(m)}(0)=0,
\end{cases}
\end{equation}
To simplify notations, let us introduce the vectors  $g^{(m)}(t):=(g_{1}^{(m)}(t),\dots,g_{m}^{(m)}(t))^{T}$ and $F_{m}(t,\cdot):=(f_{1}(t,\cdot),\dots,f_{m}(t,\cdot))^{T}$, where $f_j(t,\cdot) = \n{F(t,\cdot), w_j}_{L^2(\Omega)}$, as well as the matrix  $\mathbb{A}(t,\cdot):=\left[a(t,w_{k},w_{j})\right]_{1\leq j,k\leq m}$. Then \eqref{eq:galerkin_ode_system} can be rewritten as the system of  equations
\begin{equation}
\label{eq:vector_system}
\begin{cases}
\mathrm{i} \partial_{t} g^{(m)}(t)- \mathbb{A}(t,\cdot) g^{(m)}(t) = F_{m}(t,\cdot),
\\
g^{(m)}(0)=0.
\end{cases}
\end{equation}
By integrating the first equation of \eqref{eq:vector_system} over the interval $(0,t)$ and using the initial condition
$g^{(m)}(0)=0$, we obtain  
\begin{equation}
\label{integral_Galerkin_system}
g^{(m)}(t)
= 
-\mathrm{i}\int_{0}^{t} \mathbb{A}(s,\cdot) g^{(m)}(s)ds
-\mathrm{i}\int_{0}^{t} F_{m}(s,\cdot)ds, \quad  t\in(0,T) .
\end{equation}

We now show that for every $F_{m}\in L^{2}(0,T;\mathbb{C}^{m})$ and $\mathbb{A}\in L^{\infty}(0,T;\mathbb{C}^{m^2})$, the equation \eqref{integral_Galerkin_system} admits a unique solution $g^{(m)}\in H^{1}\left(0,T;\mathbb{C}^{m}\right)$.  
To achieve this, we  introduce an operator whose fixed points are precisely the solutions of \eqref{integral_Galerkin_system} and show that this operator is a contraction on a sufficiently small time interval. The existence and uniqueness of a local solution  will follow from the Banach fixed point theorem. This solution is then extended to the entire interval $(0,T)$ by a standard continuation argument. 

Consider the operator $\Phi: H^{1}(0,T;\mathbb{C}^{m})\rightarrow   H^{1}(0,T;\mathbb{C}^{m})$ defined by the formula 
\[
\Phi(\widetilde{g}) (t)=
-\mathrm{i}\int_{0}^{t} \mathbb{A}(s) \widetilde{g}(s)ds
-\mathrm{i}\int_{0}^{t} F_{m}(s)ds, \quad t\in (0,T),
\]
where $\mathbb{A}$ and $F_{m}$ are as in the system   
\eqref{eq:vector_system}. We shall prove that  $\Phi$ is a contraction for some suitable $T_{\star}\in (0,T)$, which will be chosen later. To this end, we observe that for any $\widetilde{g}_1,\widetilde{g}_2\in H^{1}(0,T;\mathbb{C}^{m})$ such that $\widetilde{g}_1(0)=\widetilde{g}_2(0)=0$, the following identity holds for any $t\in (0,T)$:
\begin{equation}
\label{eq:phi_difference}
\Phi(\widetilde{g}_1)(t)-\Phi(\widetilde{g}_2)(t)
= -\mathrm{i}\int_{0}^{t} \mathbb{A}(s)\left(\widetilde{g}_1(s)-\widetilde{g}_2(s)\right)ds 
\end{equation}
Using H\"older's inequality, together with the boundedness of $\mathbb{A}$, and taking the supremum over $t\in(0,t^{\prime})$,  we obtain  the following  inequality:
\begin{equation}
\label{eq:phi-Linf-est}
\left\|\Phi(\widetilde{g}_1)-\Phi(\widetilde{g}_2)\right\|_{L^\infty(0,t^{\prime};\mathbb C^m)}
\le
\sqrt{t^{\prime}} \|\mathbb{A}\|_{L^\infty (0,t^{\prime};\mathbb{C}^{m^2})}
\left\|\widetilde{g}_1-\widetilde{g}_2\right\|_{L^2\left(0,t^{\prime};\mathbb C^m\right)}
\end{equation}
We next estimate the $L^{\infty}$-norm of $\frac{d}{dt}\Phi(\tilde g)=: \Phi'(\tilde g)$. To this end, by differentiating both sides of \eqref{eq:phi_difference} with respect to $t$ and arguing as above, we get
\begin{equation}
\label{eq:phi_time_derivative-Linf-est}
\left\|\Phi'(\widetilde{g}_1)-\Phi'(\widetilde{g}_2) \right\|_{L^{\infty}(0,t^{\prime};\mathbb{C}^{m})}
\leq 
\|\mathbb{A}\|_{L^{\infty}(0,t^{\prime};\mathbb{C}^{m^2})}\sqrt{t^{\prime}}
\left\|\frac{d}{dt}\left(\widetilde{g}_1-\widetilde{g}_2\right)\right\|_{L^{2}(0,t^{\prime};\mathbb{C}^{m})}.
\end{equation}
Furthermore, using the inequality
\[
\|f\|_{L^{2}\left(0,t^{\prime};\mathbb{C}^{m}\right)}
\leq
\sqrt{t^{\prime}} 
\|f\|_{L^{\infty}\left(0,t^{\prime};\mathbb{C}^{m}\right)},\quad  f\in L^{\infty}\left(0,t^{\prime};\mathbb{C}^{m}\right),
\]
along with estimates \eqref{eq:phi-Linf-est} and \eqref{eq:phi_time_derivative-Linf-est}, we obtain 
\begin{align*}
\left\|\Phi(\widetilde{g}_1)-\Phi(\widetilde{g}_2)\right\|_{H^{1}(0,t^{\prime};\mathbb{C}^{m})}
&  \leq
t^{\prime} \|\mathbb{A}\|_{L^{\infty}\left(0,T;\mathbb{C}^{m^2}\right)}
\|\widetilde{g}_1-\widetilde{g}_2\|_{H^{1}(0,t^{\prime};\mathbb{C}^{m})}.
\end{align*}
Choosing $t^{\prime}=T_{\star}>0$ sufficiently small such that $T_{\star}\|\mathbb{A}\|_{L^{\infty}\left(0,T;\mathbb{C}^{m^2}\right)}<1$, we conclude that the map $\Phi: H^{1}(0,T_{\star};\mathbb{C}^{m})\to  H^{1}(0,T_{\star};\mathbb{C}^{m})$ is a contraction.
Hence, by the Banach fixed point theorem, the integral equation \eqref{integral_Galerkin_system} admits a unique solution
$g^{(m)}\in H^{1}(0,T_{\star};\mathbb{C}^{m})$ . 

We next extend the solution $g^{(m)}$ to the interval $\left(\frac{T_{\star}}{2}, \frac{3T_{\star}}{2}\right)$, namely, we establish that the vector-valued function $g^{(m)}\in H^{1}\left( \frac{T_{\star}}{2}, \frac{3T_{\star}}{2};\mathbb{C}^{m} \right)$  satisfies the equation \eqref{integral_Galerkin_system}.
To this end, we define the operator $\widetilde{\Phi} : H^{1} \left(\frac{T_{\star}}{2},\frac{3T_{\star}}{2};\mathbb{C}^{m}\right)
\rightarrow 
H^{1} \left(\frac{T_{\star}}{2},\frac{3T_{\star}}{2};\mathbb{C}^{m}\right)$ by
\[
\widetilde{\widetilde{g}} 
\mapsto 
-\mathrm{i}\int_{\frac{T_{\star}}{2}}^{t} \mathbb{A}(s) \widetilde{\widetilde{g}}(s) ds
-\mathrm{i}\int_{\frac{T_{\star}}{2}}^{t} F_{m}(s) ds 
+ 
g^{(m)}\left(\frac{T_{\star}}{2}\right),
\]
where $\mathbb{A}$ and $F_{m}$ are the same as in the initial value problem \eqref{eq:vector_system}. By similar arguments as above,  $\widetilde{\Phi}$ admits a unique fixed point in 
$H^{1} \left(\frac{T_{\star}}{2},\frac{3T_{\star}}{2};\mathbb{C}^{m}\right)$ that solves the equation \eqref{integral_Galerkin_system} on $\left(\frac{T_{\star}}{2},\frac{3T_{\star}}{2}\right)$. Furthermore, by the uniqueness theorem, this solution must agree with the previously 
constructed $g^{(m)}$ in the overlapping interval 
$\left(\frac{T_{\star}}{2},T_{\star}\right)$. As a consequence, $g^{(m)}$ extends to a solution of 
\eqref{integral_Galerkin_system} on  
$\left(0,\frac{3T_{\star}}{2}\right)$.
Proceeding iteratively, we obtain a unique solution $g^{(m)}\in H^{1}\left(0,T;\mathbb{C}^{m} \right)$ of \eqref{integral_Galerkin_system}. Moreover, in view of \eqref{eq:form_um}, we have $u_{m}\in H^{1}\left(0,T;E_{m} \right)$.

We next  show that $g^{(m)}\in H^{2}\left(0,T;\mathbb{C}^{m}\right)$, which immediately gives us $u_{m}\in H^{2}\left(0,T;E_{m} \right)$. To this end, since $A\in W^{2,\infty}(Q)$, 
$q\in W^{1,\infty}(Q)$, and $F\in H^{1}(0,T;L^{2}(\Omega))$, we get that $\mathbb{A}\in W^{1,\infty}\left(0,T;\mathbb{C}^{m\times m}\right)$ and $F_m\in H^{1}\left(0,T;\mathbb{C}^{m}\right)$ for any $m\in \mathbb{N}$. 
As $g^{(m)}\in H^{1}(0,T;\mathbb{C}^{m})$, it follows immediately that 
$\mathbb{A}g^{(m)}\in H^{1}(0,T;\mathbb{C}^{m})$. In view of \eqref{eq:vector_system}, we have  $\partial_t g^{(m)}=-\mathrm{i}\left(\mathbb{A}g^{(m)}+F_m\right)\in H^{1}(0,T;\mathbb{C}^{m})$. Hence, we get  $g^{(m)}\in H^{2}\left(0,T;\mathbb{C}^{m}\right)$. Due to \eqref{eq:form_um}, we conclude that $u_m\in H^{2}\left(0,T;E_m\right)$.

We now show that $\p_t u_m(0,\cdot)=0$ for later purposes. Using that $g^{(m)}(0,\cdot)=0$ and 
the given initial condition $F(0,\cdot)=0$, which yields that $F_m(0,\cdot)=0$ for any $m\in \mathbb{N}$, we obtain
\[
\partial_t g^{(m)}(0,\cdot)=-\mathrm{i}\left(\mathbb{A}g^{(m)}+F_m\right)(0,\cdot)=0.
\]
This implies that $\partial_t u_m(0,\cdot)=0$ in $\Omega$ for any $m\in\mathbb{N}$.

Next, we aim  to derive the uniform estimate 
\begin{equation}
\label{eq:un_estimate}
\|u_{m}\|_{L^{\infty}(0,T;L^{2}(\Omega))}
+
\|\nabla u_{m}\|_{L^{\infty}(0,T;L^{2}(\Omega))}
+\|\partial_{t}u_{m}\|_{L^{\infty}(0,T;L^{2}(\Omega))}
\lesssim
\|F\|_{H^1(0,T;L^{2}(\Omega))},
\end{equation}
where the implicit constant depends on $T$, $\|A\|_{W^{2,\infty}(Q)}$, and $\|q\|_{W^{1,\infty}(Q)}$, but is independent of $m$. We first bound $\|u_{m}\|_{L^{\infty}(0,T;L^{2}(\Omega))}$ from above, for which we only require the source term $F\in L^{2}(Q)$. Then we derive an upper bound for the other two terms, where both the assumption $F\in H^{1}(0,T;L^{2}(\Omega))$ and the compatibility condition $F(0,\cdot)=0$ will be utilized.

Let us first establish the estimate
\begin{equation}
\label{eq:un_estimate_basic}
\|u_{m}\|_{L^{\infty}(0,T;L^{2}(\Omega))}
\lesssim
\|F\|_{L^{2}(Q)}.
\end{equation}
By multiplying the equation \eqref{eq:galerkin_system-1} by $\overline{g_{k}^{(m)}(t)}$ and summing over $k=1,2,\dots m$, we get
\begin{equation}
\label{eq:galerkin_energy_identity}
\langle \mathrm{i} \partial_{t}u_{m}(t,\cdot),u_{m}(t,\cdot) \rangle_{L^{2}(\Omega)} 
- a(t,u_{m},u_{m})
= \langle F(t,\cdot),u_{m}(t,\cdot)\rangle_{L^{2}(\Omega)}.
\end{equation}
Since $u_m(t,\cdot) \in H_0^1(\Omega)$, and $A$ is real-valued, integration by parts yields that
\[
\Im a(t,u_{m},u_{m}) = \Im \n{(|A(t,\cdot)|^2-q(t,\cdot))u_m(t,\cdot), u_m(t,\cdot)}_{L^2(\Omega)} 
= -\int_{\Omega} \Im  q(t,x) |u_{m}(t,x)|^{2}dx.
\] 
Thus, by taking the imaginary parts of \eqref{eq:galerkin_energy_identity} and applying Young's inequality, we have
\[
\frac{d}{dt}\|u_{m}(t,\cdot)\|_{L^{2}(\Omega)}^{2}  - \left(2\gamma+1\right) \|u_{m}(t,\cdot)\|^{2}_{L^{2}(\Omega)}
\leq  
\|F(t,\cdot)\|^{2}_{L^{2}(\Omega)},
\]
where  $\gamma>0$ is a constant depending on $\|A(t,\cdot)\|_{W^{1,\infty}(\Omega)}$ and $\|q(t,\cdot)\|_{L^{\infty}(\Omega)}$, but is independent of $u_m(t,\cdot)$.
From here, an application of Gronwall's lemma gives us the estimate \eqref{eq:un_estimate_basic}.

We next   derive the estimate \begin{equation}\label{eq:step2}
\|\nabla u_{m}\|_{L^{\infty}(0,T;L^{2}(\Omega))}
+\|\partial_{t}u_{m}\|_{L^{\infty}(0,T;L^{2}(\Omega))}
\;\lesssim\; \|F\|_{H^{1}(0,T;L^{2}(\Omega))} ,
\end{equation}
where the implicit constant depends on $T$, $\|A\|_{W^{2,\infty}(Q)}$, and $\|q\|_{W^{1,\infty}(Q)}$. To that end, multiplying the  equation  \eqref{eq:galerkin_system-1} by $\partial_{t}\overline{g_{k}^{(m)}(t)}$ and summing over $k=1,2,\dots m$, we have
\[
\langle\mathrm{i} \partial_{t}u_{m}(t,\cdot),\partial_{t}u_{m}(t,\cdot)  \rangle_{L^{2}(\Omega)} - a(t,u_{m},\partial_{t}u_{m})= \langle F(t,\cdot),\partial_{t}u_{m}(t,\cdot)\rangle_{L^{2}(\Omega)}.
\]
Then we take the real part of the equation above to obtain
\begin{equation}
\label{eq:time_estimate1}
- \Re a(t,u_{m},\partial_{t}u_{m})= \Re\langle F(t,\cdot),\partial_{t}u_{m}(t,\cdot)\rangle_{L^{2}(\Omega)}.
\end{equation}
Let us define a sesquilinear form  $b(t,u,v)=a(t,u,v)+\mathrm{i}\langle \Im {q}(t,\cdot)u,v\rangle_{L^2(\Omega)}$. In view of the equation \eqref{eq:sesquilinear_form}, we have 
\begin{equation}\label{defn-b}
\begin{aligned}
b(t,u,v)
= &
\langle \nabla u(t,\cdot),\nabla v\rangle_{L^2(\Omega)}
\\&
+
\left\langle
\left(
-2\mathrm{i} A(t,\cdot)\cdot\nabla
-\mathrm{i}\nabla\cdot A(t,\cdot)
+|A(t,\cdot)|^2
-\Re {q}(t,\cdot)
\right)u(t,\cdot),
v
\right\rangle_{L^2(\Omega)} .
\end{aligned}
\end{equation}
We also define the operator $b'$  by the following formula: 
\begin{equation}
\label{eq:b-prime-def}
b'(t,u,v)
:=
\left\langle
\left(
-2\mathrm{i} \partial_{t}A(t,\cdot)\cdot\nabla
-\mathrm{i}\nabla\cdot \partial_{t}A(t,\cdot)
+2A(t,\cdot)\cdot\partial_{t}A(t,\cdot)
-\partial_{t}\Re {q}(t,\cdot)
\right)u(t,\cdot),
v
\right\rangle_{L^2(\Omega)} .
\end{equation}
Since $A$ is real-valued  and $u(t,\cdot),v\in H^{1}_{0}(\Omega)$, it holds that the operator
$-2\mathrm{i}A\cdot\nabla-\mathrm{i}\nabla\cdot A$ is symmetric, and that 
$|A|^{2}-\Re q$ is real-valued. Hence,  $b(t,\cdot,\cdot)$ is Hermitian. By  similar computations as in \eqref{eq:first_order_CS}--\eqref{eq:garding_final}, we get
\begin{equation}
\label{eq:garding_b}
b(t,u,u) \ge \frac{1}{2}\|\nabla u\|_{L^{2}(\Omega)}^{2}
-\gamma_{1}\|u\|_{L^{2}(\Omega)}^{2}
\end{equation}
for some constant $\gamma_{1}>0$ depending on $\|A\|_{W^{1,\infty}(Q)}$ and
$\|\Re q\|_{L^{\infty}(Q)}$. Thus,  we get from \eqref{eq:time_estimate1} and \eqref{defn-b} that 
\[
\Re b(t,u_m,\partial_tu_m)
=
-\Re \langle F,\partial_tu_m\rangle_{L^2(\Omega)}
+
\Re \left(
\mathrm{i}\langle (\Im {q} )u_m,\partial_tu_m\rangle_{L^2(\Omega)}
\right).
\]
Hence, by differentiating $b(t,u_m,u_m)$ with respect to $t$, we have
\[
\frac{d}{dt}b(t,u_m,u_m)
=
b'(t,u_m,u_m)
-
2\Re \langle F,\partial_tu_m\rangle_{L^2(\Omega)}
+
2\Re \left(
\mathrm{i}\langle (\Im {q}) u_m,\partial_tu_m\rangle_{L^2(\Omega)}
\right).
\]
Integrating the previous equation over the interval $(0,t)$ and using $u_m(0, \cdot)=0$, we obtain
\begin{equation}
\label{eq:b-energy-integrated}
\begin{aligned}
b(t,u_m(t, \cdot),u_m(t, \cdot))
= &
\int_0^t b'(s,u_m(s, \cdot),u_m(s, \cdot)) ds
-
2\Re \int_0^t
\langle F(s, \cdot),\partial_tu_m(s, \cdot)\rangle_{L^2(\Omega)} ds
\\
& +
2\Re \int_0^t
\mathrm{i}\langle \Im {q(s, \cdot)} u_m(s, \cdot),\partial_tu_m(s, \cdot)\rangle_{L^2(\Omega)} ds
\\
:= &
I_1+I_2+I_3.
\end{aligned}
\end{equation}

Let us proceed to estimate each term   above. For $I_{1}$, it follows from \eqref{eq:b-prime-def}, the Cauchy-Schwarz inequality, along with the assumptions $A\in W^{2,\infty}(Q)$ and $q\in W^{1,\infty}(Q)$, that $|b'(s,u_{m},u_{m})|\lesssim\|u_{m}(s,\cdot)\|_{H^{1}(\Omega)}^{2}$ for almost every $s\in(0,T)$. Therefore, we obtain
\begin{equation}
\label{eq:I1-bound}
|I_1|
\lesssim
\int_{0}^{t}\|u_{m}(s,\cdot)\|^{2}_{H^{1}(\Omega)} ds
\lesssim
\int_{0}^{t}\|\nabla u_{m}(s,\cdot)\|^{2}_{L^{2}(\Omega)} ds
+\|F\|^{2}_{L^{2}(Q)},
\end{equation}
where we have used the estimate \eqref{eq:un_estimate_basic} to deduce
\[
\int_{0}^{t}\|u_{m}(s,\cdot)\|_{L^{2}(\Omega)}^{2}ds
\le T\|u_{m}\|^{2}_{L^{\infty}(0,T;L^{2}(\Omega))}\lesssim\|F\|^{2}_{L^{2}(Q)}
\]
in the last step. 

For $I_2$, in view of the assumption $F(0,\cdot)=0$ in $\Omega$, we integrate by parts to get
\[
\int_0^t
\langle F(s,\cdot),\partial_tu_m(s,\cdot)\rangle_{L^2(\Omega)} ds
=
\langle F(t,\cdot),u_m(t,\cdot)\rangle_{L^2(\Omega)}
-
\int_0^t
\langle \partial_tF(s,\cdot),u_m(s,\cdot)\rangle_{L^2(\Omega)}  ds.
\]
Using the Cauchy-Schwarz inequality, we obtain 
\begin{equation}
\label{eq:F-ut-by-parts-bound}
\begin{aligned}
&\left|
\int_0^t
\langle F(s,\cdot),\partial_tu_m(s,\cdot)\rangle_{L^2(\Omega)} ds
\right|
\\
& \qquad \leq 
\|F(t,\cdot)\|_{L^2(\Omega)}\|u_m(t,\cdot)\|_{L^2(\Omega)}
+
\int_0^t
\|\partial_tF(s,\cdot)\|_{L^2(\Omega)}\|u_m(s,\cdot)\|_{L^2(\Omega)} ds.     
\end{aligned}
\end{equation}
We next estimate each term on the right-hand side above. To this end, since  $F(0,\cdot)=0$ in $\Omega$, we recall the formula $\frac{d}{dt} \|F(t,\cdot)\|_{L^2(\Omega)}^2 = 2 \Re \n{F(t,\cdot), \p_t F(t,\cdot)}_{L^2(\Omega)}$ and utilize Young's inequality to deduce that 
\begin{align*}
\|F(t,\cdot)\|_{L^2(\Omega)}^2
&=
\|F(t,\cdot)\|_{L^2(\Omega)}^2
-
\|F(0,\cdot)\|_{L^2(\Omega)}^2 
\\
&=
2\Re\int_0^t
\left\langle
F(s,\cdot),\partial_s F(s,\cdot)
\right\rangle_{L^2(\Omega)} ds 
\\
&
\le
\int_0^t
\|F(s,\cdot)\|_{L^2(\Omega)}^2
+
\|\partial_s F(s,\cdot)\|_{L^2(\Omega)}^2
ds 
\\
&\leq
\|F\|_{H^1(0,T;L^2(\Omega))}^2. 
\end{align*}
Turning our attention to the second term, using the Cauchy-Schwarz inequality, in conjunction with the estimate \eqref{eq:un_estimate_basic}, we get
\begin{align*}
\int_0^t
\|\partial_tF(s,\cdot)\|_{L^2(\Omega)}
\|u_m(s,\cdot)\|_{L^2(\Omega)} ds
&\leq
\|\partial_tF\|_{L^2(0,T;L^2(\Omega))}
\|u_m\|_{L^2(0,T;L^2(\Omega))} 
\\
&\lesssim
\|F\|_{H^1(0,T;L^2(\Omega))}
\|u_m\|_{L^\infty(0,T;L^2(\Omega))} 
\\
&\lesssim
\|F\|_{H^1(0,T;L^2(\Omega))}^2.
\end{align*}
From here, we substitute the previous two inequalities into \eqref{eq:F-ut-by-parts-bound} and apply the estimate  \eqref{eq:un_estimate_basic} to conclude that 
\begin{equation}
\label{eq:F-term-bound}
\left|
I_2
\right|
\lesssim\|F\|_{H^1(0,T;L^2(\Omega))}^2.
\end{equation}

To estimate $I_3$, it follows from  Young's inequality and the estimate \eqref{eq:un_estimate_basic} that 
\begin{equation}
\label{eq:Imq-term-bound}
\begin{aligned}
\left|
I_3
\right|
&\lesssim
\int_0^t
\left(
\|u_m(s,\cdot)\|_{L^2(\Omega)}^2
+
\|\partial_tu_m(s,\cdot)\|_{L^2(\Omega)}^2
\right) ds\\
&\lesssim \|F\|_{L^{2}(Q)}^2+ \int_{0}^{t}\|\partial_tu_m(s,\cdot)\|_{L^2(\Omega)}^2 ds.
\end{aligned}
\end{equation}
Therefore, we conclude from  \eqref{eq:un_estimate_basic}, \eqref{eq:garding_b}--\eqref{eq:I1-bound}, \eqref{eq:F-term-bound}, and \eqref{eq:Imq-term-bound} that
\begin{equation}
\label{eq:gradient-un-estimate}
\|\nabla u_{m}(t,\cdot)\|_{L^{2}(\Omega)}^{2}
\lesssim
\|F\|_{H^1(0,T;L^2(\Omega))}^{2}
+
\int_{0}^{t}
\left(
\|\nabla u_{m}(s,\cdot)\|_{L^{2}(\Omega)}^{2}
+
\|\partial_t u_m(s,\cdot)\|_{L^2(\Omega)}^2
\right)ds.
\end{equation}

It remains to derive the corresponding bound for $\|\partial_{t}u_{m}(t,\cdot)\|_{L^{2}(\Omega)}$. To this end, we differentiate the  equation \eqref{eq:galerkin_system-1} with respect to
$t$, followed by multiplying the resulting equation by $\overline{\partial_{t}g_{k}^{(m)}(t)}$ and summing over $k=1,2,\dots m$ to obtain
\begin{equation}
\label{eq:dtv-equation}
\mathrm{i}\langle \partial^2_tu_m(t,\cdot),\partial_{t}u_{m}(t,\cdot)\rangle_{L^{2}(\Omega)}
-
a(t;\partial_{t}u_{m},\partial_{t}u_{m})
=
\langle \partial_tF(t,\cdot),\partial_{t}u_{m}(t,\cdot)\rangle_{L^2(\Omega)}
+
a'(t;u_m,\partial_{t}u_{m}),
\end{equation}
where $a$ is given by \eqref{eq:sesquilinear_form}, and $a'$ is a sesquilinear form defined as follows:
\[
a'(t,u,v)
:=
\left\langle
\left(
-2\mathrm{i}\partial_{t}A(t,\cdot)\cdot\nabla
-\mathrm{i}\nabla\cdot \partial_{t}A(t,\cdot)
+\partial_{t}|A(t,\cdot)|^{2}
-\partial_{t}q(t,\cdot)
\right)u,v
\right\rangle_{L^{2}(\Omega)}.
\]
Furthermore, as $A$ is real-valued, taking the imaginary part  of \eqref{eq:dtv-equation} gives us 
\[
\frac12\frac{d}{dt}\|\partial_{t}u_{m}(t,\cdot)\|_{L^2(\Omega)}^2
+
\int_\Omega \mathrm{Im} {q}(t,x)|\partial_{t}u_{m}(t,x)|^2 dx
=
\operatorname{Im}\langle \partial_tF,\partial_{t}u_{m}(t,\cdot)\rangle
+
\operatorname{Im}a'(t;u_m,\partial_{t}u_{m}).
\] 

Let us observe that $a'$ and the sesquilinear form $b'$ introduced in \eqref{eq:b-prime-def} are related by the equation 
\[
a'(t,u,v)=b'(t,u,v)-\mathrm{i}\left\langle \left(\partial_{t}\Im q(t,\cdot)\right)u,v\right\rangle_{L^{2}(\Omega)}.
\]
Thus,   we obtain $|a'(t;u_m,\partial_{t}u_{m})|
\lesssim \|u_m(t,\cdot)\|_{H^1(\Omega)}
\|\partial_{t}u_{m}(t,\cdot)\|_{L^2(\Omega)}$. Moreover, by the Cauchy-Schwarz inequality and Young's inequality, we get
\[
\frac{d}{dt}\|\partial_{t}u_{m}(t,\cdot)\|_{L^2(\Omega)}^2
\lesssim
\|\partial_{t}u_{m}(t,\cdot)\|_{L^2(\Omega)}^2
+
\|u_m(t,\cdot)\|_{H^1(\Omega)}^2
+
\|\partial_tF(t,\cdot)\|_{L^2(\Omega)}^2.
\]
Integrating in time and using $\partial_{t}u_{m}(0,\cdot)=0$ in $\Omega$, we get
\begin{equation}
\begin{aligned}
\label{eq:w-estimate}
\|\partial_{t}u_{m}(t,\cdot)\|_{L^2(\Omega)}^2
&\lesssim
\int_0^t
\left(
\|\partial_{t}u_{m}(s,\cdot)\|_{L^2(\Omega)}^2
+
\|u_m(s,\cdot)\|_{H^1(\Omega)}^2
+
\|\partial_tF(s,\cdot)\|_{L^2(\Omega)}^2
\right) ds\\
& \lesssim \|F\|_{H^{1}(0,T;L^2(\Omega))}^2+
\int_0^t
\left(
\|\partial_{t}u_{m}(s,\cdot)\|_{L^2(\Omega)}^2
+
\|\nabla u_m(s,\cdot)\|_{L^2(\Omega)}^2
\right) ds.
\end{aligned}
\end{equation}

We now add the estimates \eqref{eq:gradient-un-estimate} and \eqref{eq:w-estimate} to obtain
\begin{align*}
& \|\nabla u_{m}(t,\cdot)\|_{L^{2}(\Omega)}^2+ \|\partial_{t}u_{m}(t,\cdot)\|_{L^2(\Omega)}^2
\\
& \qquad \lesssim \|F\|_{H^{1}(0,T;L^2(\Omega))}^2+
\int_0^t
\left(
\|\partial_{t}u_{m}(s,\cdot)\|_{L^2(\Omega)}^2
+
\|\nabla u_m(s,\cdot)\|_{L^2(\Omega)}^2
\right) ds
\end{align*}
for almost every $t\in(0,T)$. By Gronwall's lemma, we have
\begin{align*}
\|\nabla u_{m}(t,\cdot)\|_{L^{2}(\Omega)}^2+ \|\partial_{t}u_{m}(t,\cdot)\|_{L^2(\Omega)}^2\lesssim \|F\|_{H^{1}(0,T;L^2(\Omega))}^2.
\end{align*}
Therefore, taking the supremum of the inequality above over $t\in(0,T)$ gives us the claimed estimate \eqref{eq:step2}. Finally, by adding the estimates \eqref{eq:un_estimate_basic} and \eqref{eq:step2}, we obtain the uniform estimate \eqref{eq:un_estimate}.

We now return to establishing the existence and uniqueness of solutions to the weak formulation \eqref{eq:weak_formulation}. In view of  the uniform bound  \eqref{eq:un_estimate}, the sequence
$\{u_{m}\}_{m=1}^\infty$ is bounded in $H^{1}(0,T;L^{2}(\Omega))\cap L^{2}(0,T;H_{0}^{1}(\Omega))$. Hence, by the Banach-Alaoglu theorem, there exists a subsequence
$\{u_{m_l}\}_{l=1}^{\infty}$ and a function
$u\in H^{1}(0,T;L^{2}(\Omega))\cap L^{2}(0,T;H_{0}^{1}(\Omega))$
such that $u_{m_l} \to  u$ and $\partial_{t}u_{m_l}  \to \partial_{t}u$ weakly in $L^2(Q)$, and $ 
\nabla u_{m_l} \to  \nabla u$
weakly in $L^2(0,T;(L^2(\Omega))^n)$.

For a fixed integer $m\in\mathbb{N}$, we  let $m_l\geq m$, and suppose that $w\in E_{m}$ is a test function. 
By the equation \eqref{eq:galerkin_system-1}, we obtain
\[
\langle \mathrm{i}\partial_{t} u_{m_l}(t,\cdot),w\rangle_{L^{2}(\Omega)}
- a(t,u_{m_l},w)
= \langle F(t,\cdot),w\rangle_{L^{2}(\Omega)} \quad  \text{ and } \quad u_{m_l}(0,x)=0.
\]
We next multiply the identity above by the function $\overline{\phi(t)}$, where $\phi\in C^\infty([0,T])$ satisfies $\phi(T)=0$, and integrate over the time interval $(0,T)$ to deduce that
\begin{equation}
\label{eq:weak_limit_start}
\int_{0}^{T}
\langle \mathrm{i}\partial_t u_{m_l}(t,\cdot),\phi w\rangle_{L^2(\Omega)} dt
-
\int_{0}^{T}
a(t,u_{m_l},\phi w) dt
=
\int_{0}^{T}
\langle F(t,\cdot),\phi w\rangle_{L^2(\Omega)} dt .
\end{equation}

Let us now analyze  the limit of each term as  $m_l\to\infty$. To this end, since $\partial_{t}u_{m_l} \to  \partial_{t}u$ weakly in $L^2(Q)$ and $\phi w\in L^2(Q)$, we have
\[
\lim_{m_l\to\infty}
\int_{0}^{T}
\langle \mathrm{i}\partial_t u_{m_l}(t,\cdot),\phi w\rangle_{L^2(\Omega)} dt
=
\int_{0}^{T}
\langle \mathrm{i}\partial_t u(t,\cdot),\phi w\rangle_{L^2(\Omega)} dt .
\]
For the term involving the sesquilinear form $a(t,u_{m_l},\phi w)$, since  $\nabla(\phi w) \in L^2(Q) $, $u_{m_l} \to  u$ weakly in $L^2(Q)$, and $\nabla u_{m_l} \to  \nabla u$ weakly in $L^2(0,T;(L^2(\Omega))^n)$, 
we get
\[
\lim_{m_l\to\infty}
\int_0^T a(t,u_{m_l},\phi w) dt
=
\int_0^T a(t,u,\phi w) dt .
\]
Therefore, for all functions $\phi\in C^\infty([0,T])$ satisfying $\phi(T)=0$, it holds that
\begin{equation}
\label{eq:weak_form_intermediate}
\int_{0}^{T}
\langle \mathrm{i}\partial_t u(t,\cdot),\phi w\rangle_{L^2(\Omega)} dt
-
\int_{0}^{T}
a(t,u,\phi w) dt
=
\int_{0}^{T}
\langle F(t,\cdot),\phi w\rangle_{L^2(\Omega)} dt. 
\end{equation}
In particular, this equation holds for all $\phi \in C_{c}^{\infty}(0,T)$. Therefore, by the Fundamental Lemma of the calculus of variations, see \cite[Chapter 3, Lemma 3.31]{AdamsFournier2003}, the identity
\begin{equation}
\label{eq:weak_formulation_Em}
\langle \mathrm{i}\partial_t u(t,\cdot), w\rangle_{L^2(\Omega)}
-
a(t,u, w)
=
\langle F(t,\cdot), w\rangle_{L^2(\Omega)}
\end{equation}
is valid for every $w\in E_{m}$. Moreover, since $\{w_k\}_{k=1}^\infty$ is orthogonal in $H_0^1(\Omega)$, the union 
$\bigcup_{m\in\mathbb{N}}E_{m}$
is dense in $H_{0}^{1}(\Omega)$. 
This allows us to extend  \eqref{eq:weak_formulation_Em} to all  
$w\in H_{0}^{1}(\Omega)$. 

To complete the verification that $u$ satisfies \eqref{eq:weak_formulation},  it remains to show that $u(0,x)=0$ for almost every $x\in\Omega$. To that end, since $\phi(T)=0$, we integrate by parts in the first term of \eqref{eq:weak_form_intermediate} to obtain
\begin{equation}
\label{eq:weak_form_limit}
-\langle\mathrm{i}u(0,\cdot),\phi(0)w\rangle_{L^2(\Omega)}
-
\int_{0}^{T}
\langle\mathrm{i}u(t,\cdot),\partial_{t}(\phi  w)\rangle_{L^2(\Omega)} dt
-
\int_{0}^{T}
a(t,u,\phi w) dt
=
\int_{0}^{T}
\langle F(t,\cdot),\phi w\rangle_{L^2(\Omega)} dt .
\end{equation}

On the other hand, since $u_{m_{l}}(0,\cdot)=0$ in
$\Omega$ for every $l$, we integrate by parts in \eqref{eq:weak_limit_start} and pass  to the limit $m_l\to\infty$ to get
\begin{equation}
\label{eq:weak_form_limit_ibp}
-
\int_{0}^{T}
\langle \mathrm{i}u(t,\cdot),\partial_{t} (\phi w)\rangle_{L^2(\Omega)} dt
-
\int_{0}^{T}
a(t,u,\phi w) dt
=
\int_{0}^{T}
\langle F(t,\cdot),\phi w\rangle_{L^2(\Omega)} dt .
\end{equation}
Thus, choosing $\phi$ such that $\phi(0)\neq0$, we deduce  from  \eqref{eq:weak_form_limit} and \eqref{eq:weak_form_limit_ibp} that $\langle \mathrm{i}u(0,\cdot),w\rangle_{L^{2}(\Omega)}=0$ for all $w\in H_{0}^{1}(\Omega)$. 
Hence,  $u(0,x)=0$ for almost every $x\in \Omega$. Therefore,  for any  $F\in H^{1}(0,T;L^{2}(\Omega))$ such that $F(0,\cdot)=0$ in $\Omega$, there exists a function $u\in H^{1}\left(0,T;L^{2}(\Omega) \right)\cap L^{2}\left(0,T;H^{1}_{0}(\Omega) \right)$ that satisfies the weak formulation \eqref{eq:weak_formulation}. In particular, $u$ is a solution to the IBVP \eqref{eq:magnetic_schrodinger_ibvp}.

We next establish the uniqueness of solutions to the IBVP \eqref{eq:magnetic_schrodinger_ibvp}. To this end, let us assume that $u_{1}$ and $u_{2}$ both satisfy \eqref{eq:weak_formulation} with the same source term $F$. Then for every
$v\in H^{1}_{0}(\Omega)$ and almost every $t\in(0,T)$,  the function $u := u_{1}-u_{2}$ satisfies the initial value problem
\[
\begin{cases}
\n{\mathrm{i}\p_{t}u(t,\cdot),v}_{L^{2}(\Omega)}-a(t,u,v)=0,
\\
u(0,x)=0.  
\end{cases}
\]
Choosing $v=u(t,\cdot)$ gives us
\[
\langle \mathrm{i}\partial_{t}u(t,\cdot),u(t,\cdot)\rangle_{L^{2}(\Omega)}
- a(t,u,u)
= 0 .
\]
Let us recall that the sesquilinear form $a(t,u,u)$ is given by \eqref{eq:sesquilinear_form}, and that $A$ is real-valued. Thus, by taking the imaginary part of the previous equation, we have
\[
\frac{1}{2}\frac{d}{dt}\|u(t,\cdot)\|^{2}_{L^{2}(\Omega)}
=
-\int_{\Omega}\Im q(t,x) |u(t,x)|^{2}dx
\le
\|\Im q\|_{L^{\infty}(Q)} \|u(t,\cdot)\|^{2}_{L^{2}(\Omega)}.
\]
By Gronwall's lemma, and   the initial condition $u(0,x)=0$,  we conclude that $\|u(t,\cdot)\|_{L^{2}(\Omega)}=0$, 
which implies that $u\equiv 0$ almost everywhere in $Q$. Therefore, the solution to the IBVP \eqref{eq:magnetic_schrodinger_ibvp} is unique.

In what follows, we prove that $u$ possesses regularity in the higher order Sobolev spaces, namely, we show that $u\in L^{2}\left(0,T;H^{2}(\Omega)\cap H^{1}_{0}(\Omega)\right)$, and it still satisfies the estimate \eqref{eq:final_regularity_estimate}.
To this end, as $u \in   H^{1}(0,T;L^{2}(\Omega))\cap L^{2}(0,T;H_{0}^{1}(\Omega))$ satisfies the problem \eqref{eq:magnetic_schrodinger_ibvp},   for almost every fixed $t\in(0,T)$, the function $u(t,\cdot)$ satisfies
the elliptic boundary value problem
\begin{equation}
\label{eq:elliptic_problem_for_u}
\begin{cases}
\Delta u(t,\cdot)=\mathfrak{F}(t,\cdot)
& \text{in }\Omega,\\
u(t,\cdot)=0
& \text{on }\partial\Omega,
\end{cases}
\end{equation}
where $\mathfrak{F}:=F-\mathrm{i}\partial_tu-2\mathrm{i}A\cdot\nabla u-\mathrm{i}(\nabla\cdot A)u +|A|^{2}u-qu$.
Let us observe from the regularity of $u$ established previously that $\mathrm{i}\partial_{t}u \in L^2(Q)$ and $\nabla u\in L^{2}(Q)$. Together with $A\in W^{1,\infty}(Q)$ and $q\in L^{\infty}(Q)$, we see that $\mathfrak{F}\in L^2(Q)$. Thus, it follows from elliptic regularity that $u\in L^{2}\left(0,T;H^{2}(\Omega)\cap H^{1}_{0}(\Omega)\right)$,  see \cite[Section 6.3, Theorem 4]{evans_pde}. Furthermore, using the triangle inequality, we get
\begin{equation}
\label{eq:H2_pre_estimate}
\begin{aligned}
\|\mathfrak{F}\|_{L^{2}(Q)}
\leq &
\|2A\|_{L^{\infty}(Q)}
\|\nabla u\|_{L^2(Q)}
+
\|F\|_{L^2(Q)}
\\
&
+
\left\|\mathrm{i}\nabla\cdot A-|A|^{2}+q\right\|_{L^{\infty}(Q)}
\|u\|_{L^2(Q)}
+
\|\partial_{t}u\|_{L^2(Q)}.
\end{aligned}
\end{equation}

Let us estimate the terms appearing on the right-hand side using the
lower semicontinuity of the norm with respect to weak convergence.
Since $u_{m_l} \to  u $, $\partial_{t}u_{m_l} \to  \partial_{t}u $, and $\nabla u_{m_l} \to  \nabla u$ weakly as $m_l \to \infty$, using the inequality $\|u\|_{L^{2}(Q)}\le\sqrt{T}\|u\|_{L^{\infty}(0,T;L^{2}(\Omega))}$ and the uniform estimate \eqref{eq:un_estimate}, we deduce that
\begin{equation}
\label{eq:u-limit-estimate}
\|u\|_{L^{2}(Q)}
+
\|\nabla u\|_{L^{2}(Q)}
+
\|\partial_{t}u\|_{L^{2}(Q)}
\lesssim
\|F\|_{H^{1}(0,T;L^{2}(\Omega))}.
\end{equation}
Therefore, we conclude from \eqref{eq:H2_pre_estimate} and \eqref{eq:u-limit-estimate} that
\begin{equation}
\label{eq:H2_regularity_estimatE_mew}
\|\mathfrak{F}\|_{L^{2}(Q)}
\lesssim
\|F\|_{H^{1}(0,T;L^{2}(\Omega))},
\end{equation}
where the implicit constant depends on $\|A\|_{W^{2,\infty}(Q)}$,
$\|q\|_{W^{1,\infty}(Q)}$ and $T$, but is independent  of $u$ and $F$. 

Since $\p \Omega$ is  smooth, elliptic regularity yields that 
\begin{equation}
\label{eq:frakF_L2_estimate}
\|u(t,\cdot)\|_{H^{2}(\Omega)}
\lesssim
\|\mathfrak{F}(t,\cdot)\|_{L^{2}(\Omega)}
\end{equation}
for almost every $t\in(0,T)$. Here we have absorbed the lower-order term $\|u(t,\cdot)\|_{L^{2}(\Omega)}$ appearing on the right-hand side of the estimate in \cite[Section 6.3, Theorem 4]{evans_pde}. Indeed, testing \eqref{eq:elliptic_problem_for_u} with $u(t,\cdot)\in H^{1}_{0}(\Omega)$ and using the Poincar\'e inequality gives us
\[
\|\nabla u(t,\cdot)\|_{L^{2}(\Omega)}^{2}
=
-\left\langle \mathfrak{F}(t,\cdot),u(t,\cdot)\right\rangle_{L^{2}(\Omega)}
\lesssim
\|\mathfrak{F}(t,\cdot)\|_{L^{2}(\Omega)}\|\nabla u(t,\cdot)\|_{L^{2}(\Omega)}.
\]
Hence, we get
\[
\|u(t,\cdot)\|_{L^{2}(\Omega)}\lesssim\|\nabla u(t,\cdot)\|_{L^{2}(\Omega)}\lesssim\|\mathfrak{F}(t,\cdot)\|_{L^{2}(\Omega)}.
\]

Squaring the estimate \eqref{eq:frakF_L2_estimate} and integrating over $(0,T)$ with respect to $t$ and invoking the estimate \eqref{eq:H2_regularity_estimatE_mew}, we get
\begin{equation}
\label{eq:u_L2H2_estimate}
\|u\|_{L^{2}(0,T;H^{2}(\Omega))}
\lesssim\|\mathfrak{F}\|_{L^{2}(Q)}\lesssim \|F\|_{H^{1}(0,T;L^{2}(\Omega))}.
\end{equation}
Finally, we obtain the estimate \eqref{eq:final_regularity_estimate} by adding \eqref{eq:u-limit-estimate} and \eqref{eq:u_L2H2_estimate}. This completes the proof of Proposition
\ref{prop:wellposedness_linear_base_step}.
\end{proof}

We next state and prove several auxiliary results that will be crucial for the subsequent analysis. 

\begin{lem}
\label{Lemma:Winfty_Hm_product_estimate}
Let $\Omega, Q$, and $\Sigma$ be the same as in Proposition \ref{prop:wellposedness_linear_base_step}. Suppose that
$m\in \mathbb{N}\cup\{0 \}$. 
If $f\in W^{m+1,\infty}(Q)$ and $g\in H^{m+1,m}(Q)$, then $fg\in H^{m+1,m}(Q)$ and
\begin{equation}
\label{eq:Winfty_Hm_product_estimate}
\|fg\|_{H^{m+1,m}(Q)}
\lesssim
\|f\|_{W^{m+1,\infty}(Q)}
\|g\|_{H^{m+1,m}(Q)},
\end{equation}
where the implicit constant depends on $m$ and $n$, but is independent of $f$ and $g$. 
\end{lem}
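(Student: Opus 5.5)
The plan is to estimate separately the two pieces of the norm \eqref{eq:anisotropic_sobolev_norm}, namely $\|fg\|_{H^{m+1}(0,T;L^2(\Omega))}$ and $\|fg\|_{L^2(0,T;H^m(\Omega))}$. In both cases we use the Leibniz rule for weak derivatives and then the elementary bound \eqref{eq:Linf_L2_product}, applied pointwise in $t$ and integrated over $(0,T)$. The Leibniz rule is justified for $f\in W^{m+1,\infty}(Q)$ and $g$ in a Sobolev space. One way is to approximate $g$ by smooth functions, which are dense in $H^{m+1,m}(Q)$ since $\p\Omega$ is smooth. Alternatively, one can use the product rule $W^{k,\infty}\cdot W^{k,2}\subset W^{k,2}$ for the relevant mixed derivatives directly.

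For the time part, fix $0\le j\le m+1$. By Leibniz,
\[
\p_t^j(fg)=\sum_{l=0}^{j}\binom{j}{l}\,\p_t^{l}f\,\p_t^{j-l}g .
\]
Each summand satisfies $\|\p_t^lf\,\p_t^{j-l}g\|_{L^2(Q)}\le \|\p_t^l f\|_{L^\infty(Q)}\|\p_t^{j-l}g\|_{L^2(Q)}$. Here $l\le m+1$, so $\|\p_t^lf\|_{L^\infty(Q)}\le\|f\|_{W^{m+1,\infty}(Q)}$. Also $j-l\le m+1$, so $\|\p_t^{j-l}g\|_{L^2(Q)}\le \|g\|_{H^{m+1}(0,T;L^2(\Omega))}$. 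Summing over $j$ and $l$ bounds $\|fg\|_{H^{m+1}(0,T;L^2(\Omega))}$ by a constant depending only on $m$ times $\|f\|_{W^{m+1,\infty}(Q)}\|g\|_{H^{m+1,m}(Q)}$.

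For the space part, fix a spatial multi-index $\beta$ with $|\beta|\le m$. By Leibniz,
\[
\p_x^\beta(fg)=\sum_{\gamma\le\beta}\binom{\beta}{\gamma}\p_x^\gamma f\,\p_x^{\beta-\gamma}g .
\]
We again bound each summand by $\|\p_x^\gamma f\|_{L^\infty(Q)}\|\p_x^{\beta-\gamma}g\|_{L^2(Q)}\le\|f\|_{W^{m,\infty}(Q)}\|g\|_{L^2(0,T;H^m(\Omega))}$. The number of multi-indices and the binomial coefficients depend only on $m$ and $n$, which gives the claimed dependence of the implicit constant. Adding the two estimates yields \eqref{eq:Winfty_Hm_product_estimate}, and in particular $fg\in H^{m+1,m}(Q)$.

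The only step needing care is the justification of the Leibniz rule for weak derivatives in the Bochner setting. That is, one must confirm that $\p_t^j(fg)$ and $\p_x^\beta(fg)$ exist as elements of $L^2(Q)$. To settle this, we will use the fact that for $g\in H^{m+1}(0,T;L^2(\Omega))$ the time derivatives $\p_t^k g$ are the usual distributional derivatives on $Q$. We then test against $C_c^\infty(Q)$ functions and apply the standard product rule for a $W^{k,\infty}$ function times a $W^{k,2}$ function in one variable direction at a time, for instance via mollification in the relevant variables. No mixed time-space derivatives of $g$ are needed, since $H^{m+1,m}(Q)$ is defined as an intersection. This is why the argument requires only $f\in W^{m+1,\infty}(Q)$.
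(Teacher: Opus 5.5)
Your proposal is correct and follows essentially the same route as the paper. Both split the norm \eqref{eq:anisotropic_sobolev_norm} into its time and space parts, apply the Leibniz rule in $t$ (up to order $m+1$) and in $x$ (up to order $m$) separately, bound each product by the $L^\infty$--$L^2$ inequality \eqref{eq:Linf_L2_product}, and absorb the binomial and multi-index counts into a constant depending only on $m$ and $n$. Your extra remarks on justifying the weak Leibniz rule in the anisotropic setting cover a point the paper leaves implicit.
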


\begin{proof}
By  \eqref{eq:anisotropic_sobolev_norm}, we have
\begin{equation}
\label{eq:product_norm_split}
\|fg\|_{H^{m+1,m}(Q)}=\|fg\|_{H^{m+1}\left(0,T;L^2(\Omega)\right)}+\|fg\|_{L^{2}\left(0,T;H^{m}(\Omega)\right)}.
\end{equation}
Applying the Leibniz rule in the time variable, together with the triangle inequality and the inequality \eqref{eq:Linf_L2_product}, we obtain
\begin{align*}
\|fg\|_{H^{m+1}\left(0,T;L^2(\Omega)\right)}&= \sum_{k\leq m+1}\left\|\p_{t}^{k}(fg)\right\|_{L^2(0,T;L^2(\Omega))}\\
&\leq \sum_{k\leq m+1}\left(\sum_{j\leq k}\binom{k}{j}\left\|\p_{t}^{j}f \p_{t}^{k-j}g\right\|_{L^2(0,T;L^2(\Omega))}\right)\\
&\leq \sum_{k\leq m+1}\left(\sum_{j\leq k}\binom{k}{j}\left\|\p_{t}^{j}f\right\|_{L^{\infty}(Q)} \left\|\p_{t}^{k-j}g\right\|_{L^2(Q)}\right).
\end{align*}
Using the monotonicity of the Sobolev norm, along with the identity $\sum_{\substack{j\leq k}}\binom{k}{j}
= 2^{k}$,
we have
\begin{equation}
\label{eq:time_part_final}
\begin{aligned}
\|fg\|_{H^{m+1}\left(0,T;L^2(\Omega)\right)}&\leq\sum_{{k}\leq m+1}\sum_{j\leq k}\binom{k}{j}\left(\|f\|_{W^{m+1,\infty}(Q)} \|g\|_{H^{m+1}\left(0,T;L^{2}(\Omega)\right)}\right)
\\
& \leq  \left(\sum_{k\leq m+1}2^{k}\right) \left\|f\right\|_{W^{m+1,\infty}(Q)} \|g\|_{H^{m+1,m}(Q)}
\\
& =  \left(2^{m+2}-1\right)\left\|f\right\|_{W^{m+1,\infty}(Q)} \|g\|_{H^{m+1,m}(Q)}.
\end{aligned}
\end{equation}

On the other hand, for the second term of \eqref{eq:product_norm_split}, we apply the Leibniz rule in the spatial variable to get
\begin{equation}
\label{eq:space_part_expansion}
\begin{aligned}
\|fg\|^2_{L^{2}\left(0,T;H^{m}(\Omega)\right)}&= \int_{0}^{T}\left|\sum_{|\alpha|\leq m}\left\|\partial^{\alpha}_{x}\left(fg\right)(t,\cdot)\right\|_{L^{2}(\Omega)} \right|^2 dt\\
&\leq \int_{0}^{T} \left|\sum_{|\alpha|\leq m}\sum_{\beta\leq \alpha}\binom{\alpha}{\beta}\left\|\partial^{\beta}_{x}f(t,\cdot) \p^{\alpha-\beta}_{x}g(t,\cdot)\right\|_{L^{2}(\Omega)} \right|^2 dt\\
&\leq \int_{0}^{T} \left|\sum_{|\alpha|\leq m}\sum_{\beta\leq \alpha}\binom{\alpha}{\beta}\left\|\partial^{\beta}_{x}f(t,\cdot)\right\|_{L^{\infty}(\Omega)}\left\|\p^{\alpha-\beta}_{x}g(t,\cdot)\right\|_{L^{2}(\Omega)} \right|^2 dt.
\end{aligned}
\end{equation}
By the monotonicity of the Sobolev norm and the identity $\sum_{\substack{\beta\leq \alpha}}\binom{\alpha}{\beta}
= 2^{|\alpha|}$,
we deduce that
\begin{equation}
\label{eq:space_part_squared}
\begin{aligned}
\|fg\|^2_{L^{2}\left(0,T;H^{m}(\Omega)\right)}&\leq \int_{0}^{T} \left|\left(\sum_{|\alpha|\leq m}2^{|\alpha|}\right)\left\|f(t,\cdot)\right\|_{W^{m,\infty}(\Omega)} \|g(t,\cdot)\|_{H^{m}(\Omega)}\right|^2 dt\\
& \leq \left[ \left(\sum_{|\alpha|\leq m} 1\right)2^{m}\left\|f\right\|_{W^{m,\infty}(Q)} \|g\|_{L^{2}\left(0,T;H^{m}(\Omega)\right)}\right]^2.
\end{aligned}
\end{equation}
Since the multi-indices $\alpha$ in \eqref{eq:space_part_expansion} are taken in $\mathbb{N}^{n}$, it holds that
\begin{equation}
\label{eq:sum_of_1}
\sum_{|\alpha|\leq m}1
=
\sum_{k=0}^{m}\binom{n+k-1}{k}
=
\binom{n+m}{m}.
\end{equation}
Taking square roots on both sides of \eqref{eq:space_part_squared} and using \eqref{eq:sum_of_1}, we obtain
\begin{equation}
\label{eq:space_part_final}
\|fg\|_{L^{2}\left(0,T;H^{m}(\Omega)\right)}\lesssim
\left\|f\right\|_{W^{m,\infty}(Q)} \|g\|_{L^{2}\left(0,T;H^{m}(\Omega)\right)},
\end{equation}
where the implicit constant depends only on $m$ and $n$. 

Finally, since $\|f\|_{W^{m,\infty}(Q)}\le\|f\|_{W^{m+1,\infty}(Q)}$ and
$\|g\|_{L^{2}(0,T;H^{m}(\Omega))}\le\|g\|_{H^{m+1,m}(Q)}$, we substitute
\eqref{eq:time_part_final} and \eqref{eq:space_part_final} into
\eqref{eq:product_norm_split} to  get the estimate
\eqref{eq:Winfty_Hm_product_estimate}. This completes the proof of Lemma
\ref{Lemma:Winfty_Hm_product_estimate}.
\end{proof}

Our next result shows that the Sobolev space $H^{m}(Q)$ is closed under pointwise multiplication with a suitable regularity assumption. In general, it is not necessarily true that $f,g \in H^m(Q)$ implies $fg \in H^m(Q)$. However, if $m$ is sufficiently large,  $H^{m}(Q)$ becomes a Banach algebra, as stated in the following lemma. This result was originally established in  \cite[Theorem 4.39]{AdamsFournier2003}. 

\begin{lem}
\label{Lemma:banach_algebra_Hm}
Let $\Omega$, $Q$, and $\Sigma$ be the same as in Proposition \ref{prop:wellposedness_linear_base_step}. Suppose
$m\in \mathbb{N}\cup\{0 \}$ is such that $ m>n+1$. Then we have $fg\in H^m(Q)$ for all $f,g\in H^{m}(Q)$, and 
\begin{equation}
\label{eq:banach_algebra_estimate}
\|fg\|_{H^{m}(Q)}
\lesssim
\|f\|_{H^{m}(Q)}\|g\|_{H^{m}(Q)},
\end{equation}
where the implicit constant depends on $m,n,\Omega$ and $T$, but is independent of $f$ and $g$.
\end{lem}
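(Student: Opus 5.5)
The plan is to treat $Q=(0,T)\times\Omega$ as a bounded Lipschitz domain in $\mathbb{R}^{n+1}$ and prove the algebra property with a Leibniz expansion plus Sobolev embeddings. The product $(0,T)\times\Omega$ of a smooth bounded domain with an interval has Lipschitz boundary, so $Q$ satisfies the cone condition. This makes \cite[Theorem 4.39]{AdamsFournier2003} directly applicable with ambient dimension $d=n+1$, under the hypothesis $2m>d$. Since $m>n+1$ certainly gives $2m>n+1$, the statement follows at once from that theorem. For completeness I would nonetheless sketch the self-contained argument, which is the content of that theorem.

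\textbf{Step 1: expand a derivative of the product.} Fix a multi-index $\gamma\in\mathbb{N}^{n+1}$ with $|\gamma|\le m$. By the Leibniz rule,
\[
\partial^\gamma(fg)=\sum_{\beta\le\gamma}\binom{\gamma}{\beta}\,\partial^\beta f\,\partial^{\gamma-\beta}g .
\]
This rule is justified first for $f,g\in C^\infty(\overline{Q})$, which are dense in $H^m(Q)$ for a Lipschitz domain. The general case then follows by approximation once the estimate is established.

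\textbf{Step 2: bound each term $\|\partial^\beta f\,\partial^{\gamma-\beta}g\|_{L^2(Q)}$.} Write $j=|\beta|$ and $k=|\gamma-\beta|$, so $j+k\le m$. Then $\partial^\beta f\in H^{m-j}(Q)$ and $\partial^{\gamma-\beta}g\in H^{m-k}(Q)$. I split into cases.

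\emph{Case $j<m-d/2$.} Here $m-j>d/2$, so the embedding $H^{m-j}(Q)\hookrightarrow L^\infty(Q)$ gives
\[
\|\partial^\beta f\,\partial^{\gamma-\beta}g\|_{L^2(Q)}\le\|\partial^\beta f\|_{L^\infty(Q)}\|g\|_{H^m(Q)} .
\]
The case $k<m-d/2$ is handled symmetrically.

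\emph{Case $j,k\ge m-d/2$.} Now choose exponents $p,q$ with $1/p+1/q=1/2$ and use the embeddings $H^{m-j}\hookrightarrow L^p$ and $H^{m-k}\hookrightarrow L^q$. These embeddings require
\[
\frac1p\ge\frac12-\frac{m-j}{d}
\quad\text{and}\quad
\frac1q\ge\frac12-\frac{m-k}{d},
\]
with strict inequality whenever the right-hand side equals $0$. Adding the two conditions, a valid choice exists as long as
\[
\frac12\ge 1-\frac{2m-j-k}{d} .
\]
Since $j+k\le m$, it suffices that $\frac12>1-\frac{m}{d}$, which holds because $m>d/2$. Hölder's inequality then bounds the term by $\|f\|_{H^m(Q)}\|g\|_{H^m(Q)}$.

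\textbf{Step 3: sum up.} Summing over $\beta\le\gamma$ and $|\gamma|\le m$ yields the estimate \eqref{eq:banach_algebra_estimate}. The implicit constant depends on $m$, $n$, and the embedding constants of $Q$, hence on $\Omega$ and $T$.

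\textbf{Main obstacle.} The only delicate point is the borderline exponent bookkeeping in the second case, in particular excluding $p=\infty$ at the critical index. The cleanest way to avoid this fight is simply to cite \cite[Theorem 4.39]{AdamsFournier2003} after verifying the cone condition for $Q$ and the inequality $2m>n+1$. The stronger hypothesis $m>n+1$ leaves ample room for this.
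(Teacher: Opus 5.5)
Your proof is correct, but it is organized differently from the paper's. The paper also expands by Leibniz's rule, but it splits only at $|\beta| = m/2$. If $|\beta|\le m/2$ then $m-|\beta|\ge m/2>(n+1)/2$. If $|\beta|>m/2$ then $m-|\alpha-\beta|>m/2>(n+1)/2$. So in every term one factor lies in $L^\infty(Q)$ by $H^{s}(Q)\hookrightarrow L^\infty(Q)$ for $s>(n+1)/2$, and the other factor is simply bounded in $L^2(Q)$.

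This uses the full strength of the hypothesis $m>n+1$, not just $2m>n+1$. In your notation with $d=n+1$, the same observation shows that your third case is empty. The conditions $j,k\ge m-d/2$ and $j+k\le m$ force $2m-d\le m$, i.e.\ $m\le d=n+1$, which contradicts the hypothesis. So the borderline exponent bookkeeping you flag as the main obstacle never arises here. The H\"older/$L^p$ case is exactly what Adams--Fournier need to reach the sharp threshold $2m>d$. Your citation of Theorem 4.39 of \cite{AdamsFournier2003} (after your correct check that $Q$ satisfies the cone condition) and your sketch both reach that threshold.

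Your version gains generality and a bit more rigor. You justify the Leibniz rule on $H^m(Q)$ by density of $C^\infty(\overline{Q})$ and a limiting argument, a step the paper uses without comment. The paper's two-case split is shorter and avoids $L^p$ embeddings entirely, at the cost of the stronger assumption on $m$.
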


\begin{proof}
Applying Leibniz's formula, we obtain
\begin{align*}
\|fg\|_{H^{m}(Q)}
&=
\sum_{|\alpha|\leq m}
\|\partial^{\alpha}(fg)\|_{L^{2}(Q)}
\le 
\sum_{|\alpha|\leq m}
\sum_{\beta\leq \alpha}
\binom{\alpha}{\beta}
\left\|
\partial^{\beta}f \partial^{\alpha-\beta}g
\right\|_{L^{2}(Q)}.
\end{align*}
We split the inner sum into two cases.

First, suppose that $|\beta|\leq \frac{m}{2}$. Then it follows immediately that $ m-|\beta|\geq \frac{m}{2}$. Since $m>n+1$, by the Sobolev embedding theorem, 
it holds that $H^{m-|\beta|}(Q)\hookrightarrow L^{\infty}(Q)$. Thus, we deduce that  
\[
\|\partial^{\beta}f\|_{L^{\infty}(Q)}
\lesssim
\|\partial^{\beta}f\|_{H^{m-|\beta|}(Q)}
\lesssim 
\|f\|_{H^{m}(Q)}.
\]
Consequently, using the inequality \eqref{eq:Linf_L2_product}, we obtain
\[
\left\|
\partial^{\beta}f \partial^{\alpha-\beta}g
\right\|_{L^{2}(Q)}
\leq
\|\partial^{\beta}f\|_{L^{\infty}(Q)}
\|\partial^{\alpha-\beta}g\|_{L^{2}(Q)}
\lesssim
\|f\|_{H^{m}(Q)}
\|g\|_{H^{m}(Q)}.
\]

Next, suppose that $|\beta|>\frac{m}{2}$. Since $\beta\leq \alpha$ and $|\alpha|\leq m$, we have $|\alpha-\beta|<\frac{m}{2}$.
Applying  the Sobolev embedding theorem again,
we get  
\[
\|\partial^{\alpha-\beta}g\|_{L^{\infty}(Q)}
\lesssim
\|\partial^{\alpha-\beta}g\|_{H^{m-|\alpha-\beta|}(Q)}
\lesssim
\|g\|_{H^{m}(Q)}.
\]
Thus, an application of the inequality \eqref{eq:Linf_L2_product} gives us 
\[
\left\|
\partial^{\beta}f \partial^{\alpha-\beta}g
\right\|_{L^{2}(Q)}
\leq
\|\partial^{\beta}f\|_{L^{2}(Q)}
\|\partial^{\alpha-\beta}g\|_{L^{\infty}(Q)}
\lesssim
\|f\|_{H^{m}(Q)}
\|g\|_{H^{m}(Q)}.
\]

Combining the previous two  cases, we have
\[
\left\|
\partial^{\beta}f \partial^{\alpha-\beta}g
\right\|_{L^{2}(Q)}
\lesssim
\|f\|_{H^{m}(Q)}
\|g\|_{H^{m}(Q)}.
\]
Hence, we obtain the estimate \eqref{eq:banach_algebra_estimate}by summing over the $\binom{n+m+1}{m}$ space-time multi-indices $\alpha$ with $|\alpha|\le m$ and using $\sum_{\beta\le\alpha}\binom{\alpha}{\beta}=2^{|\alpha|}\le2^{m}$. This completes the proof of Lemma \ref{Lemma:banach_algebra_Hm}.
\end{proof}

We now return to studying the well-posedness of the linear dynamical Schr\"odinger operator. In the following lemma, we assume that the coefficients $A$ and $q$, along with the source term $F$ in the IBVP \eqref{eq:magnetic_schrodinger_ibvp}, are of higher regularity, and  show that the corresponding solution $u$ also possesses higher regularity. 

\begin{lem}
\label{Lemma:wellposedness_linear_induction}
Let $\Omega, Q$, and $\Sigma$ be the same as in Proposition \ref{prop:wellposedness_linear_base_step}. Let  $A \in W^{ {m+2},\infty}(Q;\R^n)$, $q \in  W^{{m+1},\infty}(Q;\mathbb{C})$, and $F\in H^{m+1,m}(Q)$ for some  $m\in\mathbb{N}\cup\{0\}$.  Assume that $\partial_{t}^{k}F(0,\cdot)=0$ almost everywhere in $\Omega$ and for all $k=0,1,\dots ,m$.
Then the IBVP \eqref{eq:magnetic_schrodinger_ibvp} admits a unique solution 
$u\in L^{2}\left(0,T;H^{m+2}(\Omega)\cap H_{0}^{1}(\Omega)\right) \cap H^{m+1}\left(0,T;L^{2}(\Omega)\right)$, which satisfies the compatibility condition $\partial_{t}^{k}u(0,x)=0$ for almost every $x\in \Omega$ and all $k=0,1,\dots, m$. Furthermore, $u$ satisfies the following estimate:
\[
\|u\|_{H^{m+1,m+2}(Q)}
\lesssim
\|F\|_{H^{m+1,m}(Q)},
\]
where the implicit constant depends on 
$\|A\|_{W^{m+2,\infty}(Q)}$, $\|q\|_{W^{m+1,\infty}(Q)}$, and $T$, but is independent of $u$.
\end{lem}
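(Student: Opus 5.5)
We will argue by induction on $m$. The base case $m=0$ is exactly Proposition \ref{prop:wellposedness_linear_base_step}, since $H^{1,0}(Q)=H^1(0,T;L^2(\Omega))$ and the hypotheses coincide. For the induction step, assume the statement holds for $m-1$ and let $A,q,F$ satisfy the hypotheses at level $m$. By the induction hypothesis there is a unique solution $u\in H^{m,m+1}(Q)$ with $\partial_t^k u(0,\cdot)=0$ for $k\le m-1$. The plan is to study $w:=\partial_t u$, which formally satisfies
\begin{equation*}
\mathrm{i}\partial_t w+\Delta w+2\mathrm{i}A\cdot\nabla w+\mathrm{i}(\nabla\cdot A)w-|A|^2w+qw=G,
\end{equation*}
with
\begin{equation*}
G:=\partial_tF-2\mathrm{i}\partial_tA\cdot\nabla u-\mathrm{i}(\nabla\cdot\partial_tA)u+\partial_t|A|^2u-(\partial_tq)u,
\end{equation*}
and with $w=0$ on $\Sigma$.

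For the initial value of $w$, we evaluate the equation at $t=0$: since $u(0,\cdot)=0$ and $F(0,\cdot)=0$, we get $w(0,\cdot)=-\mathrm{i}F(0,\cdot)=0$. More generally, we will show that the derivatives $\partial_t^kG(0,\cdot)$, $k\le m-1$, vanish. Expanding $\partial_t^{k}G$ by the Leibniz rule gives terms involving $\partial_t^{k+1}F(0,\cdot)=0$ and $\partial_t^j u(0,\cdot)$, $\partial_t^j\nabla u(0,\cdot)$ with $j\le m-1$. These vanish because $u$ satisfies the compatibility conditions at level $m-1$; for the gradients we use that $\partial_t^ju(0,\cdot)=0$ in $\Omega$ implies $\nabla\partial_t^ju(0,\cdot)=0$.

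Next we verify that $G\in H^{m,m-1}(Q)$. Here $\partial_tA\in W^{m+1,\infty}(Q)$ and $\partial_t q\in W^{m,\infty}(Q)$. We will use Lemma \ref{Lemma:Winfty_Hm_product_estimate} with index $m-1$, together with the facts $u,\nabla u\in H^{m,m}(Q)\subset H^{m,m-1}(Q)$, which follow from $u\in H^{m,m+1}(Q)$. One gap must be handled: the lemma bounds only $\|u\|_{H^m(0,T;L^2)}$, while the statement needs $\nabla u\in H^m(0,T;L^2(\Omega))$, i.e.\ mixed regularity. To obtain it we apply the induction hypothesis in its full form to $w$ rather than to $u$ directly. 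Since $G\in H^{m,m-1}(Q)$ needs only $\nabla u\in H^{m-1}(0,T;L^2)$ after one time derivative is spent, this requires a careful count. A cleaner route is interpolation: $u\in H^{m}(0,T;L^2)\cap L^2(0,T;H^{m+1})$ gives $u\in H^{j}(0,T;H^{m+1-j})$, up to the standard intermediate-derivative theorem of Lions--Magenes, so $\nabla u\in H^{m-1}(0,T;L^2)$ with more room. With the coefficient regularity at level $m-1$ ($\partial_tA\in W^{m+1,\infty}$, $\partial_tq\in W^{m,\infty}$), the induction hypothesis applied to $w$ yields a solution $\tilde w\in H^{m,m+1}(Q)$ with $\|\tilde w\|_{H^{m,m+1}}\lesssim\|G\|_{H^{m,m-1}}\lesssim\|F\|_{H^{m+1,m}}$ and $\partial_t^k\tilde w(0,\cdot)=0$ for $k\le m-1$.

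To identify $\tilde w=\partial_t u$, we set $\tilde u(t)=\int_0^t\tilde w$ and check that $\tilde u$ solves \eqref{eq:magnetic_schrodinger_ibvp}. This holds because the equation for $\tilde w$ integrates in time, using $\tilde w(0)=0=F(0)$, to the original equation. Uniqueness from Proposition \ref{prop:wellposedness_linear_base_step} then gives $\tilde u=u$, so $\partial_tu\in H^{m,m+1}$, i.e.\ $u\in H^{m+1}(0,T;L^2)$ with $\partial_t^ku(0)=0$ for $k\le m$. For spatial regularity we use the same elliptic argument as in the proof of Proposition \ref{prop:wellposedness_linear_base_step}: $\Delta u=\mathfrak F$ with $\mathfrak F\in L^2(0,T;H^m(\Omega))$, since $\partial_tu\in L^2(H^{m+1})$, $F\in L^2(H^m)$, and the lower-order terms lie in $L^2(H^m)$ because $u\in L^2(H^{m+1})$. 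Higher elliptic regularity on the smooth domain then gives $u\in L^2(0,T;H^{m+2}(\Omega))$ with the estimate. The main obstacle is the bookkeeping of mixed time-space derivatives in showing $G\in H^{m,m-1}(Q)$, namely the term $\partial_tA\cdot\nabla u$ requiring $\nabla u\in H^m(0,T;L^2)$. If interpolation is insufficient, we would strengthen the inductive statement to include $u\in H^{j}(0,T;H^{m+2-2j})$-type bounds, proved by the same differentiate-and-elliptic scheme.
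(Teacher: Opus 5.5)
The step you flag as the main obstacle is a genuine gap, and it is circular rather than a matter of bookkeeping. To apply the level-$(m-1)$ statement to $w=\partial_t u$ you need $G\in H^{m}(0,T;L^2(\Omega))$. Since $\partial_t^m G$ contains $-2\mathrm{i}\,\partial_tA\cdot\nabla\partial_t^m u$, you therefore need $\nabla\partial_t^m u\in L^2(Q)$. Your remark that one time derivative has been "spent" is mistaken: forming $w$ does not reduce the number of time derivatives $G$ must carry, because $G$ contains $\nabla u$ itself.

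This bound does not follow from $u\in H^m(0,T;L^2(\Omega))\cap L^2(0,T;H^{m+1}(\Omega))$. The intermediate-derivative theorem gives $u\in H^{j}(0,T;H^{(m+1)(m-j)/m}(\Omega))$, not $H^j(0,T;H^{m+1-j}(\Omega))$ as you wrote, so at $j=m$ no spatial derivative is left. Strengthening the level-$(m-1)$ hypothesis does not help either. The natural energy bounds at that level control $\nabla\partial_t^j u$ only for $j\le m-1$, whereas $\nabla\partial_t^m u\in L^2(Q)$ is exactly part of what the conclusion $w\in H^{m,m+1}(Q)$ would give by interpolation. So the source $G$, computed from the already-known $u$, is not known to be admissible, and the induction step does not close.

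The missing idea is to estimate $\nabla\partial_t^m u$ and $\partial_t^{m+1}u$ simultaneously, so that $\partial_tA\cdot\nabla\partial_t^m u$ is treated as part of the unknown and not as data. The paper does this (with its index shifted by one) by differentiating the Galerkin system in time and rerunning the energy argument of Proposition \ref{prop:wellposedness_linear_base_step} on $\partial_t^m u_\ell$. Its induction hypothesis is the set of uniform bounds on $\partial_t^j u_\ell$, $\nabla\partial_t^j u_\ell$ and $\partial_t^{j+1}u_\ell$ in $L^\infty(0,T;L^2(\Omega))$ for $j\le m-1$. The time derivative of the source then contains a multiple of $\partial_tA\cdot\nabla\partial_t^m u_\ell$. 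After the time integration by parts in the term $I_2$, and in the estimate for $\partial_t^{m+1}u_\ell$, this term is bounded by $\int_0^t\|\partial_t^m u_\ell(s,\cdot)\|_{H^1(\Omega)}^2\,ds$ and absorbed by Gronwall's lemma. Once time regularity is known, your elliptic step goes through, with the needed $\partial_t u\in L^2(0,T;H^m(\Omega))$ coming from interpolation.

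Your identification $\tilde w=\partial_t u$ is also not correct as written. Because $G$ is built from $u$, the function $\tilde u=\int_0^t\tilde w\,ds$ satisfies $\mathcal{L}_{A,q}\tilde u=F+\int_0^t P(\tilde u-u)\,ds$, where $P=2\mathrm{i}\partial_tA\cdot\nabla+\mathrm{i}(\nabla\cdot\partial_tA)-\partial_t|A|^2+\partial_tq$. This is not the original equation. Concluding $\tilde u=u$ therefore requires a uniqueness argument for this Volterra-type problem, which the Proposition does not provide.
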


\begin{proof}
We prove by induction on $m$. When $m=0$, the claims follow immediately from Proposition \ref{prop:wellposedness_linear_base_step}.

We now assume that the claims hold for some integer $m\geq 0$. That is, if $A\in W^{m+2, \infty}(Q)$, $q\in W^{m+1,\infty}(Q)$, and $F\in H^{m+1,m}(Q)$, the IBVP \eqref{eq:magnetic_schrodinger_ibvp} admits a unique solution   
\[
u\in L^{2}\left(0,T;H^{m+2}(\Omega)\cap H_{0}^{1}(\Omega) \right) \cap  H^{m+1}(0,T;L^{2}(\Omega)),
\]
which satisfies the estimate 
\begin{equation}
\label{eq:induction_estimates}
\|u\|_{H^{m+1,m+2}(Q)}
\lesssim
\|F\|_{H^{m+1,m}(Q)},
\end{equation}
where the implicit constant depends  on 
$\|A\|_{W^{m+2,\infty}(Q)}$, $\|q\|_{W^{m+1,\infty}(Q)}$, and $T$, but is independent of $u$. Moreover, if $\partial_{t}^{k}F(0,x)=0$ for almost every $x\in \Omega$ and all $k=0,1,\dots ,m$, we have the compatibility condition $\partial_{t}^{k}u(0,x)=0$ for almost every $x\in \Omega$ and all $k=0,1,\dots ,m$.

Our goal is to show that, when $A\in W^{m+3,\infty}(Q)$, $q\in W^{m+2,\infty}(Q)$,  $F\in H^{m+2,m+1}(Q)$, and $\partial_{t}^{k}F(0,x)=0$ for all $k=0,1,\dots,m+1$, the IBVP \eqref{eq:magnetic_schrodinger_ibvp} has a unique solution $u\in L^{2}\left(0,T;H^{m+3}(\Omega)\cap H_{0}^{1}(\Omega)\right) \cap  H^{m+2}(0,T;L^{2}(\Omega))$ satisfying the compatibility condition  $\partial_{t}^{k}u(0,x)=0$ for almost every $x\in\Omega$ and all $k=0,1,\dots,m+1$. Furthermore, $u$  satisfies the estimate
\begin{equation}
\label{eq:induction_step_estimate}
\|u\|_{H^{m+2,m+3}(Q)} \lesssim
\|F\|_{H^{m+2,m+1}(Q)},
\end{equation}
where the implicit constant depends on 
$\|A\|_{W^{m+3,\infty}(Q)}$, $\|q\|_{W^{m+2,\infty}(Q)}$, and $T$, but is independent of $u$.

Let us first verify the compatibility condition. From the induction hypothesis, we have $\partial_{t}^{k}u(0,x)=0$ for almost every $x\in \Omega$ and all $k=0,1\dots m$. 
On the other hand, we use the Leibniz rule to deduce that 
\begin{equation}
\label{eq:equation_m_time_detivative}
\left(\mathrm{i}\partial_t 
+\Delta 
+2\mathrm{i}A(t,x)\cdot\nabla 
+\mathrm{i}\nabla \cdot A(t,x) 
-|A(t,x)|^{2}
+q(t,x)\right)
\partial_t^{m}u  = F_0(t,x) \quad \text{ in } Q,
\end{equation}
where
\begin{align}
\label{def:F_0}
F_{0}(t,x)
=&
\partial_{t}^{m} F(t,x)-2\mathrm{i} \left(\partial_{t}^{m}A(t,x)\right)\cdot\nabla u
-
\mathrm{i} (\nabla \cdot \partial_{t}^{m}A(t,x)) u
+\left(\partial_{t}^{m} \left(|A(t,x)|^{2}\right)\right)u
\nonumber\\
&-\left(\partial_{t}^{m}q(t,x)\right) u
- \sum_{k=1}^{m-1} \binom{m}{k} \left(
2\mathrm{i} \partial_{t}^{k}
A(t,x)\cdot \partial_t^{m-k} (\nabla u)
+\mathrm{i}\nabla \cdot \partial_{t}^{k} A(t,x) \partial_{t}^{m-k}u \right.\nonumber
\\
&- \left.	
\partial_{t}^{k}|A(t,x)|^{2} \partial_{t}^{m-k}u
+\partial_{t}^{k}q(t,x) \partial_{t}^{m-k}u
\right) .
\end{align}
Therefore, using  \eqref{eq:equation_m_time_detivative}, \eqref{def:F_0}, along with the assumption $\partial_{t}^{m}F(0,x)=0$ for almost every $x\in \Omega$, together with $\partial^{k}_{t}u(0,\cdot)=0$ in $\Omega$, we have $\partial_{t}^{m+1}u(0,x)=0$ for almost every $x\in\Omega$.

We next establish the higher regularity of $u$ in the time variable $t$. More precisely, we claim that $u \in H^{m+2}(0,T;L^{2}(\Omega))$ and satisfies the estimate
\begin{equation}
\label{eq:CmH1_est}
\|u\|_{H^{m+2}\left(0,T;L^{2}(\Omega)\right)} 
\lesssim
\|F\|_{H^{m+2,m+1}(Q)},
\end{equation}
where the implicit constant   is independent of $u$.
To achieve this, by the Leibniz rule and the compatibility conditions, it holds that $\partial_t^{m+1}u$ satisfies the IBVP
\[
\begin{cases}
\left(\mathrm{i}\partial_t 
+\Delta 
+2\mathrm{i}A\cdot\nabla 
+\mathrm{i}\nabla \cdot A 
-|A|^{2}
+q\right)
\partial_t^{m+1}u  = F_1 & \text{ in } Q,\\
\partial^{m+1}_{t} u= 0 &\text{ on } \Sigma,\\
\partial^{m+1}_{t}u(0,\cdot) = 0 & \text{ in } \Omega.
\end{cases}
\]
Here 
\begin{equation}
\label{def:F_1}
F_{1}
= 
\partial_{t}^{m+1} F-2\mathrm{i} \left(\partial_{t}^{m+1}A\right)\cdot\nabla u
-
\mathrm{i} (\nabla \cdot \partial_{t}^{m+1}A) u
+\left(\partial_{t}^{m+1} \left(|A|^{2}\right)\right)u -\left(\partial_{t}^{m+1}q\right) u
- \mathcal{B}_{A,q}u,
\end{equation}
where 
\[
\mathcal{B}_{A,q}u := \sum_{k=1}^{m} \binom{m+1}{k} \left(
2\mathrm{i} \partial_{t}^{k}
A\cdot\nabla \partial_t^{m+1-k}u
+\mathrm{i}\nabla \cdot \partial_{t}^{k} A \partial_{t}^{m+1-k}u-
\partial_{t}^{k}|A|^{2} \partial_{t}^{m+1-k}u
+\partial_{t}^{k}q \partial_{t}^{m+1-k}u
\right) .
\]

In order to establish \eqref{eq:CmH1_est}, we shall   apply the Galerkin method again. For any $\ell\in \mathbb{N}$, we define $u_{\ell}$ by
\[
u_{\ell}(t,x)=\sum_{k=1}^{\ell} g_{k}^{(\ell)}(t) w_{k}(x), \quad  (t,x)\in Q,
\]
and denote $u_{\ell}^{(j)}:=\partial_{t}^{j}u_{\ell}$ for $j\in\mathbb{N}\cup\{0\}$.

Let us recall that $\mathbb{A}(t,\cdot):=\left[a(t,w_{k},w_{j})\right]_{1\leq j,k\leq \ell}$, where $a$ is the sesquilinear form defined by \eqref{eq:sesquilinear_form}. Since $A\in W^{m+3,\infty}(Q)$, $q\in W^{m+2,\infty}(Q)$, and $F\in H^{m+2}\left(0,T;L^{2}(\Omega)\right)$, we have $\mathbb{A}\in W^{m+2,\infty}(0,T;\mathbb{C}^{\ell^2})$ and $F_{\ell}\in H^{m+2}\left(0,T;\mathbb{C}^{\ell}\right)$. Hence, differentiating the identity $\partial_{t}g^{(\ell)}=-\mathrm{i}\left(\mathbb{A}g^{(\ell)}+F_{\ell}\right)$ repeatedly with respect to $t$ and arguing as in the proof of Proposition \ref{prop:wellposedness_linear_base_step}, we obtain $g^{(\ell)}\in H^{m+3}\left(0,T;\mathbb{C}^{\ell}\right)$, which yields that $u_{\ell}\in H^{m+3}\left(0,T;E_{\ell}\right)$. In particular, when $t=0$, it follows immediately from \eqref{integral_Galerkin_system} that $g^{(\ell)}(0)=0$. Also, due to the hypothesis $\partial_{t}^{k}F(0,\cdot)=0$, we have   $\partial_{t}^{k}F_{\ell}(0,\cdot)=0$ for all $k=0,1,\dots,m+1$. Therefore, we get 
\begin{equation}
\label{eq:galerkin_vanishing}
\partial_{t}^{k}g^{(\ell)}(0)=0,
\quad\text{ and }\quad
u_{\ell}^{(k)}(0,\cdot)=0  \text{ in }\Omega,
\quad k=0,1,\dots,m+2 .
\end{equation}

From the induction hypothesis, we have the following uniform estimate for every $\ell\in\mathbb{N}$:
\begin{equation}\label{eq:galerkin_hyp}
\sum_{j=0}^{m}
\left(
\left\|u_{\ell}^{(j)}\right\|_{L^{\infty}(0,T;L^{2}(\Omega))}
+\left\|\nabla u_{\ell}^{(j)}\right\|_{L^{\infty}(0,T;L^{2}(\Omega))}
+\left\|u_{\ell}^{(j+1)}\right\|_{L^{\infty}(0,T;L^{2}(\Omega))}
\right)
\lesssim
\|F\|_{H^{m+1}\left(0,T;L^{2}(\Omega)\right)},
\end{equation}
where the implicit constant being independent of $\ell$ and $u$. Thus, it suffices to prove   that
\begin{equation}
\label{eq:galerkin_step}
\left\|u_{\ell}^{(m+1)}\right\|_{L^{\infty}(0,T;L^{2}(\Omega))}
+\left\|\nabla u_{\ell}^{(m+1)}\right\|_{L^{\infty}(0,T;L^{2}(\Omega))}
+\left\|u_{\ell}^{(m+2)}\right\|_{L^{\infty}(0,T;L^{2}(\Omega))}
\lesssim
\|F\|_{H^{m+2}\left(0,T;L^{2}(\Omega)\right)} .
\end{equation}
Differentiating the system \eqref{eq:galerkin_system-1} $m+1$ times with respect to $t$ and using \eqref{eq:galerkin_vanishing},  we obtain 
\[
\begin{cases}
\left\langle\mathrm{i} \partial_{t}u_{\ell}^{(m+1)}(t,\cdot),w_{k}  \right\rangle_{L^{2}(\Omega)} - a\left(t,u_{\ell}^{(m+1)},w_{k}\right)= \left\langle F_{1,\ell}(t,\cdot),w_{k}\right\rangle_{L^{2}(\Omega)}
\\
u_{\ell}^{(m+1)}(0,x)=0,
\end{cases}
\]
for all $k=1,\dots,\ell$, where $F_{1,\ell}$ is given by \eqref{def:F_1} with $u$ replaced by $u_{\ell}$. Furthermore, \eqref{eq:galerkin_vanishing} and the compatibility condition  $\partial_{t}^{m+1}F(0,\cdot)=0$ yield $F_{1,\ell}(0,\cdot)=0$. Consequently, by repeating the proof of Proposition \ref{prop:wellposedness_linear_base_step}, we have
\[
\left\|u_{\ell}^{(m+1)}\right\|_{L^{\infty}(0,T;L^{2}(\Omega))}
+\left\|\nabla u_{\ell}^{(m+1)}\right\|_{L^{\infty}(0,T;L^{2}(\Omega))}
+\left\|u_{\ell}^{(m+2)}\right\|_{L^{\infty}(0,T;L^{2}(\Omega))}
\lesssim
\|F_{1,\ell}\|_{H^{1}\left(0,T;L^{2}(\Omega)\right)} .
\]

To estimate $F_{1,\ell}$, we deduce from \eqref{def:F_1} with $u=u_{\ell}$ and the induction hypothesis \eqref{eq:galerkin_hyp} that
\[
\left\|F_{1,\ell}\right\|_{L^{2}(Q)}
\lesssim
\left\|\partial_{t}^{m+1}F\right\|_{L^{2}(Q)}
+
\sum_{j=0}^{m}\left\|u_{\ell}^{(j)}\right\|_{L^{\infty}(0,T;H^{1}(\Omega))}
\lesssim
\|F\|_{H^{m+1}\left(0,T;L^{2}(\Omega)\right)} .
\]
On the other hand, by differentiating \eqref{def:F_1} with respect to $t$, we obtain
\begin{align*}
\label{eq:dt_R_j}
\partial_{t} F_{1,\ell}
= & 
\partial_{t}^{m+2} F
+ \partial_{t}\left[
-2\mathrm{i}\big(\partial_{t}^{m+1}A\big)\cdot\nabla u_{\ell}
- \mathrm{i}\big(\nabla\cdot\partial_{t}^{m+1}A\big)u_{\ell}
+ \big(\partial_{t}^{m+1}|A|^{2}\big)u_{\ell}
- \big(\partial_{t}^{m+1}q\big)u_{\ell}
\right]
\\
&- \partial_{t}\big(\mathcal{B}_{A,q}u_{\ell}\big). 
\end{align*}
Due to \eqref{eq:galerkin_hyp}, every term on the right-hand side above except for the term involving $\nabla u_{\ell}^{(m+1)}$ is bounded in $L^{2}(Q)$ by $\|F\|_{H^{m+2}\left(0,T;L^{2}(\Omega)\right)}$. This exception term arises from $\partial_{t}(\mathcal{B}_{A,q}u_{\ell})$ from the summand with $k=1$ and has the form $-2\mathrm{i}(m+1)\partial_{t}A\cdot\nabla u_{\ell}^{(m+1)}$.

Following the derivation of \eqref{eq:un_estimate}, after the integration by parts, which can be applied since $F_{1,\ell}(0,\cdot)=0$ in $\Omega$,  the quantity $\partial_{t}F_{1,\ell}$ arises  in the estimate of $I_{2}$ in \eqref{eq:b-energy-integrated}. Therefore, using the boundedness of $\p_t A$, the Cauchy-Schwarz inequality, and Young's inequality,  the additional contribution to $I_2$ is
\[
\left|
\int_{0}^{t}
\left\langle \left(\partial_{t}A(s,\cdot)\right)\cdot
\nabla u_{\ell}^{(m+1)}(s,\cdot),u_{\ell}^{(m+1)}(s,\cdot)
\right\rangle_{L^{2}(\Omega)}
ds
\right|
\lesssim
\int_{0}^{t}
\left\|u_{\ell}^{(m+1)}(s,\cdot)\right\|_{H^{1}(\Omega)}^{2} ds.
\]
Since the right-hand side is of the same form as the term $I_{1}$ estimated in \eqref{eq:I1-bound}, it is handled by the same  application of Gronwall's lemma. This establishes \eqref{eq:galerkin_step}, and consequently proves the estimate \eqref{eq:galerkin_hyp} with $m$ replaced by $m+1$.

Moreover, in view of \eqref{eq:galerkin_hyp} and \eqref{eq:galerkin_step}, the sequences  $\left\{\partial_{t}^{j}u_{\ell}\right\}_{\ell=1}^{\infty}$, $j=0,1,\dots,m+1$, and $\left\{\partial_{t}^{m+2}u_{\ell}\right\}_{\ell=1}^{\infty}$ are uniformly bounded in $L^{\infty}\left(0,T;H_{0}^{1}(\Omega)\right)$ and  $L^{\infty}\left(0,T;L^{2}(\Omega)\right)$, respectively. Hence, arguing as in the proof of Proposition \ref{prop:wellposedness_linear_base_step}, we conclude that
\[
\sum_{j=0}^{m+1}\left\|\partial_{t}^{j}u\right\|_{L^{\infty}(0,T;H^{1}(\Omega))}
+
\left\|\partial_{t}^{m+2}u\right\|_{L^{\infty}(0,T;L^{2}(\Omega))}
\lesssim
\|F\|_{H^{m+2}\left(0,T;L^{2}(\Omega)\right)}
\leq
\|F\|_{H^{m+2,m+1}(Q)} ,
\]
which immediately gives us the estimate \eqref{eq:CmH1_est}.

We now turn to establishing the regularity of $u$ in the spatial variable $x$. To this end, let us recall  that the function $u(t,\cdot)$ solves the elliptic boundary value problem  \eqref{eq:elliptic_problem_for_u} for almost every
$t\in(0,T)$. Since $\partial\Omega$ is smooth, it follows from \cite[Theorem 8.13] {gilbarg_trudinger_1983} that the following estimate holds for every $k\in\N\cup\{0\}$ such that $\mathfrak{F}(t,\cdot)\in H^{k}(\Omega)$ and almost every $t\in(0,T)$:
\begin{equation}
\label{eq:elliptic_shift}
\|u(t,\cdot)\|_{H^{k+2}(\Omega)}\lesssim\|\mathfrak{F}(t,\cdot)\|_{H^{k}(\Omega)} .
\end{equation}
We apply the  estimate above with $k=m+1$. By Lemma
\ref{Lemma:Winfty_Hm_product_estimate} and the monotonicity of Sobolev norms, we get 
\[
\|\mathfrak{F}\|_{L^{2}(0,T;H^{m+1}(\Omega))}
\lesssim
\|F\|_{L^{2}(0,T;H^{m+1}(\Omega))}
+\|\p_{t}u\|_{L^{2}(0,T;H^{m+1}(\Omega))}
+\|u\|_{L^{2}(0,T;H^{m+2}(\Omega))},
\]
where the implicit constant depends on $\|A\|_{W^{m+3,\infty}(Q)}$ and
$\|q\|_{W^{m+2,\infty}(Q)}$.

We next estimate the second and  third terms above. By the induction hypothesis \eqref{eq:induction_estimates}, the monotonicity of Sobolev norms, and the estimate \eqref{eq:elliptic_shift} with $k=m$, we have
\[
\|u\|_{L^{2}(0,T;H^{m+2}(\Omega))}\lesssim \|F\|_{H^{m+1,m}(Q)}\leq \|F\|_{H^{m+2,m+1}(Q)}. 
\]
To bound the second term, we have already established that $u\in H^{m+2}(0,T;L^{2}(\Omega))$. By the induction
hypothesis, it holds that $u\in L^{2}(0,T;H^{m+2}(\Omega))$. Hence, it follows from \cite[Chapter 4, Proposition 2.1]{LionsMagenes1968Vol2} that $u\in H^{1}(0,T;H^{m+1}(\Omega))$. Moreover, applying the interpolation estimate given in  \cite[Chapter 1, Proposition 2.3]{lions1972nonhomogeneous1}, we obtain
\[
\|\p_{t}u\|_{L^{2}(0,T;H^{m+1}(\Omega))}
\le
\|u\|_{H^{1}(0,T;H^{m+1}(\Omega))}
\lesssim
\|u\|_{H^{m+2}(0,T;L^{2}(\Omega))}^{\frac{1}{m+2}}
\|u\|_{L^{2}(0,T;H^{m+2}(\Omega))}^{\frac{m+1}{m+2}} .
\]
Thus, by the estimates \eqref{eq:induction_estimates} and \eqref{eq:CmH1_est}, it holds that
\begin{equation}
\label{eq:interp_dtu}
\|\p_{t}u\|_{L^{2}(0,T;H^{m+1}(\Omega))} \lesssim \|F\|_{H^{m+2,m+1}(Q)}. 
\end{equation}
By integrating the
square of \eqref{eq:elliptic_shift} with $k=m+1$ over the interval  $(0,T)$, and using the estimates \eqref{eq:induction_estimates}, 
\eqref{eq:CmH1_est}, and \eqref{eq:elliptic_shift}--\eqref{eq:interp_dtu}, we get
$u\in L^{2}\left(0,T;H^{m+3}(\Omega)\cap H^{1}_{0}(\Omega)\right)$, as well as the estimate 
\begin{equation}
\label{eq:spatial_final}
\|u\|_{L^{2}(0,T;H^{m+3}(\Omega))}
\lesssim
\|F\|_{H^{m+2,m+1}(Q)} .
\end{equation} 
From here, we obtain \eqref{eq:induction_step_estimate} by adding the estimates \eqref{eq:CmH1_est} and \eqref{eq:spatial_final}. This completes the induction step. 

Lastly, since the difference of any two solutions belongs to $H^{1}\left(0,T;L^{2}(\Omega) \right)\cap L^{2}(0,T;H_{0}^{1}(\Omega))$ and satisfies the homogeneous problem, uniqueness follows immediately from  Proposition \ref{prop:wellposedness_linear_base_step}. This completes the proof of
Lemma \ref{Lemma:wellposedness_linear_induction}.
\end{proof}

We next consider the case of nonhomogeneous Dirichlet boundary conditions. To establish the well-posedness, which is stated below, we reduce the problem to one with homogeneous Dirichlet boundary conditions and a non-zero source term. 

\begin{lem}
\label{Lemma:nonhom_wellposedness_linear}
Let $\Omega, Q$, and $\Sigma$ be the same as in Proposition \ref{prop:wellposedness_linear_base_step}. Let  $A \in W^{m+2,\infty}(Q;\mathbb{R}^n)$, $q \in  W^{m+1,\infty}(Q;\mathbb{C})$, $F\in H^{m+1,m}(Q)$, and $f\in H^{m+\frac52, m+\frac52}(\Sigma)$ for some  $m\in\mathbb{N}\cup\{0\}$.  Assume that  $\partial_{t}^{k}F(0,x)=0$ for almost every $x\in\Omega$ and all $k=0,\dots,m$, and that $\partial^{k}_{t}f(0,\cdot)|_{\partial\Omega}=0$ for $k=0,\dots,m+1$. Then the IBVP 
\begin{equation}
\label{eq:magnetic_schrodinger_problem}
\begin{cases}
\mathrm{i}\partial_t u
+\Delta u
+2\mathrm{i}A \cdot\nabla u
+\mathrm{i} (\nabla \cdot A)  u
-|A|^{2}u
+q u
=F
& \text{ in } Q,\\
u=f &  \text{ on } \Sigma,\\
u(0,\cdot)=0 &   \text{ in } \Omega, 
\end{cases}
\end{equation}
admits a unique solution $u\in H^{m+1,m+2}(Q)$, 
which satisfies the compatibility condition $\partial_{t}^{k}u(0,x)=0$ for almost every  $x\in \Omega$ and all $k=0,1,\dots ,m$. Moreover, we have the  estimate
\begin{equation}
\label{eq:u_final_estimate}
\|u\|_{H^{m+1,m+2}(Q)}
\lesssim
\|F\|_{H^{m+1,m}(Q)}
+ \|f\|_{H^{m+\frac52, m+\frac52}(\Sigma)}.
\end{equation}
Here the implicit constant depends on $\Omega$, $T$, $m$, $\|A\|_{W^{m+2,\infty}(Q)}$, $\|q\|_{W^{m+1,\infty}(Q)}$, but is independent of $u$. 
\end{lem}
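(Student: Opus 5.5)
We reduce to homogeneous boundary data by lifting $f$ into $Q$. By the trace theorem for anisotropic Sobolev spaces (Lions--Magenes, Vol.~II, Chapter 4, Theorem 2.1), there is a bounded right inverse $\mathcal{E}$ of the trace on $\Sigma$. It maps $H^{m+\frac52,m+\frac52}(\Sigma)$ into $H^{m+3}(Q)$ (which embeds in $H^{m+2,m+3}(Q)$), and satisfies
\[
\|\mathcal{E}f\|_{H^{m+3}(Q)}\lesssim\|f\|_{H^{m+\frac52,m+\frac52}(\Sigma)}.
\]
Set $w:=\mathcal{E}f$. We need $\partial_t^k w(0,\cdot)=0$ in $\Omega$ for $k=0,\dots,m+1$. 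This is arranged by first extending $f$ by zero to $t<0$, which preserves the regularity because $\partial_t^k f(0,\cdot)=0$ for $k\le m+1$, then lifting on $(-\infty,T)\times\Omega$, and finally multiplying by a time cutoff or composing with a suitable reflection. If that construction is awkward, an alternative is to subtract from the lift its Taylor polynomial in $t$ at $t=0$, multiplied by a cutoff. The correction terms $\partial_t^k w(0,\cdot)$ lie in $H^{m+3-k-\frac12}(\Omega)$ and have zero trace by the compatibility of $f$, so we can choose them in $H^1_0$-type spaces and extend them into $Q$ with vanishing lateral trace. In either case we obtain $w$ with $w|_\Sigma=f$ and $\partial_t^k w(0,\cdot)=0$ for $k\le m+1$.

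Next write $u=v+w$, so that $v$ solves \eqref{eq:magnetic_schrodinger_ibvp} with source $G:=F-\mathcal{L}w$. Here $\mathcal{L}$ is the linear operator $\mathrm{i}\partial_t+\Delta+2\mathrm{i}A\cdot\nabla+\mathrm{i}\nabla\cdot A-|A|^2+q$, and $v$ has zero lateral and initial data. We must check that $G\in H^{m+1,m}(Q)$. The term $\mathrm{i}\partial_t w$ lies in $H^{m+1,m}$ because $w\in H^{m+2}$ in time and $\partial_t w\in L^2(H^{m})$. The term $\Delta w$ lies in $H^{m+1}(0,T;L^2)\cap L^2(H^{m})$ because $w\in H^{m+3}(Q)$. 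The lower-order terms are controlled by Lemma~\ref{Lemma:Winfty_Hm_product_estimate}, using $A\in W^{m+2,\infty}$ and $q\in W^{m+1,\infty}$ (for $\nabla\cdot A$ we use that it lies in $W^{m+1,\infty}$). Together these give
\[
\|G\|_{H^{m+1,m}(Q)}\lesssim\|F\|_{H^{m+1,m}(Q)}+\|f\|_{H^{m+\frac52,m+\frac52}(\Sigma)}.
\]
The compatibility condition $\partial_t^kG(0,\cdot)=0$ for $k\le m$ follows from the Leibniz rule together with $\partial_t^j w(0,\cdot)=0$ for $j\le m+1$ and the hypothesis on $F$. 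We need $j\le m+1$ here because the $\mathrm{i}\partial_t$ term raises the order of the time derivative by one.

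Now Lemma~\ref{Lemma:wellposedness_linear_induction} gives a unique $v\in H^{m+1,m+2}(Q)$ with $\|v\|\lesssim\|G\|_{H^{m+1,m}}$ and $\partial_t^kv(0,\cdot)=0$ for $k\le m$. Adding $w$ yields existence, the compatibility condition, and the estimate \eqref{eq:u_final_estimate}. Uniqueness holds because the difference of two solutions solves the homogeneous problem, and Proposition~\ref{prop:wellposedness_linear_base_step} then forces it to vanish. The main obstacle is the first step: constructing a lift that is compatible at $t=0$ with the precise anisotropic regularity. I would handle it with the zero-extension-in-time argument, checking that the trace space $H^{m+\frac52,m+\frac52}(\Sigma)$ is exactly the image of $H^{m+3}$ under the trace, so that no regularity is lost.
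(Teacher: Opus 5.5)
Your reduction matches the paper's. You lift $f$ to a $w$ with vanishing initial time derivatives, solve for $v=u-w$ with source $F-\mathcal{L}_{A,q}w$ by Lemma~\ref{Lemma:wellposedness_linear_induction}, and get uniqueness from Proposition~\ref{prop:wellposedness_linear_base_step}. Your check that this source lies in $H^{m+1,m}(Q)$ with $\partial_t^k(F-\mathcal{L}_{A,q}w)(0,\cdot)=0$ for $k\le m$ is correct and more explicit than the paper's; it needs $\partial_t^jw(0,\cdot)=0$ only for $j\le m+1$.

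The gap is the lift itself, which you rightly call the crux but justify incorrectly.

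\emph{Zero extension fails.} Extending $f$ by zero to $t<0$ does not preserve $H^{m+\frac52}$ regularity in time. Since $m+\frac52$ is a half-integer, besides $\partial_t^kf(0,\cdot)=0$ for $k\le m+1$ one needs the borderline condition $\int_0^T t^{-1}\|\partial_t^{m+2}f(t,\cdot)\|_{L^2(\partial\Omega)}^2\,dt<\infty$, which is not assumed. For instance, take $f=t^{m+2}\psi(t)g(x)$ with $\psi\equiv1$ near $t=0$ and $0\neq g\in C^\infty(\partial\Omega)$. It satisfies every hypothesis, yet $\partial_t^{m+2}$ of its zero extension jumps at $t=0$ and is not in $H^{\frac12}$.

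\emph{The lift is not causal.} Even granting the extension, a standard (non-causal) lift on $(-\infty,T)\times\Omega$ of a function vanishing for $t<0$ need not vanish for $t<0$. A time cutoff cannot then force $\partial_t^kw(0,\cdot)=0$ without altering $w|_\Sigma$ near $t=0$.

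\emph{The Taylor correction loses regularity.} Write $W$ for an unconstrained $H^{m+3}$ lift and $\phi$ for a time cutoff. Then $\partial_t^kW(0,\cdot)$ lies only in $H^{m+\frac52-k}(\Omega)$, so $\phi(t)\sum_{k\le m+1}\frac{t^k}{k!}\partial_t^kW(0,\cdot)$ is not in $H^{m+3}(Q)$; already $\Delta$ of the $k=m+1$ term is in general not in $L^2(Q)$. Replacing it by a regularizing extension with zero lateral trace is exactly the nontrivial compatible-lifting statement you are trying to avoid.

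\emph{How the paper closes it.} The paper cites the compatible trace theorem of Lions--Magenes (Vol.~II, Ch.~4, Thm.~2.3). It says the joint map $u\mapsto(\{\partial_\nu^ju|_\Sigma\}_j,\{\partial_t^ku(0,\cdot)\}_k)$ sends $H^{m+3}(Q)$ onto the data satisfying the compatibility relations, with a bounded right inverse. Apply it prescribing:
$w|_\Sigma=f$;
vanishing higher normal traces;
$\partial_t^kw(0,\cdot)=0$ for $k\le m+1$;
$\partial_t^{m+2}w(0,\cdot)\in H^{\frac12}(\Omega)$ equal to that of $W$.

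The pointwise relations then reduce to your hypotheses $\partial_t^kf(0,\cdot)|_{\partial\Omega}=0$, $k\le m+1$. The only nontrivial borderline (integral) relation, between $f$ and $\partial_t^{m+2}w(0,\cdot)$, holds because that pair is realized by $W$. The paper's citation also demands $\partial_t^{m+2}w(0,\cdot)=0$, which would require the same borderline condition as your zero extension; only $k\le m+1$ is actually used, as you correctly note. With a lift obtained this way, the rest of your argument goes through and yields \eqref{eq:u_final_estimate}.
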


\begin{proof}
By \cite[Chapter~4, Theorem~2.3]{LionsMagenes1968Vol2}, there exists a function $w \in H^{m+3,m+3}(Q)$ such that
\begin{equation}
\label{eq:w_conditions}
w|_{\Sigma} = f, \quad \text{ and } \quad 
\partial_t^{k} w(0,x)=0 \quad \text{for almost every  } x\in\Omega \text{ and all } k=0,1,\dots,m+2,
\end{equation}
and satisfies the estimate 
\begin{equation}
\label{eq:w_estimate}
\|w\|_{H^{m+3,m+3}(Q)} 
\lesssim \|f\|_{H^{m+\frac52, m+\frac52}(\Sigma)},
\end{equation}
where the implicit constant depends on $m$, $\Omega$, and $T$, but is independent of $w$.

By Lemma \ref{Lemma:wellposedness_linear_induction}, the IBVP 
\begin{equation}
\label{eq:homogeneous_boundary_problem_v}
\begin{cases}
\mathrm{i}\partial_t v
+\Delta v
+2\mathrm{i}A\cdot\nabla v
+\mathrm{i}(\nabla\cdot A)v
-|A|^{2}v
+qv
= F - \widetilde{F}_{1} & \text{ in } Q,\\
v=0 & \text{ on } \Sigma,\\
v(0,\cdot)=0 & \text{ in } \Omega,
\end{cases}
\end{equation}
where
$\widetilde{F}_{1} = \mathrm{i}\partial_t w +\Delta w +2\mathrm{i}A\cdot\nabla w +\mathrm{i}(\nabla\cdot A)w -|A|^{2}w +qw\in H^{m+1,m}(Q)$,
has a unique solution $v\in H^{m+1,m+2}(Q)$ satisfying the following estimate:
\begin{align*}
\|v\|_{H^{m+1,m+2}(Q)}
\lesssim &
\|F\|_{H^{m+1,m}(Q)}+\|\partial_{t}w\|_{H^{m+1,m}(Q)}+ \|\Delta w\|_{H^{m+1,m}(Q)} 
+ \left\|A\cdot\nabla w\right\|_{H^{m+1,m}(Q)}
\\ & 
+ \|(\nabla\cdot A) w\|_{H^{m+1,m}(Q)} 
+ \||A|^2 w\|_{H^{m+1,m}(Q)} + \|q w\|_{H^{m+1,m}(Q)}.
\end{align*} 
We next estimate the quantities involving the coefficients $A$ and $q$. By Lemma \ref{Lemma:Winfty_Hm_product_estimate}, we obtain
\[
\|v\|_{H^{m+1,m+2}(Q)}
\lesssim  
\|F\|_{H^{m+1,m}(Q)}+\|\partial_{t}w\|_{H^{m+1,m}(Q)} + \|\Delta w\|_{H^{m+1,m}(Q)}
+ \| w\|_{H^{m+2}(Q)} .
\]
Hence, it holds that 
\begin{align*}
\|v\|_{H^{m+1,m+2}(Q)}
\lesssim
\left\|F\right\|_{H^{m+1,m}(Q)}
+ \left\| w\right\|_{H^{m+3,m+3}(Q)},
\end{align*}
where the implicit constant depends on $m$, $n$, $\|A\|_{W^{m+2,\infty}(Q)}$, and $\|q\|_{W^{m+1,\infty}(Q)}$.  Moreover, the estimate \eqref{eq:w_estimate} and the monotonicity of Sobolev norms yield that 
\begin{equation}
\label{eq:v_estimate_compact}
\|v\|_{H^{m+1,m+2}(Q)}
\lesssim  
\|F\|_{H^{m+1,m}(Q)}
+  
\|f\|_{H^{m+\frac52, m+\frac52}(\Sigma)}.
\end{equation}

We now let $u:=v+w \in H^{m+1, m+2}(Q)$. Then it is straightforward to see that $u$ satisfies the problem \eqref{eq:magnetic_schrodinger_problem}. Furthermore, we utilize the estimates \eqref{eq:w_estimate}  and \eqref{eq:v_estimate_compact}, as well as the triangle inequality, to obtain the claimed estimate \eqref{eq:u_final_estimate}. 

Let us next verify the compatibility condition  $\partial_{t}^{k}u(0,x)=0$ for almost every $x\in\Omega$ and all $k=0,1,\dots,m$. Since $v$ solves the IBVP \eqref{eq:homogeneous_boundary_problem_v}, by Lemma \ref{Lemma:wellposedness_linear_induction}, we get $\partial_{t}^{k}v(0,x)=0$ for almost every $x\in \Omega$ and all $k=0,1,\dots,m$. As $u=v+w$, the compatibility condition for $u$ follows immediately from \eqref{eq:w_conditions}.

Finally, by the same arguments as in the proof of Proposition \ref{prop:wellposedness_linear_base_step}, we obtain the uniqueness of the solution to the IBVP \eqref{eq:magnetic_schrodinger_problem}. 
This completes the proof of Lemma \ref{Lemma:nonhom_wellposedness_linear}.
\end{proof}

We are now ready to study the well-posedness of the nonlinear  IBVP \eqref{eq:ibvp_nonlinear}, provided that the boundary data $f$ is sufficiently small in a suitable sense. Let us begin by introducing some notations used to describe the nonlinear  electric potential $q(t,x,u)$. Let $\Psi$ be a holomorphic function in a neighborhood of $z=0$, where $z\in\mathbb{C}$. Then $\Psi$ admits the following power series expansion around $z=0$:
\[
\Psi(z)
= \Psi(0) + \Psi'(0)z + z^2 \widetilde{\Psi}(z),
\]
where
\[
\widetilde{\Psi}(z)
:= \sum_{k =2}^\infty \frac{\Psi^{(k)}(0)}{k!} z^{k-2}.
\]

In view of the expansion \eqref{eq:expansion_q}, the nonlinear electric potential $q$ can be written as
\[
q(t,x,u)
=
q_{1}(t,x) u
+\widetilde{q}(t,x,u) u^2,
\]
where 
\[
\widetilde{q}(t,x,u):=\sum_{k =2}^\infty \frac{q_{k}(t,x)}{k!} u^{k-2}.
\]
Let us define the corresponding linearized operator $\mathcal{L}_{\mathcal{A},q_1}$ by
\[
\mathcal{L}_{\mathcal{A},q_1}u
:= \mathrm{i}\partial_t u
- \left(-\mathrm{i}\nabla + \mathcal{A}(t,x)\right)^2 u
+ q_{1}(t,x) u.
\]
Then the nonlinear operator $\mathcal{L}_{\mathcal{A},q}$ defined in \eqref{eq:magnetic_operator} can be rewritten as
\begin{align*}
\mathcal{L}_{\mathcal{A},q}u
= \mathcal{L}_{\mathcal{A},q_1}u - \mathcal{R}(u),
\end{align*}
where the nonlinear remainder $\mathcal{R}(u)= - \widetilde{q}(t,x,u) u^2$. 
Therefore, the nonlinear IBVP \eqref{eq:ibvp_nonlinear} can be reformulated as
\begin{equation}\label{eq:nonlinear_ibvp}
\begin{cases}
\mathcal{L}_{\mathcal{A},q_1}u = \mathcal{R}(u) & \text{in } Q, \\
u = f & \text{on } \Sigma, \\
u(0,\cdot) = 0 & \text{in } \Omega.
\end{cases}
\end{equation}

We are now ready to state and prove the well-posedness of the nonlinear IBVP \eqref{eq:ibvp_nonlinear}  with small Dirichlet data $f$.

\begin{thm}
\label{thm:wellposedness}
Let $\Omega$, $Q$, and $\Sigma$ be the same as in Proposition \ref{prop:wellposedness_linear_base_step}. Assume that  $m\in\mathbb{N}$ is such that $m>n+1$, and  $\mathcal{A}\in W^{m+2,\infty}(Q;\mathbb{R}^{n})$.
Suppose that $q(t,x,z)$ satisfies Assumptions \ref{assump2}-\ref{assump3}. Let $\varepsilon_0>0$ be sufficiently small. 
Then  for all $f\in H^{m+\frac52, m+\frac52}(\Sigma)$   such that $\partial^{k}_{t}f(0,\cdot)|_{\partial\Omega}=0$ for all integers $0\leq k \leq m+1$  and $\|f\|_{ H^{m+\frac52, m+\frac52}(\Sigma)} \le \varepsilon_0$,  the nonlinear IBVP \eqref{eq:ibvp_nonlinear}  admits a unique solution $u\in H^{m+1,m+2}(Q)$ that satisfies the compatibility conditions 
$\partial_{t}^{k} u(0,x)=0$ for almost every  $x\in\Omega$ and  all $ k=0,1,\dots, m$. Moreover, the  following estimate holds: 
\begin{equation}
\label{eq:est_solution_nonlinear}
\|u\|_{H^{m+1,m+2}(Q)} 
\lesssim \|f\|_{ H^{m+\frac52, m+\frac52}(\Sigma)},
\end{equation}
where the implicit constant depends on $m$, $\|\mathcal{A}\|_{W^{m+2,\infty}(Q)}$, $\|q\|_{W^{m+1,\infty}(Q)}$, but is independent of $u$ and $f$. 
\end{thm}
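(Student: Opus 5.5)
The plan is a contraction argument built on the linear theory of Lemma \ref{Lemma:nonhom_wellposedness_linear}, working in the reformulation \eqref{eq:nonlinear_ibvp}.

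First, let $u_0 \in H^{m+1,m+2}(Q)$ be the solution of the linear problem $\mathcal{L}_{\mathcal{A},q_1}u_0=0$, $u_0|_\Sigma=f$, $u_0(0,\cdot)=0$. It is supplied by Lemma \ref{Lemma:nonhom_wellposedness_linear} with $F=0$, $A=\mathcal{A}$ and $q=q_1\in C^\infty(\overline Q)$, and satisfies $\|u_0\|_{H^{m+1,m+2}(Q)}\le C_0\|f\|$. We then seek $u=u_0+v$, where $v$ lies in the closed set
\[
X_\delta:=\{v\in H^{m+1,m+2}(Q): \|v\|_{H^{m+1,m+2}(Q)}\le \delta,\ \partial_t^k v(0,\cdot)=0,\ k=0,\dots,m\}.
\]
On $X_\delta$ we define the map $\mathcal{T}(v)=w$, where $w$ solves $\mathcal{L}_{\mathcal{A},q_1}w=\mathcal{R}(u_0+v)$ with homogeneous boundary and initial data; this $w$ comes from Lemma \ref{Lemma:wellposedness_linear_induction}. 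A fixed point of $\mathcal{T}$ is exactly a solution of \eqref{eq:nonlinear_ibvp}. For the estimate \eqref{eq:est_solution_nonlinear}, the fixed point satisfies $\|v\|\lesssim \|u_0\|^2+\|v\|^2$. For small data this gives $\|v\|\lesssim\|f\|^2$, and hence $\|u\|\lesssim \|f\|$. Uniqueness holds within the small ball. Alternatively, it follows from a Gronwall/energy argument, as in Proposition \ref{prop:wellposedness_linear_base_step}, applied to the difference of two solutions, treating $\mathcal{R}(u_1)-\mathcal{R}(u_2)=c(t,x)(u_1-u_2)$ with a bounded coefficient $c$.

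The key step is controlling the source term. We need $\mathcal{R}(u)\in H^{m+1,m}(Q)$ with $\partial_t^k\mathcal{R}(u)(0,\cdot)=0$ for $k\le m$, together with the bounds
\[
\|\mathcal{R}(u)\|_{H^{m+1,m}}\lesssim \|u\|^2_{H^{m+1,m+2}}
\quad\text{and}\quad
\|\mathcal{R}(u_1)-\mathcal{R}(u_2)\|_{H^{m+1,m}}\lesssim(\|u_1\|+\|u_2\|)\|u_1-u_2\|
\]
on small balls. The vanishing of the initial traces follows from the Leibniz rule: since $\partial_t^k u(0)=0$ for $k\le m$, every term of $\partial_t^k(u^j)$ with $j\ge2$ vanishes at $t=0$.

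For the bounds we expand $\mathcal{R}(u)=-\sum_{k\ge2}q_k u^k/k!$. The natural tool is Lemma \ref{Lemma:banach_algebra_Hm}, but it is stated for $H^m(Q)$, whereas our space $H^{m+1,m+2}(Q)$ embeds into $H^{m+1,m+1}(Q)=H^{m+1}(Q)\hookrightarrow H^m(Q)$, and the target $H^{m+1,m}$ requires $m+1$ time derivatives. I would therefore redo the Leibniz-splitting argument of Lemma \ref{Lemma:banach_algebra_Hm} directly in the anisotropic setting. Since $u\in H^{m+1}(Q)$ with $m+1>n+2$, the space $H^{m+1}(Q)$ is itself an algebra by the same lemma with $m$ replaced by $m+1$. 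This gives $\|u^k\|_{H^{m+1}(Q)}\le C^{k-1}\|u\|^k_{H^{m+1}(Q)}$, and then $H^{m+1}(Q)\subset H^{m+1,m}(Q)$ handles the target norm. Multiplication by $q_k$ is controlled by Lemma \ref{Lemma:Winfty_Hm_product_estimate}, using $\|q_k\|_{W^{m+1,\infty}}\le k!\,M R^{-k}$. This last bound is Cauchy's estimate, valid because $z\mapsto q(\cdot,\cdot,z)$ is holomorphic into $C^\infty(\overline Q)$ on, say, $|z|\le R$. The series $\sum_k M R^{-k} (C\|u\|)^k$ then converges geometrically once $C\|u\|<R$, which yields the quadratic bound. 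The Lipschitz bound follows from $u_1^k-u_2^k=(u_1-u_2)\sum_{j} u_1^j u_2^{k-1-j}$ by the same summation, at the cost of an extra factor $k$, which is harmless.

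With these bounds, Lemma \ref{Lemma:wellposedness_linear_induction} gives
\[
\|\mathcal{T}v\|\le C_1(C_0\varepsilon_0+\delta)^2
\quad\text{and}\quad
\|\mathcal{T}v_1-\mathcal{T}v_2\|\le C_1(C_0\varepsilon_0+\delta)\|v_1-v_2\|.
\]
Choosing $\delta=\varepsilon_0$ and then $\varepsilon_0$ small makes $\mathcal{T}$ a contraction of $X_\delta$ into itself, and Banach's fixed point theorem concludes.

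I expect the main obstacle to be the algebra/composition estimate in the mixed-regularity space: checking that the Sobolev embedding into $L^\infty(Q)$ holds for the relevant derivatives, which is where $m>n+1$ enters, and tracking the constants uniformly in $k$ so that the power series converges. These should work out by the argument sketched above.
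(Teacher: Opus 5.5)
Your proposal is correct. It has the same skeleton as the paper: a Banach fixed point on a small ball of $H^{m+1,m+2}(Q)$ with the compatibility conditions built in, driven by the linear lemmas and the algebra property of $H^{m+1}(Q)$. Two steps, however, are organized differently.

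\textbf{Setup of the iteration.} The paper iterates directly on $u$. At every step it solves $\mathcal{L}_{\mathcal{A},q_1}u=\mathcal{R}(v)$, $u|_\Sigma=f$, through Lemma~\ref{Lemma:nonhom_wellposedness_linear}. You instead subtract the linear solution $u_0$ once and iterate with homogeneous data through Lemma~\ref{Lemma:wellposedness_linear_induction}.

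\textbf{Treatment of the nonlinearity.} The paper uses the factorization $\mathcal{R}(v)=-\widetilde q(t,x,v)v^2$. It bounds $\mathcal{R}(v)$ by $\|\widetilde q(\cdot,\cdot,v)\|_{W^{m+1,\infty}(Q)}\|v^2\|_{H^{m+1}(Q)}$, and handles differences with the fundamental theorem of calculus applied to $D\widetilde q$. You estimate $\mathcal{R}$ term by term in its power series, using the algebra property of $H^{m+1}(Q)$, Lemma~\ref{Lemma:Winfty_Hm_product_estimate}, and Cauchy's bound on $\|q_k\|_{W^{m+1,\infty}(Q)}$.

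The paper's version is shorter, but yours is the more robust of the two, because the paper relies on bounds it does not justify. For $v$ only in $H^{m+1,m+2}(Q)$, the composition $\widetilde q(\cdot,\cdot,v)$ is generally not in $W^{m+1,\infty}(Q)$. For example, $\partial_t^{m+1}[\widetilde q(t,x,v)]$ contains $\partial_z\widetilde q\,\partial_t^{m+1}v$, which is merely square integrable. Moreover, the constant $\|\widetilde q\|_{L^\infty(\mathbb{C};W^{m+1,\infty}(Q))}$ invoked there is infinite by Liouville's theorem whenever $\widetilde q$ genuinely depends on $z$. Your termwise argument needs no composition estimate and uses $q$ only on a fixed disc.

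You also get two things the paper does not:
\begin{itemize}
\item The splitting gives the sharper bound $\|u-u_0\|_{H^{m+1,m+2}(Q)}\lesssim\|f\|^2$.
\item Your energy argument, with $c\in L^\infty(Q)$ because $H^{m+1}(Q)\hookrightarrow L^\infty(Q)$, yields uniqueness among all solutions in $H^{m+1,m+2}(Q)$. The contraction argument in the paper only gives uniqueness within $\mathcal{K}^{[0,R]}$.
\end{itemize}
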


\begin{proof}
We will use the standard fixed point argument. For this purpose, we shall work with the IBVP \eqref{eq:nonlinear_ibvp}. This allows us to separate the linear part from the nonlinear remainder, which is well-suited to apply the fixed point argument. Then we introduce an appropriate map on a suitable Banach space. We first show that this map  is well-defined, followed by establishing that it  is a contraction. Therefore, from the Banach fixed point theorem, we get a unique fixed point for this map, which is indeed a solution to the IBVP \eqref{eq:nonlinear_ibvp}.

Motivated by the discussion above, let us define $\mathcal K^{[0,R]}$, which is a closed subset of the Sobolev space $H^{m+1,m+2}(Q)$, as follows:
\begin{align*}
\mathcal{K}^{[0,R]} := &\left \{ g \in H^{m+1,m+2}(Q) :
\|g\|_{H^{m+1,m+2}(Q)} \leq R, \right.
\\
& \left. \qquad 
\partial_t^{k} g(0,x) = 0 \text{ for almost every  }x\in\Omega \text{ and all } k=0,\dots,m \right\},
\end{align*}
where $R>0$ will be chosen later. We also equip $\mathcal{K}^{[0,R]}$  with the $\|\cdot\|_{H^{m+1,m+2}(Q)}$-norm .

Let $v \in \mathcal{K}^{[0,R]}$. Consider the linear IBVP
\begin{equation}
\label{eq:linear_fixed_point}
\begin{cases}
\mathcal{L}_{\mathcal{A},q_1}u = \mathcal{R}(v) & \text{in } Q,\\[0.3em]
u = f & \text{on } \Sigma,\\[0.3em]
u(0,\cdot) = 0 & \text{in } \Omega,
\end{cases}
\end{equation}
where the nonlinear remainder $	\mathcal{R}(v)$ is given by
\begin{equation}
\label{eq:nonlinear_remainder}
\begin{aligned}
\mathcal{R}(v)
:= - \widetilde{q}(t,x,v) v^2.
\end{aligned}
\end{equation}

Since $v \in \mathcal{K}^{[0,R]}$, the coefficients $\mathcal{A} \in W^{m+2,\infty}(Q)$, and $\widetilde{q}(\cdot,\cdot,v)  \in W^{m+1,\infty}(Q)$, it follows from Lemma \ref{Lemma:banach_algebra_Hm} and the definition of $\mathcal{K}^{[0,R]}$ that $\mathcal{R}(v) \in H^{m+1,m}(Q)$
and  $\partial_{t}^{k}\mathcal{R}(v)(0,x)=0$ for almost every $x\in\Omega$ and all $k=0,1,\dots,m$. By Lemma \ref{Lemma:nonhom_wellposedness_linear}, the IBVP  \eqref{eq:linear_fixed_point} admits a unique solution
$u \in H^{m+1,m+2}(Q)$ 
satisfying the estimate
\[
\|u\|_{H^{m+1,m+2}(Q)}
\lesssim 
\|\mathcal{R}(v)\|_{H^{m+1,m}(Q)}
+
\|f\|_{H^{m+\frac52, m+\frac52}(\Sigma)}.
\]
Thus,  using \eqref{eq:nonlinear_remainder}, as well as the estimate \eqref{eq:Winfty_Hm_product_estimate}, we conclude that
\[
\|u\|_{H^{m+1,m+2}(Q)}  
\lesssim  
\|\widetilde q(v)\|_{W^{m+1,\infty}(Q)}\|v^{2}\|_{H^{m+1}(Q)}
+
\|f\|_{H^{m+\frac52, m+\frac52}(\Sigma)}.
\]
Since $v\in \mathcal{K}^{[0,R]}$, we infer from the estimate \eqref{eq:banach_algebra_estimate} that $\|v^2\|_{H^{m+1}(Q)}
\lesssim 
R^2$.
Let us  now assume that  $0<R\leq 1$. Then we immediately get
\begin{align*}
\|u\|_{H^{m+1,m+2}(Q)}
\lesssim
R^2
+
\|f\|_{ H^{m+\frac52, m+\frac52}(\Sigma)},
\end{align*}
where the implicit constant depends on $m$, $n$ and $\|\widetilde q\|_{L^{\infty}\left(\mathbb{C};W^{m+1,\infty}(Q)\right)}$.
Therefore, by choosing $\varepsilon_{0}>0$ and $R>0$ sufficiently small,
we obtain the estimate
\begin{equation}
\label{eq:est_solution_linearized}
\|u\|_{H^{m+1,m+2}(Q)}
\le R.
\end{equation}

Consider  the operator $\mathcal{L}^{[0,R]} :
\mathcal{K}^{[0,R]} \to \mathcal{K}^{[0,R]}$ defined by 
\begin{equation}
\label{eq:fixed_point_operator}
\mathcal{L}^{[0,R]}(v) = u,
\end{equation}
where $u$ is the solution to the linear IBVP \eqref{eq:linear_fixed_point}  associated with
$v \in \mathcal{K}^{[0,R]}$. We first note that the estimate \eqref{eq:est_solution_linearized} yields that $\mathcal{L}^{[0,R]}$ is well-defined.

Next, we show that $\mathcal{L}^{[0,R]}$ is a contraction for a sufficiently small $R>0$ that will be specified later.
Let $v_j\in \mathcal{K}^{[0,R]}$, $j=1,2$,  and let
$u_j = \mathcal{L}^{[0,R]}(v_j)$. Then $w := u_1 - u_2$ satisfies the following IBVP: 
\[
\begin{cases}
\mathcal{L}_{\mathcal{A},q_{1}} w
= \mathcal{R}(v_1,v_2) & \text{in } Q,\\[0.3em]
w= 0 & \text{on } \Sigma,\\[0.3em]
w(0,\cdot) = 0 & \text{in } \Omega,
\end{cases}
\]
where
\[
\mathcal{R}(v_{1},v_{2})
:= \mathcal{R}(v_{1})-\mathcal{R}(v_{2})=-\widetilde q(v_1)(v_1-v_2)(v_1+v_2) +\left(\widetilde q(v_2)-\widetilde q(v_1)\right)v_2^2.
\]
Therefore, by utilizing the estimates  \eqref{eq:Winfty_Hm_product_estimate} and \eqref{eq:banach_algebra_estimate}, as well as the triangle inequality, we deduce that
\begin{equation}
\label{eq:R_estimate_int}
\begin{aligned}
&\|\mathcal{R}(v_1,v_2)\|_{H^{m+1,m}(Q)}
\\
&\lesssim
\|\widetilde{q}(v_2)-\widetilde{q}(v_1)\|_{H^{m+1}(Q)}
\|v_2^2\|_{H^{m+1}(Q)}
+
\|\widetilde{q}(v_1)\|_{W^{m+1,\infty}(Q)}
\|(v_1-v_2)(v_1+v_2)\|_{H^{m+1}(Q)}. 
\end{aligned}
\end{equation}

We now derive an upper bound for the quantities appearing on the right-hand side of \eqref{eq:R_estimate_int}. To estimate the second term, it follows from the estimate  \eqref{eq:banach_algebra_estimate}, together with the fact that $v_{j}\in \mathcal{K}^{[0,R]} $, $j=1,2$, that
\begin{equation}
\label{eq:product_estimates_contraction}
\begin{aligned}
& \|v_2^2\|_{H^{m+1}(Q)}
\lesssim  R^2 \quad\text{ and }\quad 	\|(v_1-v_2)(v_1+v_2)\|_{H^{m+1}(Q)}
\lesssim R \|v_1-v_2\|_{H^{m+1}(Q)}.
\end{aligned}
\end{equation}
For the first term of \eqref{eq:R_estimate_int}, by the fundamental theorem of calculus, we get
\[
\widetilde{q}(v_{1})-\widetilde{q}(v_{2})=\int_{0}^{1}
\frac{d}{ds}\widetilde{q}(sv_{1}+(1-s)v_{2}) ds
=
\int_{0}^{1}
D\widetilde{q}\bigl(sv_{1}+(1-s)v_{2}\bigr)(v_{1}-v_{2})
ds.
\]
Here $D\widetilde{q}(v)$ denotes the  derivative of  $\widetilde{q}$ evaluated at.
Therefore, we utilize the estimate \eqref{eq:banach_algebra_estimate} to deduce  that
\begin{equation}
\label{eq:q_tilde_difference_estimate}
\begin{aligned}
\left\|\widetilde{q}(v_{1})-\widetilde{q}(v_{2})\right\|_{H^{m+1}(Q)}
&\le
\int_{0}^{1}
\left\|
D\widetilde{q}\bigl(sv_{1}+(1-s)v_{2}\bigr)(v_{1}-v_{2})
\right\|_{H^{m+1}(Q)} ds  
\\
&\lesssim
\int_{0}^{1}
\left\|
D\widetilde{q}\bigl(sv_{1}+(1-s)v_{2}\bigr)
\right\|_{W^{m+1,\infty}(Q)}
\|v_{1}-v_{2}\|_{H^{m+1}(Q)} ds 
\\
&\lesssim
\left\|\widetilde{q}\right\|_{W^{1,\infty}(\mathbb{C};W^{m+1,\infty}(Q))}
\|v_{1}-v_{2}\|_{H^{m+1}(Q)}.
\end{aligned}
\end{equation}

By substituting the estimates  \eqref{eq:product_estimates_contraction} and \eqref{eq:q_tilde_difference_estimate}  into \eqref{eq:R_estimate_int}, 
and incorporating the assumption $0<R\le 1$, 
we obtain

\begin{equation}
\label{eq:R_difference_estimate}
\|\mathcal{R}(v_1,v_2)\|_{H^{m+1,m}(Q)}
\lesssim
R
\|v_{1}-v_{2}\|_{H^{m+1}(Q)},
\end{equation}
where the implicit constant  depends only on $m$, $n$ and $\left\|\widetilde{q}\right\|_{W^{1,\infty}\left(\mathbb{C};W^{m+1,\infty}(Q)\right)}$. 
Thus, an application of Lemma \ref{Lemma:nonhom_wellposedness_linear} yields that $w\in\mathcal{K}^{[0,R]} $ and satisfies the following estimate:
\begin{align*}
\left\|w\right\|_{H^{m+1,m+2}(Q)}
\lesssim
\left\|\mathcal{R}(v_{1},v_{2}) \right\|_{H^{m+1,m}(Q)}\lesssim R \|v_{1}-v_{2}\|_{H^{m+1}(Q)}
\lesssim R \|v_{1}-v_{2}\|_{H^{m+1,m+2}(Q)}.
\end{align*}
Let us now  choose $R_0>0$ such that the  product of the implicit constant above and $R_0$ is less than $1$ and fix  
$R\in(0,R_0]$. Hence, the map $\mathcal{L}^{[0,R]}$ defined by \eqref{eq:fixed_point_operator} is a contraction on
$\mathcal K^{[0,R]}$. By the Banach fixed point theorem, $\mathcal{L}^{[0,R]}$ admits a unique fixed point
$u\in\mathcal K^{[0,R]}$. Furthermore, due to the fact that $\cR(0)=0$, we conclude that $u$ satisfies the following estimate  for any
$f\in H^{m+\frac52, m+\frac52}(\Sigma)$:
\begin{align*}
\|u\|_{H^{m+1,m+2}(Q)}
&\lesssim 
\|\mathcal R(u,0)\|_{H^{m+1,m}(Q)}
+  \|f\|_{ H^{m+\frac52, m+\frac52}(\Sigma)}
\\
&\lesssim R\|u\|_{H^{m+1,m+2}(Q)}
+  \|f\|_{ H^{m+\frac52, m+\frac52}(\Sigma)},
\end{align*}
where the last step follows from the estimate \eqref{eq:R_difference_estimate}.
From here, we obtain
the estimate \eqref{eq:est_solution_nonlinear} immediately. 
This completes the proof of Theorem \ref{thm:wellposedness}.
\end{proof}

\section{Construction of Geometric Optics Solutions}
\label{sec:construction_GO_solution}

In this section, we construct geometric optics solutions by  following the methods developed in \cite{Kian_Soccorsi, Lai_Lu_Zhou}. More precisely, we first construct solutions for the linearized operator $\mathcal{L}_{\mathcal{A},q_{1}}$ that vanish at $t=0$. These solutions depend on a large parameter $\lambda>0$ and contain a principal part and a remainder term. 
On the other hand, the remainder term will vanish in a suitable norm as $\lambda \to \infty$.

The subtlety and the difficulty of this construction are as follows. To obtain the uniqueness of coefficients $\mathcal{A}$ and $q_{1}$, it suffices to prove an $H^{1,1}(Q)$-decay for the remainder term, which is stated in Proposition \ref{prop: geom opt sol 1}. However, to achieve the uniqueness of the higher order coefficients $q_{i}$, $i\geq 2$, 
we need a stronger decay estimate. Thus, we also prove the decay of the remainder term in $H^{m+1}(Q)$ for sufficiently large $m$ such that $H^{m+1}(Q)$ is a Banach algebra  in Corollary \ref{Corollary: geom opt sol 1}.

Finally, in Lemma \ref{Lemma: geom opt sol_adjoint} and Corollary \ref{Corollary:adjoint geom opt sol 1}, we construct  solutions for the adjoint operator $\mathcal{L}_{\mathcal{A},q_{1}}^\ast$ that vanish at $t=T$. The construction follows the same strategy as in the previous case, combined with a time-reversal argument.

Our first result in this section provides the existence of a smooth cutoff function that is constant on prescribed intervals, and its derivatives satisfy suitable decay estimates.

\begin{lem}
\label{Lemma:time_cutoff}
For every $0<\varepsilon<\frac{T}{4}$, there exists a function
$\chi_{\varepsilon}\in C_{c}^{\infty}(0,T)$ satisfying the following properties:
\begin{enumerate}
\item  $0 \leq \chi_\varepsilon \leq 1$.

\item $\chi_\varepsilon(t) = 0$ on $[0,\varepsilon] \cup [T-\varepsilon, T]$.

\item $\chi_\varepsilon(t) = 1$ on $[2\varepsilon, T-2\varepsilon]$.

\item For every integer $k \geq 0$, there exists a constant $C_k>0$, which is independent of $\varepsilon$, such that
\begin{equation}
\label{eq:cutoff_derivative_estimate}
\left|\frac{d^{k}}{dt^{k}} \chi_\varepsilon(t) \right|
\le C_k \varepsilon^{-k} 
\quad \text{for all } t \in \mathbb{R}.
\end{equation}
\end{enumerate}
\end{lem}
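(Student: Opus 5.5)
The plan is to obtain $\chi_\varepsilon$ by rescaling a single fixed smooth step function, so that all constants come from that one profile and the $\varepsilon^{-k}$ bounds follow from the chain rule.

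First I fix, independently of $\varepsilon$, a function $\eta\in C^\infty(\R)$ with $0\le\eta\le1$, $\eta(s)=0$ for $s\le 1$ and $\eta(s)=1$ for $s\ge 2$. A concrete choice is the following. Let $\psi(s)=e^{-1/s}$ for $s>0$ and $\psi(s)=0$ for $s\le0$, and set
\[
\eta(s)=\frac{\psi(s-1)}{\psi(s-1)+\psi(2-s)}.
\]
The denominator never vanishes, since at every $s$ at least one of the two terms is positive. Hence $\eta$ is smooth, takes values in $[0,1]$, and has the stated plateaus. Because $\eta'$ is supported in $[1,2]$, each derivative $\eta^{(k)}$ is bounded. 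I set $M_k:=\sup_{\R}|\eta^{(k)}|<\infty$.

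Next I define
\[
\chi_\varepsilon(t):=\eta\!\left(\frac{t}{\varepsilon}\right)\eta\!\left(\frac{T-t}{\varepsilon}\right).
\]
The four properties are checked as follows.

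\begin{enumerate}
\item Property (1) holds because the product of two factors in $[0,1]$ lies in $[0,1]$.
\item For property (2): if $t\le\varepsilon$ then $t/\varepsilon\le1$, so the first factor vanishes; if $t\ge T-\varepsilon$ then $(T-t)/\varepsilon\le1$, so the second factor vanishes. This also covers $t<0$ and $t>T$. Hence $\operatorname{supp}\chi_\varepsilon\subset[\varepsilon,T-\varepsilon]\subset(0,T)$, which gives $\chi_\varepsilon\in C_c^\infty(0,T)$.
\item For property (3): on $[2\varepsilon,T-2\varepsilon]$ both arguments are at least $2$, so both factors equal $1$. This interval is nonempty because $\varepsilon<T/4$.
\end{enumerate}

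For property (4), I apply the Leibniz rule together with the chain rule:
\[
\frac{d^k}{dt^k}\chi_\varepsilon(t)=\sum_{j=0}^k\binom{k}{j}\varepsilon^{-j}\eta^{(j)}\!\left(\frac t\varepsilon\right)(-1)^{k-j}\varepsilon^{-(k-j)}\eta^{(k-j)}\!\left(\frac{T-t}\varepsilon\right).
\]
Every term carries exactly the factor $\varepsilon^{-k}$. Therefore
\[
\left|\frac{d^k}{dt^k}\chi_\varepsilon(t)\right|\le C_k\,\varepsilon^{-k},\qquad C_k:=\sum_{j=0}^k\binom kj M_jM_{k-j},
\]
and $C_k$ is independent of $\varepsilon$ and of $t\in\R$.

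There is no genuine obstacle in this argument. The only step that needs care is property (4): the constants must come from the single fixed profile $\eta$, and not from a construction that changes with $\varepsilon$, so that the rescaling alone produces the factor $\varepsilon^{-k}$.
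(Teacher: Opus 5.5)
Your proof is correct and is essentially the paper's argument: your $\eta(s)$ is exactly the paper's $h(s-1)$. Because $\varepsilon<T/4$ keeps the two transition zones disjoint, your product $\eta(t/\varepsilon)\eta((T-t)/\varepsilon)$ coincides with the paper's piecewise $\chi_\varepsilon$, and the product form makes smoothness on all of $\R$ immediate, with the Leibniz rule giving the same $\varepsilon^{-k}$ bound as the paper's chain-rule computation on each transition interval.
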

\begin{proof}
Define the function $ f : \mathbb{R} \to \mathbb{R} $ by
\[
f(t) :=
\begin{cases}
e^{-1/t}, & t>0,\\
0, & t \leq 0.
\end{cases}
\]
Observe that
$f\in C^\infty(\mathbb{R})$.
We then define another function
$ h : \mathbb{R} \to \mathbb{R} $ by
$h(t) := \frac{f(t)}{f(t) + f(1-t)}$. 
Then it follows immediately that $h\in C^\infty(\R)$ and
\begin{equation}
\label{eq:properties_h_cutoff}
h(t) =
\begin{cases}
0, & t \leq 0,\\
0< h(t) < 1, & 0 < t < 1,\\
1, & t \geq 1.
\end{cases}
\end{equation}

For any fixed  $0<\varepsilon<\frac{T}{4}$, let us define the function $\chi_\varepsilon:(0,T)\to[0,1]$
by
\[
\chi_\varepsilon(t)
:=
\begin{cases}
0, & t\in [0,\varepsilon],
\\
h\left(\frac{t-\varepsilon}{\varepsilon}\right),
& t \in (\varepsilon,  2\varepsilon),
\\ 
1, & t \in [2\varepsilon,  T-2\varepsilon],
\\ 
h\left(\frac{T-\varepsilon-t}{\varepsilon}\right),
& t \in (T-2\varepsilon,  T-\varepsilon),
\\ 
0, & t\in [  T-\varepsilon, T].
\end{cases}
\]
By  \eqref{eq:properties_h_cutoff}, we have $\chi_\varepsilon\in C^\infty(0,T)$ and $0\leq \chi_\varepsilon\leq 1$. This proves (1). 

Let us next verify (2) and (3). To this end, we note that $\chi_\varepsilon(t)=0$
on $[0,\varepsilon]\cup[T-\varepsilon,T]$. This yields that 
$\supp (\chi_\varepsilon)\subseteq[\varepsilon,T-\varepsilon]$ and $\chi_\varepsilon(t) = 1$ on $[2\varepsilon, T-2\varepsilon]$. 
Thus, we conclude that  $\chi_\varepsilon\in C_c^\infty(0,T)$. 

It remains to prove the  estimate \eqref{eq:cutoff_derivative_estimate}. 
When $k=0$, we obtain the estimate immediately from the bound $0\leq \chi_\varepsilon \leq 1$. In particular, we take $C_0=1$ in this case.

When $k\geq 1$, we first observe that all derivatives vanish on the intervals where
$\chi_\varepsilon$ is constant. On
$(\varepsilon,2\varepsilon)$, we compute that
\[
\frac{d^k}{dt^k}\chi_\varepsilon(t)
=
\varepsilon^{-k}
h^{(k)}\left(\frac{t-\varepsilon}{\varepsilon}\right),
\]
where $h^{(k)}$ denotes the $k^{\mathrm{th}}$ derivative of $h$. Similarly, on $(T-2\varepsilon,T-\varepsilon)$, we have
\[
\frac{d^k}{dt^k}\chi_\varepsilon(t)
=
(-1)^k\varepsilon^{-k}
h^{(k)}\left(\frac{T-\varepsilon-t}{\varepsilon}\right).
\]
Since $h^{(k)}$ is continuous on the compact interval $[0,1]$, it holds that
\[
\left|\frac{d^{k}}{dt^{k}}\chi_\varepsilon(t)\right|
\leq  \max_{s\in[0,1]} |h^{(k)}(s)| \varepsilon^{-k},
\qquad t\in\mathbb{R},
\]
which yields the estimate  \eqref{eq:cutoff_derivative_estimate} with $C_k:=\max_{s\in[0,1]} |h^{(k)}(s)|$. This completes the proof of Lemma \ref{Lemma:time_cutoff}.
\end{proof}

Our next result states the existence of amplitude terms in the geometric optics solutions, which satisfy certain transport equations. Throughout this section, $\mathcal{L}_{A,q}$ denotes the linear dynamical Schr\"odinger operator
\begin{equation}
\label{eq:def_linear}
\mathcal{L}_{A,q}  :=\mathrm{i}\partial_t 
+\Delta 
+2\mathrm{i}A(t,x)\cdot\nabla 
+\mathrm{i}\nabla \cdot A(t,x) 
-|A(t,x)|^{2}
+q(t,x).
\end{equation}
\begin{lem}
\label{lem:transport}
Let $Q$ be the same as in Proposition \ref{prop:wellposedness_linear_base_step}, and let $A\in C^\infty(Q, \R^n)$, $q\in C^{\infty}( \overline{Q};\C)$.
For any integer $N\geq 0$ and any vector $\alpha\in\R^n\setminus\{0\}$, there exist functions
$m_{j}\in C^{\infty}({\overline{Q}})$, $0\leq j\leq N$, that satisfy
the following system of transport equations:
\begin{equation}
\label{eqn: Transport}
\begin{cases}
\alpha\cdot\left(\nabla+\mathrm{i}A \right)m_{0}=0,\\
-2\mathrm{i}\alpha\cdot\left(\nabla+\mathrm{i}A\right) m_{1}=\mathcal L_{A,q}m_{0},\\
-2\mathrm{i}\alpha\cdot\left(\nabla+\mathrm{i}A\right) m_{2}=\mathcal L_{A,q}m_{1},\\
\hspace{2.5cm}   \vdots    \\
-2\mathrm{i}\alpha\cdot\left(\nabla+\mathrm{i}A\right) m_{N}=\mathcal  L_{A,q}m_{N-1}.
\end{cases}
\end{equation}
In particular, we have $
\partial_{t}^{k} m_j(0,x)=\partial_{t}^{k}m_j(T,x)=0$ for  $x\in\Omega$ and $k\in\mathbb{N}\cup\{0\}$.

\end{lem}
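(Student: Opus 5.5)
The plan is to treat $t$ as a parameter and solve each transport equation in \eqref{eqn: Transport} by integrating along straight lines in the direction $\alpha$, after conjugating away the magnetic term. The vanishing of all time derivatives at $t=0$ and $t=T$ will come from inserting a cutoff in time. This is natural because the transport operator $\alpha\cdot(\nabla+\mathrm{i}A)$ contains no $t$-derivative.

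\textbf{Extension and phase.} First, extend $A$ and $q$ to smooth functions on $\R\times\R^n$, for instance by a Seeley-type extension, using that $\p\Omega$ is smooth. Write $\alpha\cdot x = |\alpha|^2\sigma$ and $x' := x-\sigma\alpha$, so that $x=x'+\sigma\alpha$ with $x'\perp\alpha$. Define
\[
\Phi(t,x):=\int_0^{\sigma}\alpha\cdot A(t,x'+s\alpha)\,ds .
\]
This function is smooth, and along the lines $\sigma\mapsto x'+\sigma\alpha$ it satisfies $\alpha\cdot\nabla\Phi=\alpha\cdot A$. Consequently, for any smooth $g$,
\[
\alpha\cdot(\nabla+\mathrm{i}A)\bigl(e^{-\mathrm{i}\Phi}g\bigr)=e^{-\mathrm{i}\Phi}\,\alpha\cdot\nabla g ,
\]
which reduces every transport equation to $\alpha\cdot\nabla g=G$.

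\textbf{Defining the amplitudes.} Fix $\chi\in C_c^\infty(0,T)$; for example, $\chi=\chi_\varepsilon$ from Lemma \ref{Lemma:time_cutoff} works. Take any smooth $b$ with $\alpha\cdot\nabla b=0$, for instance $b(t,x)=h(t,x')$, and set
\[
m_0:=\chi(t)\,b(t,x)\,e^{-\mathrm{i}\Phi(t,x)} .
\]
Then the first equation of \eqref{eqn: Transport} holds. For $j\ge1$, set $m_j:=e^{-\mathrm{i}\Phi}g_j$, where
\[
g_j(t,x):=\frac{\mathrm{i}}{2}\int_0^{\sigma}\bigl(e^{\mathrm{i}\Phi}\mathcal L_{A,q}m_{j-1}\bigr)(t,x'+s\alpha)\,ds .
\]
This gives $\alpha\cdot\nabla g_j=\tfrac{\mathrm{i}}{2}e^{\mathrm{i}\Phi}\mathcal L_{A,q}m_{j-1}$, and by the conjugation identity $m_j$ satisfies $-2\mathrm{i}\,\alpha\cdot(\nabla+\mathrm{i}A)m_j=\mathcal L_{A,q}m_{j-1}$. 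Smoothness of each $m_j$ on $\overline Q$ follows inductively, since integrals of smooth functions with smooth limits are smooth and $\mathcal L_{A,q}$ maps $C^\infty$ to $C^\infty$.

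\textbf{Vanishing at $t=0$ and $t=T$.} The remaining claim is proved by induction on $j$, showing that $\supp m_j\subseteq \supp\chi\times\R^n$. This holds for $m_0$. For the inductive step, $\mathcal L_{A,q}$ is a differential operator and cannot enlarge the $t$-support. The line integral defining $g_j$ is taken at fixed $t$, so it also preserves the $t$-support. Hence each $m_j$ vanishes identically for $t$ near $0$ and near $T$, and so $\p_t^k m_j(0,x)=\p_t^k m_j(T,x)=0$ for all $k\ge0$.

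\textbf{Main obstacle.} The only delicate point is technical: making sure that the phase and the line integrals are smooth up to $\overline Q$. Working with smooth global extensions of $A$ and $q$ and integrating from the hyperplane $\{\alpha\cdot x=0\}$, rather than from $-\infty$, avoids any need for compact support or decay in $x$. I expect the rest of the argument to be routine.
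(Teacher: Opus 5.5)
Your proposal is correct and follows essentially the same route as the paper. Both arguments conjugate by a phase obtained by integrating $\alpha\cdot A$ along lines in the direction $\alpha$, solve each transport equation by a line integral at fixed $t$, and get the vanishing at $t=0,T$ from the cutoff $\chi_\varepsilon$, whose $t$-support is preserved by $\mathcal L_{A,q}$ and by the line integrals. The differences are minor: the paper uses $\Theta=\int_s^\infty\alpha\cdot\widetilde A\,dp$ with a spatially compactly supported extension $\widetilde A$ (for the same extension, $\Theta$ is your $-\Phi$ plus a function of $(t,x')$ alone), whereas basing your integral on $\{\alpha\cdot x=0\}$ removes the need for compact support, and you also make explicit the extension of $q$ that the line integrals leaving $\Omega$ implicitly require.
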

\begin{proof}
Our strategy is to construct explicit smooth solutions to  \eqref{eqn: Transport}. Let us write any $x\in\mathbb{R}^{n}$  uniquely as
\begin{equation}
\label{eq:x_decomposition_alpha}
x = x' + s\alpha,\quad   x'\in\alpha^{\perp},  \:   s\in\mathbb{R}.
\end{equation}
We first construct $m_0$. To this end, we define
\begin{equation}
\label{eq:theta_definition}
\Theta(t,x',s)
:=
\int_s^\infty \alpha\cdot \widetilde{A}(t,x'+p\alpha) dp,
\end{equation}  
where $\widetilde A\in C^\infty((0,T)\times \mathbb R^n)$ is an extension of $A$  that is compactly supported in the spatial variable. Let us note that such extension can be  obtained by the Sobolev extension theorem, see for instance
\cite[Section 5.4, Theorem 1]{evans_pde}. 

Let $\chi_\varepsilon\in C_c^\infty(0,T)$ be a cutoff function as in Lemma \ref{Lemma:time_cutoff}, and suppose
that $\eta\in C^\infty(\alpha^\perp)$. We then define
\begin{equation}
\label{eq:m0_definition}
m_0(t,x'+s\alpha)
:=
\chi_\varepsilon(t)\eta(x')
e^{\mathrm{i}\Theta(t,x',s)}.
\end{equation}
Then we have $m_0\in C^\infty(\overline{Q})$. To verify that $m_0$ satisfies the first equation in \eqref{eqn: Transport}, it follows from direct computations that 
\begin{align*}
\p_s m_0(t,x'+s\alpha)
&=
-\mathrm{i}\alpha\cdot \widetilde{A}(t,x'+s\alpha)m_0(t,x'+s\alpha),
\end{align*}
where we have used that $\tilde A$ is compactly supported. 
On the other hand, we compute that 
$$
\p_s m_0(t,x'+s\alpha)
=
\alpha\cdot\nabla m_0(t,x'+s\alpha).
$$
Furthermore, as $\widetilde{A}=A$ in $\overline Q$, it follows that
$$
\alpha\cdot\left(\nabla+\mathrm{i}A\right)m_0=0
\quad \text{in } Q.
$$
Moreover, since $\chi_\varepsilon\in C_c^\infty(0,T)$, we have $m_0(0,x)=m_0(T,x)=0$ in $\Omega$.

We now proceed to construct $m_j$, $1\leq j\leq N$, inductively. 
Suppose that the amplitude term 
$m_{j-1}\in C^\infty (\overline Q)$ satisfies the $(j-1)$-th transport equation in \eqref{eqn: Transport} and  $m_{j-1}(0,x)=m_{j-1}(T,x)=0$ for $x\in\Omega$. 
Let us recall that the $j$-th transport equation reads
\begin{equation}
\label{eq:jth_transport_equation}
-2\mathrm{i}\alpha\cdot(\nabla+\mathrm{i}A)m_j
=
\mathcal{L}_{A,q}m_{j-1}.
\end{equation}
and set 
\begin{equation}
\label{eq:mj_transport_solution_def}
m_j(t,x'+s\alpha)
:=
\frac{\mathrm{i}}{2}
e^{\mathrm{i}\Theta(t,x',s)}
\int_0^s
e^{-\mathrm{i}\Theta(t,x',\ell)}
\mathcal{L}_{A,q}m_{j-1}(t,x'+\ell\alpha) d\ell
\end{equation}
and observe that $m_{j}\in C^\infty(\overline{Q})$. 

To show that $m_{j}$ satisfies  \eqref{eq:jth_transport_equation},
we rewrite  \eqref{eq:mj_transport_solution_def} as
\begin{equation}\label{eq:mj-transport}
m_j(t,x'+s\alpha)e^{-\mathrm{i}\Theta(t,x',s)}
=
\frac{\mathrm{i}}{2}
\int_0^s
e^{-\mathrm{i}\Theta(t,x',\ell)}
\mathcal{L}_{A,q}m_{j-1}(t,x'+\ell\alpha)  d\ell .
\end{equation}
Differentiating both sides with respect to $s$, we obtain
\begin{equation}
\label{eq:derivative_mj_integrating_factor}
\p_s 
\left[
m_j(t,x'+s\alpha)e^{-\mathrm{i}\Theta(t,x',s)}
\right]
=
\frac{\mathrm{i}}{2}
e^{-\mathrm{i}\Theta(t,x',s)}
\mathcal{L}_{A,q}m_{j-1}(t,x'+s\alpha).
\end{equation}
Using \eqref{eq:theta_definition}, 
the left-hand side of equation \eqref{eq:derivative_mj_integrating_factor} becomes
\[
e^{-\mathrm{i}\Theta(t,x',s)}
\left[
\p_s m_j(t,x'+s\alpha)
+
\mathrm{i}\alpha\cdot \widetilde{A}(t,x'+s\alpha)m_j(t,x'+s\alpha)
\right].
\]
Hence, we get from the previous computations that 
\[
\p_s m_j(t,x'+s\alpha)
+
\mathrm{i}\alpha\cdot \widetilde{A}(t,x'+s\alpha)m_j(t,x'+s\alpha)
=
\frac{\mathrm{i}}{2}
\mathcal{L}_{A,q}m_{j-1}(t,x'+s\alpha).
\]
Moreover, since
\[
\p_s m_j(t,x'+s\alpha)
=
\alpha\cdot\nabla m_j(t,x'+s\alpha),
\]
and $\widetilde{A}$ is a compactly supported extension of $A$, we have
\[
\alpha\cdot(\nabla+\mathrm{i}A)m_j
=
\frac{\mathrm{i}}{2}\mathcal{L}_{A,q}m_{j-1} \quad \text{ in }Q.
\]
From here, we immediately obtain the $j^{\mathrm{th}}$ transport equation \eqref{eq:jth_transport_equation}.

It remains to verify  $
\partial_{t}^{k} m_j(0,x)=\partial_{t}^{k}m_j(T,x)=0$ for  $x\in\Omega$ and $k\in\mathbb{N}\cup\{0\}$. Since
$\chi_{\varepsilon}\in C_c^\infty(0,T)$, the function $m_0$ and its time derivative vanish in a neighborhood
of $t=0$ and $t=T$. Then it follows that  $\partial^{k}_{t}\mathcal{L}_{A,q}m_0=0$ at 
$t=0$ and $t=T$ for any $k\in \mathbb{N}\cup\{0\}$. Therefore, using \eqref{eq:mj-transport} with $j=1$ gives us $\partial^{k}_{t}m_1(0,x)=\partial^{k}_{t}m_1(T,x)=0$ for any $k\in \mathbb{N}\cup\{0\}$ and $x\in\Omega$. 
Repeating this argument inductively, we obtain $\partial^{k}_{t}m_j(0,x)=\partial^{k}_{t}m_j(T,x)=0$ for any $k\in \mathbb{N}\cup\{0\}$, $x\in\Omega$ and $0\le j\le N$. 
This completes the proof of Lemma \ref{lem:transport}.
\end{proof}

We are now ready to state and prove the existence of geometric optics solutions to the linear operator \eqref{eq:def_linear}.

\begin{prop}
\label{prop: geom opt sol 1}
Let $\Omega, Q$, and $\Sigma$ be as in Proposition \ref{prop:wellposedness_linear_base_step}.
Let $\mathcal{L}_{A,q}$ be the linear dynamical Schr\"odinger operator given by \eqref{eq:def_linear} with  $A \in C^{\infty}(\overline{Q};\R^n)$ and  $q \in C^{\infty}(\overline{Q};\C)$. Then there exists a constant $\lambda_0 \gg 1$ such that for all $\lambda\geq \lambda_0$, there exists a solution $v_{\lambda}\in H^{1,2}(Q)$ to the initial value problem 
\begin{equation}
\label{eq:ibvp_GO_solution}
\begin{cases}
\mathcal{L}_{A,q} v
=0
& \text{ in } Q,
\\
v(0,\cdot)=0 & \text{ in }\Omega, 
\end{cases}
\end{equation}
having the form
\[
v_{\lambda}(t,x)= e^{\mathrm{i}(\lambda x\cdot\alpha-\lambda^2\lvert\alpha\rvert^2t)}\bigg(m_{0}(t,x)+\frac{m_{1}(t,x)}{\lambda}+\frac{m_{2}(t,x)}{\lambda^2}+\cdots+\frac{m_{N}(t,x)}{\lambda^N}\bigg)+ R_{\lambda}(t,x),
\]
where  $\alpha \in \mathbb{R}^n\setminus\{0\}$, $N\geq 3$, and the amplitudes $m_j$ satisfy the transport equation  \eqref{eqn: Transport} for $0\leq j\leq N$. Moreover, the remainder term $R_{\lambda}\in H^{1,2}(Q)$ satisfies $R_{\lambda}(0,x)=0$ for $x\in\Omega$, as well as  the estimate
\begin{equation}
\label{eq:R_lambda_H12_estimate}
\|R_{\lambda}\|_{H^{1,2}(Q)}
\lesssim \frac{1}{\lambda} \left\| \mathcal{L}_{A,q}m_{N} \right\|_{H^{1}\left(0,T;L^{2}(\Omega)\right)}.
\end{equation}
Here the implicit constant depends only  on $\|A\|_{W^{2,\infty}(Q)}$, $\|q\|_{W^{1,\infty}(Q)}$, $\alpha$, and $Q$.
\end{prop}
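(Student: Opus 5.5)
The plan is to write $v_\lambda = \Phi_\lambda + R_\lambda$, with
\[
\Phi_\lambda := e^{\mathrm{i}\varphi_\lambda}\sum_{j=0}^N \lambda^{-j} m_j, \qquad \varphi_\lambda(t,x):=\lambda x\cdot\alpha-\lambda^2|\alpha|^2 t,
\]
where the $m_j$ are the amplitudes supplied by Lemma \ref{lem:transport}. The remainder $R_\lambda$ will be taken as the solution of the homogeneous-boundary IBVP \eqref{eq:magnetic_schrodinger_ibvp} with source $F=-\mathcal{L}_{A,q}\Phi_\lambda$, which Proposition \ref{prop:wellposedness_linear_base_step} provides. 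Then $\mathcal{L}_{A,q}v_\lambda=0$ holds automatically. Moreover $v_\lambda(0,\cdot)=0$, because $m_j(0,\cdot)=0$ by Lemma \ref{lem:transport} and $R_\lambda(0,\cdot)=0$. Since no boundary condition is imposed on $v_\lambda$ in \eqref{eq:ibvp_GO_solution}, putting $R_\lambda=0$ on $\Sigma$ is harmless.

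\textbf{Computing $\mathcal{L}_{A,q}\Phi_\lambda$.} Conjugating by the phase gives
\[
e^{-\mathrm{i}\varphi_\lambda}\mathcal{L}_{A,q}(e^{\mathrm{i}\varphi_\lambda}m)=\mathcal{L}_{A,q}m+2\mathrm{i}\lambda\,\alpha\cdot(\nabla+\mathrm{i}A)m .
\]
The $\lambda^2$ terms cancel, since $\lambda^2|\alpha|^2$ from $\mathrm{i}\partial_t$ offsets $-\lambda^2|\alpha|^2$ from $\Delta$. The cross terms are $2\mathrm{i}\lambda\alpha\cdot\nabla m$ from $\Delta$ and $-2\lambda(\alpha\cdot A)m$ from $2\mathrm{i}A\cdot\nabla$, which combine into the transport operator above. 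Summing over $j$ with the weights $\lambda^{-j}$, the equations \eqref{eqn: Transport} make the terms telescope:
\begin{itemize}
\item the coefficient of $\lambda^{1}$ is $\alpha\cdot(\nabla+\mathrm{i}A)m_0=0$;
\item the coefficient of $\lambda^{-j+1}$, for $1\le j\le N$, is $\mathcal{L}_{A,q}m_{j-1}+2\mathrm{i}\alpha\cdot(\nabla+\mathrm{i}A)m_j=0$.
\end{itemize}
Only one term survives:
\[
\mathcal{L}_{A,q}\Phi_\lambda=\lambda^{-N}e^{\mathrm{i}\varphi_\lambda}\mathcal{L}_{A,q}m_N .
\]

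\textbf{Estimating the source.} Set $F:=-\lambda^{-N}e^{\mathrm{i}\varphi_\lambda}\mathcal{L}_{A,q}m_N$. It is smooth and lies in $H^1(0,T;L^2(\Omega))$, and $F(0,\cdot)=0$ because all time derivatives of $m_N$ vanish at $t=0$. Proposition \ref{prop:wellposedness_linear_base_step} therefore yields a unique $R_\lambda\in H^{1,2}(Q)$ with $R_\lambda(0,\cdot)=0$ and $\|R_\lambda\|_{H^{1,2}(Q)}\lesssim\|F\|_{H^1(0,T;L^2(\Omega))}$. The only place growth in $\lambda$ enters is $\partial_t$ hitting the phase, which produces a factor $\lambda^2|\alpha|^2$:
\[
\|F\|_{H^1(0,T;L^2)}\le \lambda^{-N}\bigl(\|\mathcal{L}_{A,q}m_N\|_{L^2(Q)}+\|\partial_t\mathcal{L}_{A,q}m_N\|_{L^2(Q)}+\lambda^2|\alpha|^2\|\mathcal{L}_{A,q}m_N\|_{L^2(Q)}\bigr)\lesssim \lambda^{2-N}\|\mathcal{L}_{A,q}m_N\|_{H^1(0,T;L^2)} .
\]
Since $N\ge3$ and $\lambda\ge\lambda_0\ge1$, we have $\lambda^{2-N}\le\lambda^{-1}$, which gives \eqref{eq:R_lambda_H12_estimate}. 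The implicit constant depends on $\alpha$, and on $A$ and $q$ only through the constant in Proposition \ref{prop:wellposedness_linear_base_step}, as the statement claims.

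\textbf{Remaining checks.} The delicate point is bookkeeping rather than analysis. The $\lambda^2$ loss from the time derivative of the phase is exactly why $N\ge3$ is needed. The sign conventions in the conjugation must also match \eqref{eqn: Transport}, including the factor $-2\mathrm{i}$ and the sign of the $A$-term. Finally, $v_\lambda\in H^{1,2}(Q)$ holds because $\Phi_\lambda$ is smooth on $\overline{Q}$.
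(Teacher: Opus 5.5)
Your proposal is correct and follows the paper's proof essentially step for step. You take the amplitudes from Lemma \ref{lem:transport}, use the telescoping identity $\mathcal{L}_{A,q}\bigl(e^{\mathrm{i}\varphi_\lambda}\sum_{j=0}^N\lambda^{-j}m_j\bigr)=\lambda^{-N}e^{\mathrm{i}\varphi_\lambda}\mathcal{L}_{A,q}m_N$, obtain the remainder from Proposition \ref{prop:wellposedness_linear_base_step} with homogeneous initial and boundary data, and absorb the $\lambda^{2}$ loss from $\partial_t$ hitting the phase using $N\ge 3$. You also spell out the conjugation computation and the compatibility condition $F(0,\cdot)=0$, which the paper only asserts.
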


\begin{proof}
The existence of smooth amplitude $m_j$, $0 \le j \le N$, was established in Lemma \ref{lem:transport}. Hence, it suffices to construct a  remainder term $R_{\lambda}\in H^{1,2}( Q)$ satisfying the IBVP
\begin{equation}\label{eq:R_lambda_ibvp}
\begin{cases}
\mathcal{L}_{A,q}R_{\lambda}=-\mathcal{L}_{A,q}\left[e^{\mathrm{i}(\lambda x\cdot\alpha-\lambda^2\lvert\alpha\rvert^2t)}\left(m_{0}(t,x)+\frac{m_{1}(t,x)}{\lambda}+\frac{m_{2}(t,x)}{\lambda^2}+\cdots+\frac{m_{N}(t,x)}{\lambda^N}\right)  \right] 
& \text{ in } Q,
\\
R_{\lambda}(0,\cdot)=0 & \text{ in }\Omega, \\
R_{\lambda}=0 & \text{ on }\Sigma,
\end{cases}
\end{equation}
and the estimate \eqref{eq:R_lambda_H12_estimate}. 
Since the amplitudes $m_j$, $0\le j\le N$, satisfies the respective transport equations in \eqref{eqn: Transport},  we get from direct computations that 
\begin{equation*}
\mathcal{L}_{A,q}\left(
e^{\mathrm{i}(\lambda x\cdot\alpha-\lambda^2|\alpha|^2t)}\left(
m_0+\frac{m_1}{\lambda}
+\frac{m_2}{\lambda^2}
+\cdots
+\frac{m_N}{\lambda^N}
\right)
\right)
=
\frac{1}{\lambda^N}
e^{\mathrm{i}(\lambda x\cdot\alpha-\lambda^2|\alpha|^2t)}
\mathcal{L}_{A,q}m_N.
\end{equation*}
Hence, the problem \eqref{eq:R_lambda_ibvp} reads
\begin{equation}
\label{eq:R_lambda_ibvp_simplified}
\begin{cases}
\mathcal{L}_{A,q}R_\lambda
=
-\frac{1}{\lambda^N}
e^{\mathrm{i}(\lambda x\cdot\alpha-\lambda^2|\alpha|^2t)}
\mathcal{L}_{A,q}m_N
& \text{ in } Q,\\
R_\lambda(0,\cdot)=0
& \text{ in } \Omega, \\
R_{\lambda}=0 &\text{ on }\Sigma.
\end{cases}
\end{equation}

Since $-\frac{1}{\lambda^N}
e^{\mathrm{i}(\lambda x\cdot\alpha-\lambda^2|\alpha|^2t)}
\mathcal{L}_{A,q}m_N \in H^{1}\left(0,T;L^{2}(\Omega)\right)$ and  $\mathcal{L}_{A,q}m_N(0,x)=0$ for $x\in \Omega$,
an application of  Proposition \ref{prop:wellposedness_linear_base_step} yields that the problem \eqref{eq:R_lambda_ibvp_simplified} admits a unique solution  $R_{\lambda}\in H^{1,2}(Q)$.  
Finally, we apply  the estimate \eqref{eq:final_regularity_estimate} to the problem above to obtain the estimate 
\begin{align*}
\|R_{\lambda}\|_{H^{1,2}(Q)}
&\lesssim  \frac{1}{\lambda^{N}}\left\| e^{\mathrm{i}(\lambda x\cdot\alpha-\lambda^2|\alpha|^2t)}\mathcal{L}_{A,q}m_{N} \right\|_{H^{1}\left(0,T;L^{2}(\Omega)\right)}\\
&\lesssim \frac{1}{\lambda^{N-2}}\left\| \mathcal{L}_{A,q}m_{N} \right\|_{H^{1}\left(0,T;L^{2}(\Omega)\right)}\\
&\lesssim \frac{1}{\lambda} \left\| \mathcal{L}_{A,q}m_{N} \right\|_{H^{1}\left(0,T;L^{2}(\Omega)\right)},
\end{align*}
for $N\geq 3$ and $\lambda\geq \lambda_{0}>1$. This completes the proof of Proposition \ref{prop: geom opt sol 1}.
\end{proof}

Our next result gives us the existence of  geometric optics solutions whose remainders decay in the Sobolev space $H^m(Q)$, where $m$ is sufficiently large. Such a decay rate is necessary to recover higher order nonlinearities.  
\begin{cor}
\label{Corollary: geom opt sol 1}
Let $\Omega, Q$, and $\Sigma$ be as in Proposition \ref{prop:wellposedness_linear_base_step}.
Let $\mathcal{L}_{A,q}$ be the linear dynamical Schr\"odinger operator given by \eqref{eq:def_linear} with  $A \in C^{\infty}(\overline{Q};\R^n)$ and  $q \in C^{\infty}(\overline{Q};\C)$. Let $m>n+1$.  Then there exists a constant $\lambda_0 \gg 1$ such that for all $\lambda\geq \lambda_0$, the IBVP \eqref{eq:ibvp_GO_solution} admits a solution $v_{\lambda}\in H^{m+1}(Q)$ of the form
\[
v_{\lambda}(t,x)= e^{\mathrm{i}(\lambda x\cdot\alpha-\lambda^2\lvert\alpha\rvert^2t)}\bigg(m_{0}(t,x)+\frac{m_{1}(t,x)}{\lambda}+\frac{m_{2}(t,x)}{\lambda^2}+\cdots+\frac{m_{N}(t,x)}{\lambda^N}\bigg)+ R_{\lambda}(t,x),
\]
where  $\alpha \in \mathbb{R}^n\setminus\{0\}$, $N\geq 2m+3$, and the amplitudes $m_j$ satisfy the transport equation  \eqref{eqn: Transport} for $0\leq j\leq N$. Moreover, the remainder term $R_{\lambda}\in H^{m+1}(Q)$ satisfies $R_{\lambda}(0,x)=0$ for $x\in\Omega$, and it satisfies the estimate 
\begin{equation}
\label{eq:est_remainder_higher_order}
\|R_{\lambda}\|_{H^{m+1}(Q)}
\lesssim
\frac{1}{\lambda}
\left\|\mathcal{L}_{A,q}m_N\right\|_{H^{m+1,m}(Q)}.
\end{equation}
Here the implicit constant depends only  on $\|A\|_{W^{m+2,\infty}(Q)}$, $\|q\|_{W^{m+1,\infty}(Q)}$, $\alpha$, and $Q$.
\end{cor}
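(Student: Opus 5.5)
We follow the proof of Proposition \ref{prop: geom opt sol 1}, with Lemma \ref{Lemma:wellposedness_linear_induction} in place of Proposition \ref{prop:wellposedness_linear_base_step} to obtain higher regularity. The amplitudes $m_j$, $0\le j\le N$, come from Lemma \ref{lem:transport}. The transport equations \eqref{eqn: Transport} make all terms of order $\lambda^{2-j}$, $j\le N$, cancel, so the remainder must again solve \eqref{eq:R_lambda_ibvp_simplified}, namely
\[
\mathcal{L}_{A,q}R_\lambda=F_\lambda:=-\lambda^{-N}e^{\mathrm{i}(\lambda x\cdot\alpha-\lambda^2|\alpha|^2t)}\mathcal{L}_{A,q}m_N,
\]
with zero initial and lateral boundary data. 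We therefore need to check the hypotheses of Lemma \ref{Lemma:wellposedness_linear_induction} at level $m$ and then bound $F_\lambda$.

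\emph{Compatibility and regularity of the source.} Since $A,q$ are smooth, they lie in $W^{m+2,\infty}(Q)$ and $W^{m+1,\infty}(Q)$ respectively. Lemma \ref{lem:transport} gives $\partial_t^k m_N(0,\cdot)=0$ for all $k$, and $m_N$ vanishes near $t=0$ because of the cutoff $\chi_\varepsilon$. Hence $\mathcal{L}_{A,q}m_N$ also vanishes near $t=0$, so $\partial_t^kF_\lambda(0,\cdot)=0$ for $k=0,\dots,m$. Smoothness gives $F_\lambda\in H^{m+1,m}(Q)$. Lemma \ref{Lemma:wellposedness_linear_induction} then yields a unique $R_\lambda\in H^{m+1,m+2}(Q)$ with $R_\lambda(0,\cdot)=0$ and
\[
\|R_\lambda\|_{H^{m+1,m+2}(Q)}\lesssim \|F_\lambda\|_{H^{m+1,m}(Q)}.
\]

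\emph{Bounding the source.} Differentiate the phase by the Leibniz rule. Each time derivative of the phase produces a factor $\lambda^2|\alpha|^2$ and each spatial derivative a factor $\lambda|\alpha|$. Thus, for $\lambda\ge1$,
\[
\|F_\lambda\|_{H^{m+1}(0,T;L^2(\Omega))}\lesssim \lambda^{2(m+1)-N}\|\mathcal{L}_{A,q}m_N\|_{H^{m+1}(0,T;L^2(\Omega))},
\]
\[
\|F_\lambda\|_{L^2(0,T;H^m(\Omega))}\lesssim\lambda^{m-N}\|\mathcal{L}_{A,q}m_N\|_{L^2(0,T;H^m(\Omega))}.
\]
Since $N\ge 2m+3$, both exponents are at most $-1$, which gives $\lambda^{-1}\|\mathcal{L}_{A,q}m_N\|_{H^{m+1,m}(Q)}$.

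\emph{Passing to the isotropic norm (main obstacle).} The main obstacle is going from $H^{m+1,m+2}(Q)$ to $H^{m+1}(Q)$. The space $H^{m+1,m+2}(Q)$ controls $m+1$ time derivatives and $m+2$ space derivatives separately. Mixed derivatives are controlled by the intermediate-derivative (interpolation) result of Lions–Magenes already used in the proof of Lemma \ref{Lemma:wellposedness_linear_induction}. For $j+|\beta|\le m+1$ with $j\ge 1$, we have $j/(m+1)+|\beta|/(m+2)<1$, so
\[
\|\partial_t^j\partial_x^\beta R_\lambda\|_{L^2(Q)}\lesssim \|R_\lambda\|_{H^{m+1,m+2}(Q)}.
\]
Alternatively, $H^{m+1,m+2}(Q)\subset H^{m+1,m+1}(Q)=H^{m+1}(Q)$ by the characterization recalled in Section \ref{sec:notation}. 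Either way $\|R_\lambda\|_{H^{m+1}(Q)}\lesssim\|R_\lambda\|_{H^{m+1,m+2}(Q)}$, and \eqref{eq:est_remainder_higher_order} follows.

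The principal part of $v_\lambda$ is smooth, so $v_\lambda\in H^{m+1}(Q)$. It solves $\mathcal{L}_{A,q}v_\lambda=0$, and since every $m_j$ vanishes at $t=0$, we get $v_\lambda(0,\cdot)=0$. Choosing $\lambda_0\ge1$ completes the argument.
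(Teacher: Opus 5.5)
Your proof is correct and follows the paper's argument: you solve the remainder problem \eqref{eq:R_lambda_ibvp_simplified} via Lemma \ref{Lemma:wellposedness_linear_induction}, then gain the factor $\lambda^{2m+2-N}\le\lambda^{-1}$ from derivatives falling on the phase. You differ only in the details. You bound $\|F_\lambda\|_{H^{m+1,m}(Q)}$ directly by the Leibniz rule, where the paper invokes the Banach-algebra Lemma \ref{Lemma:banach_algebra_Hm}, which is stated for the isotropic space, on an anisotropic norm. You also check the compatibility conditions $\partial_t^kF_\lambda(0,\cdot)=0$ and the inclusion $H^{m+1,m+2}(Q)\subset H^{m+1,m+1}(Q)=H^{m+1}(Q)$ explicitly, which the paper leaves implicit.
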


\begin{proof}
We only need to seek the remainder $R_\lambda$ that satisfies the IBVP \eqref{eq:R_lambda_ibvp_simplified}. By the Sobolev embedding theorem, we have $ 
\bigcap_{k=0}^{\infty} H^k(Q)
= C^\infty(\overline{Q})
$.
As $-\frac{1}{\lambda^N}
e^{\mathrm{i}(\lambda x\cdot\alpha-\lambda^2|\alpha|^2t)}
\mathcal{L}_{A,q}m_N \in C^{\infty}(\overline{Q})$, it follows immediately that $-\frac{1}{\lambda^N}
e^{\mathrm{i}(\lambda x\cdot\alpha-\lambda^2|\alpha|^2t)}
\mathcal{L}_{A,q}m_N \in H^{m+1,m}(Q)$ for every $m\in\N\cup \{0\}$. Hence, by Lemma \ref{Lemma:wellposedness_linear_induction}, we get $R_{\lambda}\in H^{m+1}(Q)$ and 
\begin{align*}
\|R_{\lambda}\|_{H^{m+1}(Q)}
&\lesssim
\frac{1}{\lambda^N} \left\|
e^{\mathrm{i}(\lambda x\cdot\alpha-\lambda^2|\alpha|^2t)}
\mathcal{L}_{A,q}m_N\right\|_{H^{m+1,m}(Q)}.
\end{align*}

Moreover, since $m>n+1$, in view of Lemma \ref{Lemma:banach_algebra_Hm}, we have
\begin{align*}
\|R_{\lambda}\|_{H^{m+1}(Q)}
&\lesssim  \frac{1}{\lambda^N}  \left\|
e^{\mathrm{i}(\lambda x\cdot\alpha-\lambda^2|\alpha|^2t)}\right\|_{H^{m+1,m}(Q)}
\left\|\mathcal{L}_{A,q}m_N\right\|_{H^{m+1,m}(Q)}
\\
&\lesssim  \frac{1}{\lambda^{N-2m-2}}\left\|
\mathcal{L}_{A,q}m_N\right\|_{H^{m+1,m}(Q)}.
\end{align*}
Finally, the claimed estimate \eqref{eq:est_remainder_higher_order} follows immediately from above, as well as the fact that  $\lambda>1$ and $N-2m-2\geq 1$.
This completes the proof of Corollary \ref{Corollary: geom opt sol 1}.
\end{proof} 

We now turn   to the geometric optics solutions for  the adjoint operator $\mathcal{L}_{A,q}^\ast$ that vanish at $t=T$, where  $\mathcal{L}_{A,q}^\ast$ is given by
\begin{equation}
\label{eq:adjoint-operator}
\mathcal{L}_{A,q}^\ast = \mathrm{i}\partial_t v
+\Delta v
+2\mathrm{i}A(t,x)\cdot\nabla v
+\mathrm{i}\nabla \cdot A(t,x) v
-\left|A(t,x)\right|^{2}v
+\overline{q}(t,x)v.
\end{equation}
In the next two results, we give the existence of geometric optics solutions for $\mathcal{L}_{A,q}^\ast$ vanishing at $t=T$, whose remainder terms also vanish in a suitable sense as $\lambda \to \infty$. The proofs are very similar to those of Proposition \ref{prop: geom opt sol 1} and Corollary \ref{Corollary: geom opt sol 1}, and incorporate a time-reversal argument, which was also utilized in \cite{BhardwajKumarVashisth2026}. Thus, we shall omit the details. 

\begin{lem}
\label{Lemma: geom opt sol_adjoint}
Let $\Omega, Q$, and $\Sigma$ be as in Proposition \ref{prop:wellposedness_linear_base_step}.
Let $\mathcal{L}_{A,q}^\ast$ be the linear dynamical Schr\"odinger adjoint operator given by \eqref{eq:adjoint-operator}, where $A \in C^{\infty}(\overline{Q};\R^n)$ and  $q \in C^{\infty}(\overline{Q};\C)$.  Then there exists a constant $\lambda_0 \gg 1$ such that for all $\lambda\geq \lambda_0$, the problem 
\begin{equation}
\label{eq:adjoint_fbvp}
\begin{cases}
\mathcal{L}_{A,q}^\ast v
=0
& \text{ in } Q,
\\
v(T,\cdot)=0 & \text{ in }\Omega, 
\end{cases}
\end{equation}
admits a solution $u_{\lambda}\in H^{1,2}(Q)$ of  the form
\begin{equation*}
\label{eqn: Ansatz for v_adjoint}
u_{\lambda}(t,x)= e^{\mathrm{i}(\lambda x\cdot\alpha-\lambda^2\lvert\alpha\rvert^2t)}\left(\widetilde{m}_{0}(t,x)+\frac{\widetilde{m}_{1}(t,x)}{\lambda}+\frac{\widetilde{m}_{2}(t,x)}{\lambda^2}+\cdots+\frac{\widetilde{m}_{N}(t,x)}{\lambda^N}\right)+ \tilde{R}_{\lambda}(t,x),
\end{equation*}
where  $\alpha \in \mathbb{R}^n\setminus\{0\}$ and $N\geq3 $. Here the smooth amplitudes $\widetilde{m}_j$ satisfy  $
\widetilde{m}_j(0,x)=\widetilde{m}_j(T,x)=0$ for $x\in \Omega$, $0\leq j\leq N$, as well as the following system of transport equations:  
\begin{equation}\label{eqn: Transport_Adjoint}
\begin{cases}
\alpha\cdot\left(\nabla+\mathrm{i}A\right) \widetilde{m}_{0}=0,\\
-2\mathrm{i}\alpha\cdot\left(\nabla+\mathrm{i}A\right) \widetilde{m}_{1}=\mathcal{L}_{A,\overline{q}}\widetilde m_{0},\\
-2\mathrm{i}\alpha\cdot\left(\nabla+\mathrm{i}A\right) \widetilde{m}_{2}=\mathcal{L}_{A,\overline{q}}\widetilde m_{1},\\
\hspace{2.5cm}   \vdots    \\
-2\mathrm{i}\alpha\cdot\left(\nabla+\mathrm{i}A\right) \widetilde{m}_{N}=\mathcal{L}_{A,\overline{q}}\widetilde m_{N-1}.\\
\end{cases}
\end{equation}
Moreover,  the remainder term $\widetilde{R}_{\lambda}\in H^{1,2}(Q)$ satisfies $\widetilde{R}_{\lambda}(T,x)=0$ for $x\in\Omega$ and the estimate  
\begin{equation}\label{eq:R_adjoint_lambda_H12_estimate}
\|\tilde{R}_{\lambda}\|_{H^{1,2}(Q)}
\lesssim 
\frac{1}{\lambda}\left\| \mathcal{L}_{A,q} ^\ast \widetilde m_{N} \right\|_{H^{1}\left(0,T;L^{2}(\Omega)\right)}.
\end{equation}
\end{lem}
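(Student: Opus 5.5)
The plan is to reduce Lemma \ref{Lemma: geom opt sol_adjoint} to the forward-in-time theory by the substitution $s=T-t$. I would not reverse the phase. The lemma keeps the same phase $e^{\mathrm{i}(\lambda x\cdot\alpha-\lambda^2|\alpha|^2t)}$ as in Proposition \ref{prop: geom opt sol 1}, and $\mathcal{L}^\ast_{A,q}$ as written in \eqref{eq:adjoint-operator} coincides with $\mathcal{L}_{A,\overline{q}}$ from \eqref{eq:def_linear}. So the amplitude construction is literally that of Lemma \ref{lem:transport} with $q$ replaced by $\overline{q}\in C^\infty(\overline{Q};\C)$. Applying it produces smooth $\widetilde m_0,\dots,\widetilde m_N$ solving \eqref{eqn: Transport_Adjoint}. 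Because of the cutoff $\chi_\varepsilon$, all $\partial_t^k\widetilde m_j$ vanish at $t=0$ and at $t=T$. The computation in the proof of Proposition \ref{prop: geom opt sol 1} then gives
\[
\mathcal{L}_{A,\overline q}\Big(e^{\mathrm{i}(\lambda x\cdot\alpha-\lambda^2|\alpha|^2t)}\sum_{j=0}^N\lambda^{-j}\widetilde m_j\Big)=\lambda^{-N}e^{\mathrm{i}(\lambda x\cdot\alpha-\lambda^2|\alpha|^2t)}\mathcal{L}_{A,\overline q}\widetilde m_N=:G_\lambda .
\]
It therefore suffices to solve $\mathcal{L}_{A,\overline q}\widetilde R_\lambda=-G_\lambda$ in $Q$, with $\widetilde R_\lambda=0$ on $\Sigma$ and a \emph{final} condition $\widetilde R_\lambda(T,\cdot)=0$.

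For the remainder I would use time reversal. Set $w(s,x):=\overline{\widetilde R_\lambda(T-s,x)}$ and take the complex conjugate of the equation. The term $\mathrm{i}\partial_t$ becomes $\mathrm{i}\partial_s$, because $\partial_t=-\partial_s$ and conjugation flips $\mathrm{i}$. The term $\Delta$ is unchanged. Since $A$ is real, $2\mathrm{i}A\cdot\nabla$ becomes $-2\mathrm{i}A\cdot\nabla$, $\mathrm{i}\nabla\cdot A$ becomes $-\mathrm{i}\nabla\cdot A$, $|A|^2$ is unchanged, and $\overline q$ becomes $q$. All coefficients are evaluated at $T-s$. Thus $w$ solves
\[
\mathcal{L}_{B,p}w=F,\qquad B(s,x):=-A(T-s,x),\quad p(s,x):=q(T-s,x),\quad F(s,x):=-\overline{G_\lambda(T-s,x)},
\]
with $w=0$ on $\Sigma$ and $w(0,\cdot)=0$. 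Here $B\in C^\infty(\overline Q;\R^n)$, $p\in C^\infty(\overline Q;\C)$, and $F\in H^1(0,T;L^2(\Omega))$. Moreover $F(0,\cdot)=0$, since $\widetilde m_N$ and all its time derivatives vanish at $t=T$, so $\mathcal{L}_{A,\overline q}\widetilde m_N(T,\cdot)=0$.

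Proposition \ref{prop:wellposedness_linear_base_step} then gives a unique $w\in H^{1,2}(Q)$ with $\|w\|_{H^{1,2}(Q)}\lesssim\|F\|_{H^1(0,T;L^2(\Omega))}$. The implicit constant depends on $\|B\|_{W^{2,\infty}}=\|A\|_{W^{2,\infty}}$ and $\|p\|_{W^{1,\infty}}=\|q\|_{W^{1,\infty}}$. Reversing the change of variables gives $\widetilde R_\lambda\in H^{1,2}(Q)$ with $\widetilde R_\lambda(T,\cdot)=0$. The map $s\mapsto T-s$ combined with conjugation is an isometry of $H^{1,2}(Q)$ and of $H^1(0,T;L^2(\Omega))$, so the bound transfers directly.

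The decay estimate is the same as before. One time derivative of the phase costs $\lambda^2|\alpha|^2$, so $\|G_\lambda\|_{H^1(0,T;L^2)}\lesssim\lambda^{2-N}\|\mathcal{L}_{A,\overline q}\widetilde m_N\|_{H^1(0,T;L^2)}$. For $N\ge3$ and $\lambda\ge\lambda_0>1$ this is at most $\lambda^{-1}$ times that norm, which is \eqref{eq:R_adjoint_lambda_H12_estimate}, recalling $\mathcal{L}^\ast_{A,q}=\mathcal{L}_{A,\overline q}$.

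The only step needing genuine care is the sign bookkeeping under conjugation and time reversal. One must check that the reversed operator is again of the form \eqref{eq:def_linear}, with a \emph{real} magnetic potential and a complex potential of the same regularity, so that Proposition \ref{prop:wellposedness_linear_base_step} applies. One must also check that the compatibility condition $F(0,\cdot)=0$ is exactly what the vanishing of the amplitudes near $t=T$ provides. Everything else is a verbatim repetition of the proof of Proposition \ref{prop: geom opt sol 1}.
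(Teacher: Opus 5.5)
Your proposal is correct and follows the route the paper indicates but omits. You build the amplitudes via Lemma \ref{lem:transport} with $q$ replaced by $\overline{q}$, using $\mathcal{L}^\ast_{A,q}=\mathcal{L}_{A,\overline{q}}$, and then handle the remainder by time reversal with conjugation, $w(s,x)=\overline{\widetilde R_\lambda(T-s,x)}$, which yields a forward problem for $\mathcal{L}_{B,p}$ with real $B=-A(T-\cdot)$ to which Proposition \ref{prop:wellposedness_linear_base_step} applies. Your sign bookkeeping and your check that $F(0,\cdot)=0$ follows from $\widetilde m_N$ vanishing near $t=T$ are exactly the details the paper leaves to the reader.
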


\begin{cor}
\label{Corollary:adjoint geom opt sol 1}
Let $\Omega, Q$, and $\Sigma$ be as in Proposition \ref{prop:wellposedness_linear_base_step}.
Let $\mathcal{L}_{A,q}^\ast$ be the linear dynamical Schr\"odinger adjoint operator given by \eqref{eq:adjoint-operator}, where   $A \in C^{\infty}(\overline{Q};\R^n)$ and  $q \in C^{\infty}(\overline{Q};\C)$. Let $m>n+1$.  Then there exists a constant $\lambda_0 \gg 1$ such that for all $\lambda\geq \lambda_0$, the problem \eqref{eq:adjoint_fbvp} has a solution $u_{\lambda}\in H^{m+1}(Q)$ of the form
\begin{equation*}
\label{eqn: Ansatz for v_copy}
u_{\lambda}(t,x)= e^{\mathrm{i}(\lambda x\cdot\alpha-\lambda^2\lvert\alpha\rvert^2t)}\left(\widetilde{m}_{0}(t,x)+\frac{\widetilde{m}_{1}(t,x)}{\lambda}+\frac{\widetilde{m}_{2}(t,x)}{\lambda^2}+\cdots+\frac{\widetilde{m}_{N}(t,x)}{\lambda^N}\right)+ \widetilde{R}_{\lambda}(t,x)
\end{equation*}
with  $\alpha \in \mathbb{R}^n\setminus\{0\}$ and  $N\geq 2m+3 $. Here the smooth amplitudes $\widetilde{m}_j$ satisfy the transport equation  \eqref{eqn: Transport_Adjoint} for $0\leq j\leq N$, and the remainder term $\widetilde{R}_{\lambda}\in H^{m+1}(Q)$ satisfies $\widetilde{R}_{\lambda}(T,x)=0$ for $x\in\Omega$, as well as the estimate
\begin{equation}\label{eq:R_lambda_Hm_estimate_adjoint}
\|\widetilde{R}_{\lambda}\|_{H^{m+1}(Q)}
\lesssim
\frac{1}{\lambda}
\left\|\mathcal{L}_{A,q}^\ast \widetilde m_{N}\right\|_{H^{m+1,m}(Q)}.
\end{equation}
\end{cor}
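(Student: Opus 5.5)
The plan is to deduce Corollary \ref{Corollary:adjoint geom opt sol 1} from Corollary \ref{Corollary: geom opt sol 1} by reversing time, the same device used for Lemma \ref{Lemma: geom opt sol_adjoint}. A naive substitution $t\mapsto T-t$ turns $\mathrm{i}\partial_t$ into $-\mathrm{i}\partial_t$, so the reversal must be combined with complex conjugation. For $v$ defined on $Q$, set $w(t,x):=\overline{v(T-t,x)}$. A direct computation from \eqref{eq:adjoint-operator} gives
\[
\overline{(\mathcal{L}^\ast_{A,q}v)(T-t,x)}=\mathrm{i}\partial_t w+\Delta w-2\mathrm{i}A^\flat\cdot\nabla w-\mathrm{i}(\nabla\cdot A^\flat)w-|A^\flat|^2w+q^\flat w ,
\]
where $A^\flat(t,x):=A(T-t,x)$ and $q^\flat(t,x):=q(T-t,x)$. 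This is exactly $\mathcal{L}_{-A^\flat,q^\flat}$ in the notation \eqref{eq:def_linear}, with a smooth real magnetic potential and a smooth complex electric potential, and with the same $W^{k,\infty}$ norms as $A$ and $q$. Moreover, $v(T,\cdot)=0$ if and only if $w(0,\cdot)=0$. The problem \eqref{eq:adjoint_fbvp} is therefore equivalent to \eqref{eq:ibvp_GO_solution} for the operator $\mathcal{L}_{-A^\flat,q^\flat}$.

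Next I apply Corollary \ref{Corollary: geom opt sol 1} to $\mathcal{L}_{-A^\flat,q^\flat}$ with direction $-\alpha$ in place of $\alpha$. This produces, for $\lambda\ge\lambda_0$,
\[
w_\lambda=e^{\mathrm{i}(-\lambda x\cdot\alpha-\lambda^2|\alpha|^2t)}\sum_{j=0}^N\lambda^{-j}n_j+R^\flat_\lambda ,
\]
where $R^\flat_\lambda(0,\cdot)=0$, the remainder satisfies $\|R^\flat_\lambda\|_{H^{m+1}(Q)}\lesssim \lambda^{-1}\|\mathcal{L}_{-A^\flat,q^\flat}n_N\|_{H^{m+1,m}(Q)}$, and the $n_j$ solve \eqref{eqn: Transport} with $(A,q,\alpha)$ replaced by $(-A^\flat,q^\flat,-\alpha)$. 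Undoing the change of variables, $u_\lambda(t,x):=\overline{w_\lambda(T-t,x)}$ solves \eqref{eq:adjoint_fbvp}. Its phase is
\[
\overline{e^{\mathrm{i}(-\lambda x\cdot\alpha-\lambda^2|\alpha|^2(T-t))}}=e^{\mathrm{i}\lambda^2|\alpha|^2T}\,e^{\mathrm{i}(\lambda x\cdot\alpha-\lambda^2|\alpha|^2t)} .
\]
I absorb the unimodular constant $e^{\mathrm{i}\lambda^2|\alpha|^2T}$ into the amplitudes by setting $\widetilde m_j(t,x):=e^{\mathrm{i}\lambda^2|\alpha|^2T}\,\overline{n_j(T-t,x)}$ and $\widetilde R_\lambda(t,x):=\overline{R^\flat_\lambda(T-t,x)}$. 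Then $\widetilde R_\lambda(T,\cdot)=0$. The map $g\mapsto\overline{g(T-\cdot,\cdot)}$ is an isometry of $H^{m+1}(Q)$ and of $H^{m+1,m}(Q)$, and it intertwines $\mathcal{L}_{-A^\flat,q^\flat}$ with $\mathcal{L}^\ast_{A,q}$ up to conjugation. Hence $\|\mathcal{L}_{-A^\flat,q^\flat}n_N\|_{H^{m+1,m}(Q)}=\|\mathcal{L}^\ast_{A,q}\widetilde m_N\|_{H^{m+1,m}(Q)}$, which gives \eqref{eq:R_lambda_Hm_estimate_adjoint}.

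The step I expect to need the most care is checking that the $\widetilde m_j$ satisfy \eqref{eqn: Transport_Adjoint}. For the first equation: conjugating $(-\alpha)\cdot(\nabla-\mathrm{i}A^\flat)n_0=0$ and reversing time gives $\alpha\cdot(\nabla+\mathrm{i}A)\widetilde m_0=0$, and the constant phase factor is harmless.

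For $j\ge1$ I start from $-2\mathrm{i}(-\alpha)\cdot(\nabla-\mathrm{i}A^\flat)n_j=\mathcal{L}_{-A^\flat,q^\flat}n_{j-1}$ and conjugate. The left side becomes $-2\mathrm{i}\alpha\cdot(\nabla+\mathrm{i}A)\widetilde m_j$, with the signs matching. The right side becomes $\mathcal{L}^\ast_{A,q}\widetilde m_{j-1}$ by the intertwining identity above. From \eqref{eq:adjoint-operator}, $\mathcal{L}^\ast_{A,q}$ coincides with $\mathcal{L}_{A,\overline q}$, so this is exactly \eqref{eqn: Transport_Adjoint}. I would double-check the sign bookkeeping of the $\mathrm{i}\partial_t$ and first-order terms under conjugation. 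If a residual sign mismatch appears, the fallback is to choose the direction $\pm\alpha$ accordingly; both choices are permitted by Corollary \ref{Corollary: geom opt sol 1}.

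Finally, the vanishing of all time derivatives of $\widetilde m_j$ at $t=0$ and $t=T$ is inherited from Lemma \ref{lem:transport}. The constants depend only on $\|A\|_{W^{m+2,\infty}(Q)}$, $\|q\|_{W^{m+1,\infty}(Q)}$, $\alpha$ and $Q$, because these norms are preserved by time reversal.
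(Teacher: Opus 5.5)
Your proposal is correct and is essentially the time-reversal argument the paper points to (the paper omits the details): combining conjugation with $t\mapsto T-t$ turns $\mathcal{L}^\ast_{A,q}$ into $\mathcal{L}_{-A^\flat,q^\flat}$, and with direction $-\alpha$ the transport system for the $n_j$ becomes exactly \eqref{eqn: Transport_Adjoint}, so the signs check out and no fallback is needed. One cosmetic improvement: rather than absorbing $e^{\mathrm{i}\lambda^2|\alpha|^2T}$ into the amplitudes, which makes them $\lambda$-dependent and is awkward for later uses such as $a_3\to m_0^3$ in $H^{m+1}(Q)$, multiply the whole solution by $e^{-\mathrm{i}\lambda^2|\alpha|^2T}$; linearity permits this, and the remainder estimate is unchanged.
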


\section{Proof of Theorem \ref{thm:main_result}}
\label{sec:proof_full_data}

In this section, we establish the uniqueness of the linear potential $\cA$ and nonlinear potential $q$ given by \eqref{eq:expansion_q} from the full DN map \eqref{eq:def_DN_map}. The proof is split into several steps. First, in Subsection \ref{subsec:first_order_linearization}, we utilize the first order linearization, as well as the geometric optics solutions from Proposition \ref{prop: geom opt sol 1} and Lemma \ref{Lemma: geom opt sol_adjoint}, to recover the coefficients $\mathcal{A}$ and $q_1$. Due to the gauge invariance, we need the assumption $\nabla\cdot \mathcal{A}^{(1)}= \nabla\cdot \mathcal{A}^{(2)}$ in $Q$ to achieve the uniqueness of $\mathcal{A}$. Subsequently, we apply the second order linearization, in conjunction with the geometric   optics  solutions given in Corollaries \ref{Corollary: geom opt sol 1} and \ref{Corollary:adjoint geom opt sol 1} to obtain the uniqueness of  $q_2$ in Subsection \ref{sec:second_order_linearization}. Finally,  the recovery of higher order nonlinearities is established via the higher order linearization in Subsection \ref{subsec:higher_order_linearization}. 

\subsection{Recovery of $\mathcal{A}$ and $q_1$}
\label{subsec:first_order_linearization}
Let us start by establishing the uniqueness of the vector potential $\mathcal{A}$ and the scalar potential $q_{1}$ from the DN map. We shall use the linearization method, which has been extensively studied in the literature, see for instance \cite{BhardwajKumarVashisth2026,LassasLiimatainenLinSalo2021} and the references therein.

Let $\left(\mathcal{A}^{(j)},q^{(j)} \right)\in C^{\infty}(\overline Q;\R^n)\times C^{\infty}(\overline Q\times \C;\C)$, $j=1,2$, and let $f\in {H^{m+\frac52, m+\frac52}(\Sigma)}$  be such that $\partial^{k}_{t}f(0,\cdot)|_{\partial\Omega}=0$ for all $0\leq k \leq m+1$. Suppose that $\varepsilon_{0}>0$ is a constant as in Theorem \ref{thm:wellposedness}. Let $\varepsilon>0$ be   sufficiently small such that $\left\|\varepsilon f\right\|_{H^{m+\frac52, m+\frac52}(\Sigma)} \leq \varepsilon_{0}$, and let the function  $u_j (t,x; \varepsilon)$ be a solution to the nonlinear  IBVP
\[
\begin{cases}
\mathcal{L}_{\mathcal{A}^{(j)},q^{(j)}} u_j=0 & \text{ in } Q ,
\\
u_j=\varepsilon f& \text{ on } \Sigma,
\\
u_j(0, \cdot) =0 &\text{ in } \Omega,
\end{cases}
\]
where the operator $\mathcal{L}_{\mathcal{A}^{(j)},q^{(j)}}$ is given by \eqref{eq:magnetic_operator} with potentials $\mathcal{A}^{(j)}$ and $q^{(j)}$. Then it follows from Theorem  \ref{thm:wellposedness} that  $u_j (\cdot,\cdot; \varepsilon)\in H^{m+1,m+2}(Q)$ and satisfies the estimate
\[
\|u_j\|_{H^{m+1,m+2}(Q)}  
\lesssim
\|\varepsilon f\|_{H^{m+\frac52, m+\frac52}(\Sigma)}.
\]
Let us note that $u_{j}(t,x; 0)=0$. 

Next, we apply the first order linearization to the problem above. Using that $u_{j}(t,x; 0)=0$, we see that all terms containing positive powers of $u_j$ or $\nabla u_j$ vanish when $\varepsilon=0$. Therefore,  the function $v_{j}:= \p_{\varepsilon} u_j(t,x;\varepsilon)|_{\varepsilon=0}$ satisfies the   IBVP
\[
\begin{cases}
\mathcal{L}_{\mathcal{A}^{(j)},q_{1}^{(j)}} v_j=0 & \text{ in } Q ,
\\
v_j= f& \text{ on } \Sigma,
\\
v_j(0, \cdot) =0 &\text{ in } \Omega,
\end{cases}
\]
where $\mathcal{L}_{\mathcal{A}^{(j)},q_{1}^{(j)}}$ is a linear operator given by \eqref{eq:def_linear} with potentials $\mathcal{A}^{(j)}$ and $q_{1}^{(j)}$. 

Let us define the DN map $\mathcal{N}_{\mathcal{A}^{(j)},q_{1}^{(j)}}$ for the operator $\mathcal{L}_{\mathcal{A}^{(j)},q_{1}^{(j)}}$ by the formula
\[
\mathcal{N}_{\mathcal{A}^{(j)},q_{1}^{(j)}}(f) = (\p_\nu + \mathrm{i}\mathcal{A}^{(j)}\cdot\nu)v_j\big|_{\Sigma}. 
\]
Due to the assumption  $\Lambda_{\mathcal{A}^{(1)},q^{(1)}}(\varepsilon f) = \Lambda_{\mathcal{A}^{(2)},q^{(2)}}(\varepsilon f)$, applying $\p_\varepsilon|_{\varepsilon=0}$ on both sides gives us $\mathcal{N}_{\mathcal{A}^{(1)},q_{1}^{(1)}} = \mathcal{N}_{\mathcal{A}^{(2)},q_{1}^{(2)}}$.

In what follows, we shall denote $\mathcal{A}:= \mathcal{A}^{(2)} -\mathcal{A}^{(1)}$, and $q_{1}:= q_{1}^{(2)}-q_{1}^{(1)}$. Then it follows from direct computations that the function $v:= v_1-v_2$ satisfies the following  IBVP:
\begin{equation}
\label{eq:linearized_difference_v}
\begin{cases}
\mathcal{L}_{\mathcal{A}^{(1)},q_{1}^{(1)}} v
=
\left(
2\mathrm{i}\mathcal{A}\cdot \nabla
+\mathrm{i}\nabla \cdot \mathcal{A}
-|\mathcal{A}^{(2)}|^2
+|\mathcal{A}^{(1)}|^2
+q_{1}
\right)v_{2}
& \text{ in } Q,
\\
v=0 & \text{ on } \Sigma,
\\
v(0,\cdot)=0 & \text{ in } \Omega.
\end{cases}
\end{equation}
Let $v_{1}$ be a solution to the problem
\begin{equation}
\label{eq:adjoint_problem_first_linearization}
\begin{cases}
\mathcal{L}_{\mathcal{A}^{(1)},\overline{q_1^{(1)}}}v_1= 0 & \text{ in } Q ,
\\
v_{1}(T, \cdot) =0 &\text{ in } \Omega.
\end{cases}
\end{equation}
Multiplying the first equation of \eqref{eq:linearized_difference_v} by $\overline{v}_{1}$ and integrating over $Q$, we get
\begin{equation}
\label{eq:integral_identity_before_parts}
\int_{Q}\mathcal{L}_{\mathcal{A}^{(1)},q_{1}^{(1)}} v \overline{v}_{1} dx dt =  \int_{Q}\left(  2\mathrm{i}\mathcal{A}\cdot \nabla + \mathrm{i}\nabla \cdot \mathcal{A} -|\mathcal{A}^{(2)}|^2+|\mathcal{A}^{(1)}|^2 + q_{1} \right)v_{2} \overline{v}_{1} dx dt.
\end{equation}
In view of the homogeneous initial and boundary conditions of \eqref{eq:linearized_difference_v}, the problem \eqref{eq:adjoint_problem_first_linearization}, and that $\mathcal{N}_{\mathcal{A}^{(1)},q_{1}^{(1)}} = \mathcal{N}_{\mathcal{A}^{(2)},q_{1}^{(2)}}$,
we integrate by parts on the left-hand side of \eqref{eq:integral_identity_before_parts}
to get
\begin{equation}
\label{eq:integral_identity_A0_q1}
\int_Q
\left(
2\mathrm{i}\mathcal{A}\cdot \nabla v_2
+
\mathrm{i}(\nabla\cdot \mathcal{A})v_2
-
\big(|\mathcal{A}^{(2)}|^2-|\mathcal{A}^{(1)}|^2\big)v_2
+
q_1v_2
\right)
\overline{v}_1 dx dt
=0 .
\end{equation}

From Proposition \ref{prop: geom opt sol 1} and Lemma \ref{Lemma: geom opt sol_adjoint}, we get geometric optics solutions 
\begin{equation}
\label{eq:GO_solution_v_lambda}
v_2(t,x)
=
e^{\mathrm{i}(\lambda x\cdot \alpha-\lambda^2|\alpha|^2t)}
\left(
m_0(t,x)+\frac{m_1(t,x)}{\lambda}
+\frac{m_2(t,x)}{\lambda^2}
+\cdots
+\frac{m_N(t,x)}{\lambda^N}\right)
+R_\lambda(t,x),
\end{equation}
and
\begin{equation}
\label{eq:GO_solution_u_lambda_conjugate}
v_1(t,x)
=
e^{\mathrm{i}(\lambda x\cdot \alpha-\lambda^2|\alpha|^2t)}
\left(
\widetilde m_0(t,x)
+\frac{\widetilde m_1(t,x)}{\lambda}
+\frac{\widetilde m_2(t,x)}{\lambda^2}
+\cdots
+\frac{\widetilde m_N(t,x)}{\lambda^N}\right)
+\widetilde R_\lambda(t,x).
\end{equation}
Here the smooth amplitudes $m_j$ and $\tilde m_j$, $0 \le j\le N$, satisfy the transport equations with $A=\mathcal{A}^{(2)}$, $q=q_{1}^{(2)}$ in \eqref{eqn: Transport}, and with $A=\mathcal{A}^{(1)}$, $q=q_{1}^{(1)}$ in \eqref{eqn: Transport_Adjoint}, respectively, and the remainders $R_\lambda, \tilde R_\lambda \in H^{1,2}(Q)$ satisfy the estimates \eqref{eq:R_lambda_H12_estimate} and \eqref{eq:R_adjoint_lambda_H12_estimate}, respectively.

To simplify notation, in what follows, let us denote the phase function $\Phi_\lambda(t,x):=\lambda x\cdot\alpha-\lambda^2|\alpha|^2t$, 
and the sum of smooth amplitudes as 
\[
M_\lambda(t,x)
:=
\sum_{k=0}^{N}\frac{m_k(t,x)}{\lambda^k}
\quad\text{ and }\quad
\tilde M_\lambda(t,x)
:=
\sum_{k=0}^{N}\frac{\tilde m_k(t,x)}{\lambda^k}.
\]
Then the geometric optics solutions \eqref{eq:GO_solution_v_lambda} and \eqref{eq:GO_solution_u_lambda_conjugate} can be written as
\[
v_2=e^{\mathrm{i}\Phi_\lambda}M_\lambda+R_\lambda
\qquad \text{ and } \qquad
v_1
=
e^{\mathrm{i}\Phi_\lambda}\tilde M_\lambda
+
\widetilde R_\lambda.
\]
Furthermore,  we set
\[
W_{\mathrm{rem}}(t,x)
:=
\mathrm{i}\nabla\cdot \mathcal{A}(t,x)
-
|\mathcal{A}^{(2)}(t,x)|^2
+
|\mathcal{A}^{(1)}(t,x)|^2
+
q_1(t,x).
\]
Then the integral identity \eqref{eq:integral_identity_A0_q1} becomes
\begin{equation}
\label{eq:int_id_A0q1_new_notation}
\int_Q
\left(
2\mathrm{i}\mathcal{A}\cdot \nabla v_2
+
W_{\mathrm{rem}}v_2
\right)
\overline{v}_1 dx dt=0.
\end{equation}
On the other hand,  applying the product rule, we have
\[
\nabla v_2
=
e^{\mathrm{i} \Phi_\lambda(t,x)}
\left[
\mathrm{i}\lambda \alpha
M_\lambda (t,x)
+
\nabla
M_\lambda (t,x)
\right]
+
\nabla R_\lambda(t,x).
\]
Therefore, by substituting the previous expressions for $v_1,$ $v_2$, and $\nabla v_2$ into \eqref{eq:int_id_A0q1_new_notation}, we obtain
\begin{equation}
\label{eq:expanded_GO_identity}
\begin{aligned}
0
= &
-2\lambda
\int_Q
\mathcal{A}\cdot\alpha 
M_\lambda\overline{\tilde M_\lambda}
dx dt
+
2\mathrm{i}
\int_Q
\mathcal{A}\cdot\nabla M_\lambda 
\overline{\tilde M_\lambda}
dx dt
-2\lambda
\int_Q
e^{\mathrm{i}\Phi_\lambda}
\mathcal{A}\cdot\alpha 
M_\lambda\overline{\widetilde R_\lambda}
dx dt
\\
&+
2\mathrm{i}
\int_Q
e^{\mathrm{i}\Phi_\lambda}
\mathcal{A}\cdot \nabla M_\lambda 
\overline{\widetilde R_\lambda}
dx dt  
+
2\mathrm{i}
\int_Q
e^{-\mathrm{i}\Phi_\lambda}
\mathcal{A}\cdot \overline{\tilde M_\lambda} \nabla R_\lambda 
dx dt
+
2\mathrm{i}
\int_Q
\mathcal{A}\cdot  \nabla R_\lambda 
\overline{\widetilde R_\lambda}
dx dt
\\
&+
\int_Q
W_{\mathrm{rem}} M_\lambda\overline{\tilde M_\lambda}
dx dt
+
\int_Q
e^{\mathrm{i}\Phi_\lambda}
W_{\mathrm{rem}} M_\lambda\overline{\widetilde R_\lambda}
dx dt  
+
\int_Q
e^{-\mathrm{i}\Phi_\lambda}
W_{\mathrm{rem}} R_\lambda\overline{\tilde M_\lambda}
dx dt
\\
& +
\int_Q
W_{\mathrm{rem}} R_\lambda\overline{\widetilde R_\lambda}
dx dt .
\end{aligned}
\end{equation}

We now isolate the leading term in $M_\lambda\overline{\tilde M_\lambda}$. To this end, since $M_\lambda\overline{\tilde M_\lambda}
=
m_0\overline{\widetilde m_0}
+
\frac{1}{\lambda}G_\lambda,$ where
\[
G_\lambda
:=
m_1\overline{\widetilde m_0}
+
m_0\overline{\widetilde m_1}
+
\sum_{\substack{0\leq j,k\leq N,   j+k\geq 2}}
\frac{m_j\overline{\widetilde m_k}}{\lambda^{j+k-1}},
\]
we get
\[
-2\lambda
\int_Q
\mathcal{A}\cdot\alpha 
M_\lambda\overline{\tilde M_\lambda}
dx dt
=
-2\lambda
\int_Q
\mathcal{A}\cdot\alpha 
m_0\overline{\widetilde m_0}
dx dt
-
2
\int_Q
\mathcal{A}\cdot\alpha 
G_\lambda
dx dt .
\]
Therefore, \eqref{eq:expanded_GO_identity} can be written as
\begin{equation}\label{eq:principal_plus_error}
-2\lambda
\int_Q
\mathcal{A}\cdot\alpha 
m_0\overline{\widetilde m_0}
dx dt
+
\mathcal E_\lambda=0 ,
\end{equation}
where
\begin{align*}
\mathcal E_\lambda
:= &
-2
\int_Q
\mathcal{A}\cdot\alpha 
G_\lambda
dx dt
+
2\mathrm{i}
\int_Q
\mathcal{A}\cdot\nabla M_\lambda 
\overline{\tilde M_\lambda}
dx dt
-2\lambda
\int_Q
e^{\mathrm{i}\Phi_\lambda}
\mathcal{A}\cdot\alpha 
M_\lambda\overline{\widetilde R_\lambda}
dx dt
\\
&+
2\mathrm{i}
\int_Q
e^{\mathrm{i}\Phi_\lambda}
\mathcal{A}\cdot\nabla M_\lambda 
\overline{\widetilde R_\lambda}
dx dt  
+
2\mathrm{i}
\int_Q
e^{-\mathrm{i}\Phi_\lambda}
\mathcal{A}\cdot\nabla R_\lambda 
\overline{\tilde M_\lambda}
dx dt
+
2\mathrm{i}
\int_Q
\mathcal{A}\cdot\nabla R_\lambda 
\overline{\widetilde R_\lambda}
dx dt
\\
&
+
\int_Q
W_{\mathrm{rem}} M_\lambda\overline{\tilde M_\lambda}
dx dt
+
\int_Q
e^{\mathrm{i}\Phi_\lambda}
W_{\mathrm{rem}} M_\lambda\overline{\widetilde R_\lambda}
dx dt
+
\int_Q
e^{-\mathrm{i}\Phi_\lambda}
W_{\mathrm{rem}} R_\lambda\overline{\tilde M_\lambda}
dx dt
\\
&	+
\int_Q
W_{\mathrm{rem}} R_\lambda\overline{\widetilde R_\lambda}
dx dt .
\end{align*}

We next show that there exists a constant $\lambda_0$ such that 
\begin{equation}
\label{eq:E_lambda_uniform_bound}
|\mathcal E_\lambda|\leq C
\end{equation}
for all $\lambda\geq \lambda_0$, where $C>0$  is independent of $\lambda$. This requires us to estimate each term in $\mathcal{E}_\lambda$. To that end, we deduce from the inequality \eqref{eq:Linf_L2_product} and the Cauchy-Schwarz inequality that 
\begin{align*}
2
\left|
\int_Q
\mathcal{A}\cdot\alpha 
G_\lambda
dx dt
\right|
\le  & 
2|\alpha| 
\|\mathcal{A}\|_{L^2(Q)} |Q|^{1/2}
\left[
\|m_1\|_{L^\infty(Q)}
\|\widetilde m_0\|_{L^\infty(Q)}
+
\|m_0\|_{L^\infty(Q)}
\|\widetilde m_1\|_{L^\infty(Q)}
\right. 
\\
& \left. 
+
\sum_{\substack{0\leq j,k\leq N,    j+k\geq 2}}
\frac{
\|m_j\|_{L^\infty(Q)}
\|\widetilde m_k\|_{L^\infty(Q)}
}{\lambda^{j+k-1}}
\right].
\end{align*}
Since $\lambda\geq 1$,  there exists a constant $C>0$, which is independent of $\lambda$,  such that 
\begin{equation}
\label{eq:E_lambda_estimate_1}
2
\left|
\int_Q
\mathcal{A}\cdot\alpha 
G_\lambda
dx dt
\right|
\leq \frac{C}{10}.
\end{equation}
Similarly, we obtain the following two estimates:
\begin{equation}
\label{eq:E_lambda_estimate_2}
\begin{aligned}
2
\left|
\int_Q
\mathcal{A}\cdot\nabla M_\lambda 
\overline{\tilde M_\lambda}
dx dt
\right|
&\le 
2
\|\mathcal{A}\|_{L^2(Q)}
\sum_{k=0}^{N}\sum_{\ell=0}^{N}
\frac{
\|\nabla m_k \overline{\widetilde m_\ell}\|_{L^2(Q)}
}{\lambda^{k+\ell}}
\\
& \le 
2|Q|^{1/2}
\|\mathcal{A}\|_{L^2(Q)}
\sum_{k=0}^{N}\sum_{\ell=0}^{N}
\frac{
\|\nabla m_k\|_{L^\infty(Q)}
\|\widetilde m_\ell\|_{L^\infty(Q)}
}{\lambda^{k+\ell}}\leq \frac{C}{10},
\end{aligned}
\end{equation}
and
\begin{equation}
\label{eq:E_lambda_estimate_3}
\left|
\int_Q
W_{\mathrm{rem}}
M_\lambda\overline{\tilde M_\lambda}
dx dt
\right|
\le
|Q|^{1/2}
\|W_{\mathrm{rem}}\|_{L^2(Q)}
\left(
\sum_{k=0}^{N}
\frac{\|m_k\|_{L^\infty(Q)}}{\lambda^k}
\right)
\left(
\sum_{\ell=0}^{N}
\frac{\|\widetilde m_\ell\|_{L^\infty(Q)}}{\lambda^\ell}
\right) \leq \frac{C}{10}.
\end{equation}

We next estimate the term containing the   remainder $\widetilde R_\lambda$. To achieve this, we utilize the Cauchy-Schwarz inequality, the facts that $|e^{\mathrm{i}\Phi_\lambda}|=1$ and $\lambda \ge 1$,  along with the estimates \eqref{eq:Linf_L2_product} and  \eqref{eq:R_adjoint_lambda_H12_estimate}, to get
\begin{equation}
\label{eq:adjoint_remainder_cross_term_estimate}
\begin{aligned}
2\lambda
\left|
\int_Q
e^{\mathrm{i}\Phi_\lambda}
\mathcal{A}\cdot\alpha 
M_\lambda\overline{\widetilde R_\lambda}
dx dt
\right|
&\leq
2\lambda |\alpha|
\|\mathcal{A}\|_{L^2(Q)}
\left(
\sum_{k=0}^{N}\frac{\|m_k\|_{L^\infty(Q)}}{\lambda^k}
\right)
\|\widetilde R_\lambda\|_{L^2(Q)}
\\
&\lesssim |\alpha|
\|\mathcal{A}\|_{L^2(Q)}
\left(
\sum_{k=0}^{N}\frac{\|m_k\|_{L^\infty(Q)}}{\lambda^k}
\right)
\|\mathcal L_{\mathcal{A}^{(1)},q^{(1)}}^\ast \widetilde m_N\|_{L^2(Q)}
\lambda^{1-N} \leq \frac{C}{10}.
\end{aligned}
\end{equation}
The same arguments also yield that
\begin{equation}
\label{eq:E_lambda_estimate_5}
\begin{aligned}
2
\left|
\int_Q
e^{\mathrm{i}\Phi_\lambda}
\mathcal{A}\cdot\nabla M_\lambda 
\overline{\widetilde R_\lambda}
dx dt
\right|
&\lesssim
\|\mathcal{A}\|_{L^2(Q)}
\left(
\sum_{k=0}^{N}
\frac{\|\nabla m_k\|_{L^\infty(Q)}}{\lambda^k}
\right)
\|\mathcal L_{\mathcal{A}^{(1)},q^{(1)}}^\ast \widetilde m_N\|_{L^2(Q)}
\lambda^{-N}\leq \frac{C}{10}.
\end{aligned}
\end{equation}

Turning our attention to the terms involving \(\nabla R_\lambda\), it again  follows from the Cauchy-Schwarz inequality, the fact that $|e^{-\mathrm{i}\Phi_\lambda}|=1$, $\lambda \ge 1$, along with the estimates \eqref{eq:Linf_L2_product} and \eqref{eq:R_lambda_H12_estimate}, that
\begin{equation}
\label{eq:E_lambda_estimate_6}
\begin{aligned}
2
\left|
\int_Q
e^{-\mathrm{i}\Phi_\lambda}
\mathcal{A}\cdot\nabla R_\lambda 
\overline{\tilde M_\lambda}
dx dt
\right|
& \leq
2
\|\mathcal{A}\|_{L^2(Q)}
\left(
\sum_{\ell=0}^{N}
\frac{\|\widetilde m_\ell\|_{L^\infty(Q)}}{\lambda^\ell}
\right)
\|\nabla R_\lambda\|_{L^2(Q)}
\\
& \lesssim
\|\mathcal{A}\|_{L^2(Q)}
\left(
\sum_{\ell=0}^{N}
\frac{\|\widetilde m_\ell\|_{L^\infty(Q)}}{\lambda^\ell}
\right)
\|\mathcal L_{\mathcal{A}^{(2)},q^{(2)}}m_N\|_{L^2(Q)}
\lambda^{-N}\leq \frac{C}{10}.
\end{aligned}
\end{equation}
The same reasoning also gives us 
\begin{equation}
\label{eq:E_lambda_estimate_7}
2
\left|
\int_Q
\mathcal{A}\cdot\nabla R_\lambda 
\overline{\widetilde R_\lambda}
dx dt
\right|
\lesssim
\|\mathcal{A}\|_{L^\infty(Q)}
\|\mathcal L_{\mathcal{A}^{(2)},q^{(2)}}m_N\|_{L^2(Q)}
\|\mathcal L_{\mathcal{A}^{(1)},q^{(1)}}^\ast \widetilde m_N\|_{L^2(Q)}
\lambda^{-2N}\leq \frac{C}{10}.
\end{equation}

Finally, for the remaining three terms containing $W_{\mathrm{rem}}$, we utilize analogous computations as above to deduce that
\begin{equation}
\label{eq:E_lambda_estimate_8}
\left|
\int_Q
e^{\mathrm{i}\Phi_\lambda}
W_{\mathrm{rem}}
M_\lambda\overline{\widetilde R_\lambda}
dx dt
\right|
\lesssim
\|W_{\mathrm{rem}}\|_{L^2(Q)}
\left(
\sum_{k=0}^{N}
\frac{\|m_k\|_{L^\infty(Q)}}{\lambda^k}
\right)
\|\mathcal L_{\mathcal{A}^{(1)},q^{(1)}}^\ast \widetilde m_N\|_{L^2(Q)}
\lambda^{-N}\leq \frac{C}{10},
\end{equation}
\begin{equation}
\label{eq:E_lambda_estimate_9}
\left|
\int_Q
e^{-\mathrm{i}\Phi_\lambda}
W_{\mathrm{rem}}
R_\lambda\overline{\tilde M_\lambda}
dx dt
\right|
\lesssim
\|W_{\mathrm{rem}}\|_{L^2(Q)}
\left(
\sum_{\ell=0}^{N}
\frac{\|\widetilde m_\ell\|_{L^\infty(Q)}}{\lambda^\ell}
\right)
\|\mathcal L_{\mathcal{A}^{(2)},q^{(2)}}m_N\|_{L^2(Q)}
\lambda^{-N}\leq \frac{C}{10},
\end{equation}
and
\begin{equation}\label{eq:E_lambda_estimate_10}
\left|
\int_Q
W_{\mathrm{rem}}
R_\lambda\overline{\widetilde R_\lambda}
dx dt
\right|
\lesssim
\|W_{\mathrm{rem}}\|_{L^\infty(Q)}
\|\mathcal L_{\mathcal{A}^{(2)},q^{(2)}}m_N\|_{L^2(Q)}
\|\mathcal L_{\mathcal{A}^{1},q^{(1)}}^\ast \widetilde m_N\|_{L^2(Q)}
\lambda^{-2N}\leq \frac{C}{10}.
\end{equation}
Since $N\geq 1$ and $\lambda\geq \lambda_0>1$, the claimed estimate  \eqref{eq:E_lambda_uniform_bound} follows immediately from  \eqref{eq:E_lambda_estimate_1}--\eqref{eq:E_lambda_estimate_10}.

We now multiply the identity \eqref{eq:principal_plus_error} by $\frac{-i}{\lambda}$ on both sides. Due to \eqref{eq:E_lambda_uniform_bound}, it follows immediately that  $\frac{\mathcal E_\lambda}{\lambda}\to 0$ as $\lambda \to \infty$. Therefore, we obtain 
\[
\int_Q
2\mathrm{i} \mathcal{A}(t,x)\cdot \alpha 
m_0(t,x)\overline{\widetilde m_0(t,x)}
dx dt
=0.
\]
Using the expressions for $m_0$ and $\widetilde m_0$ from  \eqref{eq:m0_definition} with $A=\mathcal{A}^{(2)}$ and $A=\mathcal{A}^{(1)}$ respectively, we get
\begin{equation}
\label{eq:identity_with_amplitudes}
\int_Q
2\mathrm{i} \mathcal{A}(t,x)\cdot \alpha 
\chi_{\varepsilon}^2(t)\eta^2(x')
\exp\left(
\mathrm{i}\int_s^\infty
\alpha\cdot  \mathcal{A}(t,x'+p\alpha) dp
\right)
dx dt
=0.
\end{equation}
We next extend $\mathcal{A}$ by zero outside $Q$ without enlarging the spatial domain. This is possible because the assumption $\mathcal{A}^{(1)}=\mathcal{A}^{(2)}$  on the lateral boundary $\Sigma$ implies that $\mathcal{A}=0$ on $\Sigma$.
Furthermore, in view of  the decomposition \eqref{eq:x_decomposition_alpha},  we use   Fubini's theorem to rewrite  \eqref{eq:identity_with_amplitudes} as
\begin{equation}
\label{eq:identity_line_coordinates}
\int_0^T
\int_{\alpha^\perp}
\int_{\mathbb R}
2\mathrm{i} \mathcal{A}(t,x'+s\alpha)\cdot \alpha 
\chi_{\varepsilon}^2(t)\eta^2(x')
\exp\left(
\mathrm{i}\int_s^\infty
\alpha\cdot  \mathcal{A}(t,x'+p\alpha) dp
\right)
ds dx' dt
=0.
\end{equation}
Let us note that
\begin{align*}
2\mathrm{i}\alpha\cdot  \mathcal{A}(t,x'+s\alpha)
\exp\left(
\mathrm{i}\int_s^\infty
\alpha\cdot  \mathcal{A}(t,x'+p\alpha) dp
\right)
=
-2\frac{d}{ds}
\exp\left(
\mathrm{i}\int_s^\infty
\alpha\cdot  \mathcal{A}(t,x'+p\alpha) dp
\right).
\end{align*}
Therefore, we get from \eqref{eq:identity_line_coordinates} that
\[
\int_0^T
\int_{\alpha^\perp}
\chi_{\varepsilon}^2(t)\eta^2(x')
\int_{\mathbb R}
\frac{d}{ds}
\left[
\exp\left(
\mathrm{i}\int_s^\infty
\alpha\cdot \mathcal{A}(t,x'+p\alpha) dp
\right)
\right]
ds dx' dt
=0.
\]
By the fundamental theorem of calculus, we obtain
\[
\int_0^T
\int_{\alpha^\perp}
\chi_{\varepsilon}^2(t)\eta^2(x')
\left[
1-
\exp\left(
\mathrm{i}\int_{-\infty}^{\infty}
\alpha\cdot \mathcal{A}(t,x'+p\alpha) dp
\right)
\right]
dx' dt
=0.
\]
Since $\chi_\varepsilon^2$ and $\eta^2$ can be  arbitrarily chosen with supports contained in $(0,T)$ and $\alpha^\perp$, respectively, and $\mathcal{A}$ is smooth, the fundamental lemma of the calculus of variations implies that 
\begin{equation}\label{eq:exponential_identity_A0}
\exp\left(
\mathrm{i}\int_{-\infty}^{\infty}
\alpha\cdot  \mathcal{A}(t,x'+p\alpha) dp
\right)
=1
\end{equation}
for every $(t,x')\in (0,T)\times \alpha^\perp$.

We next show that
\begin{equation}
\label{eq:A0_int_over_R}
\int_{-\infty}^{\infty}
\alpha\cdot  \mathcal{A}(t,x'+p\alpha) dp=0
\end{equation}
by following the technique from \cite[Section 4]{liu2025h}. To this end, for fixed $t\in(0,T)$, we set
\[
F_t(x')
:=
\int_{-\infty}^{\infty}
\alpha\cdot \mathcal{A}(t,x'+p\alpha) dp,
\quad x'\in\alpha^\perp .
\]
Since  $\mathcal{A}$ is smooth, it holds that $F_t$ is smooth on $\alpha^\perp$.  From
\eqref{eq:exponential_identity_A0}, we have $F_t(x')\in 2\pi\mathbb Z$ for $x'\in\alpha^\perp$. Since $\alpha^\perp$ is connected, and $F_t$ is continuous, it holds that $F_t(\alpha^\perp)$ is also connected. On the other hand, since 
$F_t(\alpha^\perp)\subset 2\pi\mathbb Z$, and the only connected subsets of the discrete set $2\pi\mathbb Z$ are singletons, $F_t$ must be constant on $\alpha^\perp$.
Moreover, since $\mathcal{A}(t,\cdot)$ is compactly supported, we have
$F_t(x')=0$ for $|x'|$ sufficiently large. From here, we get \eqref{eq:A0_int_over_R} immediately.

By \cite[Lemma~6.1]{Sahoo_Vashith}, for each $t\in (0,T)$, there exists
a function $\Psi(t,\cdot) \in C_c^\infty(\Omega)$ such that
\begin{equation}
\label{eq:gauge_relation_A0}
\mathcal{A}^{(2)}(t,x)=\mathcal{A}^{(1)}(t,x)+\nabla\Psi(t,x),
\quad (t,x)\in Q.
\end{equation}
In view of the assumption that $\nabla\cdot \mathcal{A}^{(1)}=\nabla\cdot \mathcal{A}^{(2)}$ for each $t\in (0,T)$, we get
\[
\begin{cases}
\Delta_x \Psi(t,\cdot)=0 & \text{ in } \Omega,\\
\Psi(t,\cdot)=0 & \text{ on }\partial\Omega.
\end{cases}
\]
By \cite[Theorem 8.12]{gilbarg_trudinger_1983}, we have $\Psi(t,\cdot)|_{\Omega}=0$  for each $t\in(0,T)$. Hence, we conclude from \eqref{eq:gauge_relation_A0} that $\mathcal{A}^{(2)}=\mathcal{A}^{(1)}$ in $Q$.

We now proceed to show that $q_1=0$ in $Q$. By substituting $\mathcal{A}^{(1)}=\mathcal{A}^{(2)}$ into the integral identity \eqref{eq:integral_identity_A0_q1}, we obtain 
\[
\int_{Q}
q_{1} v_{2} \overline{v}_{1} dx dt
=0.
\]
Substituting the geometric optics solutions $v_{1}(t,x)$ and $v_{2}(t,x)$ from \eqref{eq:GO_solution_v_lambda} and \eqref{eq:GO_solution_u_lambda_conjugate}, we obtain
\[
\int_{Q}
q_{1}
\left(
\sum_{k=0}^{N}\frac{m_{k}}{\lambda^{k}}
+
R_{\lambda}
\right)
\left(
\sum_{k=0}^{N}\frac{\overline{\widetilde m_{k}}}{\lambda^{k}}
+
\overline{\widetilde R_{\lambda}}
\right)
dx dt
=0.
\]
Arguing similarly to  the proof for the magnetic potential $\mathcal{A}$, we have
\[
\int_Q
q_1(t,x) 
m_0(t,x)\overline{\widetilde m_0(t,x)}
dx dt
=0.
\]
We next choose
\[
m_0= \chi(t)e^{-\mathrm{i}x\cdot\xi}e^{\mathrm{i}\int_s^\infty \alpha\cdot \mathcal{A}(t,x'+p\alpha) dp} \quad \text{ and } \widetilde{m}_0= \chi(t)e^{\mathrm{i}\int_s^\infty \alpha\cdot \mathcal{A}(t,x'+p\alpha) dp}
\]
for any $\chi\in C_c^\infty(0,T)$ and $\xi\in\alpha^\perp$, which are valid according to \eqref{eq:m0_definition}. Therefore, we obtain
\[
m_0(t,x)\overline{\widetilde m_0(t,x)}
=
\chi^2(t)e^{-\mathrm{i}x\cdot\xi}.
\]
Hence, it follows that 
\[
\int_Q
q_1(t,x)\chi^2(t)e^{-\mathrm{i}x\cdot\xi}
dx dt
=0
\]
for every $\chi\in C_c^\infty(0,T)$ and every $\xi\in\alpha^\perp$. Thus, an application of Fubini's theorem yields that 
\[
\int_0^T
\chi^2(t)
\left(
\int_\Omega
q_1(t,x)e^{-\mathrm{i}x\cdot\xi}
dx
\right)
dt
=0.
\]
By the fundamental lemma of the calculus of variations, it holds that
\[
\int_\Omega
q_1(t,x)e^{-\mathrm{i}x\cdot\xi}
dx
=0
\]
for every $t\in(0,T)$  and every $\xi\in\alpha^\perp$.
Since the direction $\alpha\in\mathbb R^n\setminus\{0\}$ is arbitrary for every $\xi\in\mathbb R^n$, we can choose $\alpha\in\mathbb R^n\setminus\{0\}$ such that $\xi\in\alpha^\perp$. Therefore, we have
\[
\int_\Omega
q_1(t,x)e^{-\mathrm{i}x\cdot\xi}
dx
=0, \quad \text{ for all } (t,\xi)\in(0,T)\times\mathbb R^n.
\]

For each fixed $t\in(0,T)$, we extend $q_1(t,\cdot)$  by zero outside $\Omega$ and denote this extension by the same letter. Then we have $\widehat{q_1}(t,\xi)=0$ for all $(t,\xi)\in(0,T)\times\mathbb R^n$, where the Fourier transform is taken with respect to the spatial variable. 
By the injectivity of the Fourier transform, we conclude that $q_1=0$ in $Q$.  

\subsection{Recovery of  $q_2$}
\label{sec:second_order_linearization}
The goal of this subsection is to establish the unique determination of  the scalar potential $q_{2}$ from the DN map defined in \eqref{eq:def_DN_map}. The proof follows the same general strategy as in Subsection \ref{subsec:first_order_linearization}.  Nevertheless, we need a stronger decay estimate for the remainder term $R_{\lambda}$, which is given in Corollary \ref{Corollary: geom opt sol 1}. 

Let us begin with the second order linearization. To this end, let $\left(\mathcal{A}^{(j)},q^{(j)} \right)\in C^{\infty}(\overline Q;\R^n)\times C^{\infty}(\overline Q\times \C;\C) $, and let $\varepsilon = (\varepsilon_1,\varepsilon_{2})\in \C^2$. 
Let $u_j (t,x; \varepsilon)$, $j=1,2$, be solutions to the     IBVP
\begin{equation}
\label{eq:ibvp_2nd_linearization}
\begin{cases}
\mathcal{L}_{\mathcal{A}^{(j)},q^{(j)}} u_j=0 & \text{ in } Q ,
\\
u_j=\varepsilon_1 f_1  + \varepsilon_{2} f_{2} & \text{ on } \Sigma,
\\
u_j(0, \cdot) =0 &\text{ in } \Omega,
\end{cases}
\end{equation}
where $\left\|\varepsilon_1 f_1+\varepsilon_2 f_2 \right\|_{H^{m+\frac52, m+\frac52}(\Sigma)} \leq \varepsilon_{0}$, and $f_p\in  H^{m+\frac52, m+\frac52}(\Sigma)$, $p=1,2$, satisfies the condition  $\partial^{k}_{t}f_{p}(0,\cdot)|_{\partial\Omega}=0$ for all $0\leq k \leq m$. 
From Theorem \ref{thm:wellposedness}, we have $u_j (\cdot,\cdot; \varepsilon)\in H^{m+1,m+2}(Q)$. 
By applying $\p_{\varepsilon_{p}}|_{\varepsilon=0}$ to \eqref{eq:ibvp_2nd_linearization}  and using that $u_{j}(t,x; 0)=0$, $j=1,2$, we see that the function $v_{j,p}:= \p_{\varepsilon_{p}} u_j(t,x;\varepsilon)|_{\varepsilon=0}$ satisfies the following IBVP:
\begin{equation*}
\begin{cases}
\mathcal{L}_{\mathcal{A}^{(j)},q_{1}^{(j)}} v_{j,p}=0 & \text{ in } Q ,
\\
v_{j,p}= f_{p}& \text{ on } \Sigma,
\\
v_{j,p}(0, \cdot) =0 &\text{ in } \Omega.
\end{cases}
\end{equation*}
Here the linear operator $\mathcal{L}_{\mathcal{A}^{(j)},q_{1}^{(j)}}$ is  given by \eqref{eq:def_linear} with potentials $\mathcal{A}^{(j)}$ and $q_{1}^{(j)}$. We have already established in Subsection \ref{subsec:first_order_linearization} that $\mathcal{A}:= \mathcal{A}^{(1)}=\mathcal{A}^{(2)} $ and $q_1:= q_1^{(1)}=q_1^{(2)} $.
Thus, by the uniqueness for the linear IBVP, we obtain $v_p:= v_{1,p}=v_{2,p}$, $p=1,2$. 

We next apply the operator  $\partial^2_{\varepsilon_{1}\varepsilon_{2}}|_{\varepsilon=0}$ to the problem \eqref{eq:ibvp_2nd_linearization} to get that the function $w_j
:=
\partial_{\varepsilon_1}\partial_{\varepsilon_2}
u_j|_{\varepsilon=0}$ satisfies the IBVP
\begin{equation}
\label{eq:second_linearized_equation_j}
\begin{cases}
\mathcal L_{\mathcal{A},q_1}w_j
+
2q_2^{(j)}v_1v_2
=0
& \text{ in } Q,
\\
w_j=0
& \text{ on } \Sigma,
\\
w_j(0,\cdot)=0
& \text{ in } \Omega.
\end{cases}
\end{equation}
Let us denote $w:=w_1-w_2$ and $q_2:=q_2^{(1)}-q_2^{(2)}$.  Then we immediately obtain from 
\eqref{eq:second_linearized_equation_j} that
\begin{equation}
\label{eq:second_linearized_difference}
\begin{cases}
\mathcal L_{\mathcal{A},q_1}w
+
2q_2v_1v_2
=0
& \text{ in } Q,
\\
w=0
& \text{ on } \Sigma,
\\
w(0,\cdot)=0
& \text{ in } \Omega.
\end{cases}
\end{equation}
Let $w_0$ be a solution to the  problem
\begin{equation}
\label{eq:adjoint_problem_w0}
\begin{cases}
\mathcal L_{\mathcal{A},q_1}^{*}w_0=0
& \text{ in } Q,
\\
w_0(T,\cdot)=0
& \text{ in } \Omega.
\end{cases}
\end{equation}
Multiplying the first equation in
\eqref{eq:second_linearized_difference} by $\overline{w_0}$ and integrating over $Q$,  and arguing as in Subsection \ref{subsec:first_order_linearization}, we have the integral identity
\begin{equation}
\label{eq:second_linearized_integral_identity}
\int_Q
2q_2v_1v_2
\overline{w_0} dx dt
=0.
\end{equation}

We now choose the vectors $\alpha_1,\alpha_2,\alpha_3 \in \R^n \setminus \{0\}$ such that
\begin{equation}
\label{eq:alpha_condition}
\alpha_1+\alpha_2=\alpha_3
\quad \text{ and }\quad 
|\alpha_1|^2+|\alpha_2|^2=|\alpha_3|^2.
\end{equation}
By Corollaries \ref{Corollary: geom opt sol 1} and \ref{Corollary:adjoint geom opt sol 1}, we have geometric optics solutions given by
\begin{equation}
\label{eq:GO_solution_v1}
v_1(t,x)
=
e^{\mathrm{i}(\lambda x\cdot\alpha_1-\lambda^2|\alpha_1|^2t)}
\left(
\sum_{k=0}^{N}\frac{m_k^1(t,x)}{\lambda^k}\right)
+
R_\lambda^1(t,x),
\end{equation}
\begin{equation}
\label{eq:GO_solution_v2}
v_2(t,x)
=
e^{\mathrm{i}(\lambda x\cdot\alpha_2-\lambda^2|\alpha_2|^2t)}
\left(
\sum_{k=0}^{N}\frac{m_k^2(t,x)}{\lambda^k}\right)
+
R_\lambda^2(t,x),
\end{equation}
and
\begin{equation}
\label{eq:GO_solution_w0}
w_0(t,x)
=
e^{\mathrm{i}(\lambda x\cdot\alpha_3-\lambda^2|\alpha_3|^2t)}
\left(
\sum_{k=0}^{N}\frac{m_k^3(t,x)}{\lambda^k}\right)
+
R_\lambda^3(t,x).
\end{equation}
Here the smooth amplitude $m_k^j$, $j=1,2$, solves the transport equation \eqref{eqn: Transport} with $\alpha = \alpha_j$ and $A=\mathcal{A}$, while $m_k^3$ is a solution to the same equation with $\alpha = \alpha_3$ and $A=\mathcal{A}$. Moreover, the remainder terms $R_\lambda^1$ and $R_\lambda^2$ satisfy the estimate \eqref{eq:est_remainder_higher_order}, whereas $R_\lambda^3$ satisfies the estimate \eqref{eq:R_lambda_Hm_estimate_adjoint}.
For $j=1,2,3$, let us write $\Phi_j(t,x):=\lambda x\cdot\alpha_j-\lambda^2|\alpha_j|^2t$, and  denote
\begin{equation}
\label{eq:a_j_definition}
a_j(t,x):=
\sum_{k=0}^{N}\frac{m_k^j(t,x)}{\lambda^k}
+
R_\lambda^j(t,x).
\end{equation}
Using these notations, the solutions \eqref{eq:GO_solution_v1}--\eqref{eq:GO_solution_w0} read $v_j=e^{\mathrm{i}\Phi_j}a_j$, $j=1,2,3$. 

Let us observe from \eqref{eq:alpha_condition} that 
\begin{equation}
\label{eq:phase_cancellation_second_linearization}
\Phi_1+\Phi_2-\Phi_3=0.
\end{equation} 
Also, we utilize  \eqref{eq:phase_cancellation_second_linearization} to compute that
\[
\nabla(v_1v_2)
=
e^{\mathrm{i}\Phi_3}
\left[
\mathrm{i}\lambda\alpha_3 a_1a_2
+
\nabla(a_1a_2)
\right].
\]
Thus, by substituting the geometric solutions into the integral identity \eqref{eq:second_linearized_integral_identity}, we obtain
\begin{align*}
\int_Q
2q_2
a_1a_2\overline{a_3} dx dt
=0.
\end{align*}

We now analyze the integral above  as $\lambda \to \infty$. To this end, by  Corollaries \ref{Corollary: geom opt sol 1} and \ref{Corollary:adjoint geom opt sol 1}, we get from \eqref{eq:a_j_definition} that $a_j \to m_0^j$ in $H^{m+1}(Q)$ as $ \lambda\to\infty$, $j=1,2,3$. 
Furthermore, since $H^{m+1}(Q)$ is a Banach algebra, it follows that $a_1a_2\overline{a_3}
\to m_0^1m_0^2\overline{m_0^3}$ in $H^{m+1}(Q)$ as $\lambda\to\infty$. 
Therefore, we obtain 
\[
\int_Q\ q_2
a_1a_2\overline{a_3} dx dt
\longrightarrow
\int_Q
q_2 m_0^1(t,x)m_0^2(t,x)\overline{m_0^3(t,x)}
dx dt,
\quad\lambda\to\infty .
\]
Hence, we conclude from the previous analysis that 
\begin{equation}
\label{eq:A1_alpha_leading_limit_zero}
0
=
\int_Q
2q_2
m_0^1(t,x)m_0^2(t,x)\overline{m_0^3(t,x)}
dx dt .
\end{equation}

We now choose the smooth amplitudes
$m_0^1$, $m_0^2$, and $m_0^3$ such that
\[
m_0^1(t,x)m_0^2(t,x)\overline{m_0^3(t,x)}
=
\chi^3(t)e^{-\mathrm{i}x\cdot \xi},
\]
where $\chi\in C_c^\infty(0,T)$  and $\xi\in\alpha^\perp$. Then the identity  \eqref{eq:A1_alpha_leading_limit_zero} gives us
\[
\int_Q
q_{2}(t,x)
\chi^{3}(t)
e^{-\mathrm{i}x\cdot\xi}
dx dt
=0.
\]
Proceeding as in the proof of uniqueness of the scalar potential $q_1$, we obtain $\widehat{q_2}(t,\xi)=0$ for all 
$(t,\xi)\in(0,T)\times\mathbb R^n$. 
By the injectivity of the Fourier transform, we conclude that $q_2=0$ in $Q$.

\subsection{Recovery of $q_{i+1}$, $i\ge 2$}
\label{subsec:higher_order_linearization}

In this subsection, we prove the uniqueness result for the higher order scalar potentials appearing in the power series expansion \eqref{eq:expansion_q}
by applying higher order linearization and using an induction argument.  
Assume that  $q_{\ell}^{(1)}=q_{\ell}^{(2)}$ for $1\leq \ell\leq r-1$ in $Q$ for some integer $r\geq 2$.
Our goal is to show that  $q_{r}^{(1)}=q_{r}^{(2)}$ in $Q$.

Let $q^{(j)} \in C^{\infty}(\overline Q\times \C;\C)$, $j=1,2$, be the nonlinear potential given by the expansion  \eqref{eq:expansion_q}. Let $f_p \in  H^{m+\frac52, m+\frac52}(\Sigma)$, $p=1, \dots, r$, be such that $\partial^{k}_{t}f_{p}(0,\cdot)|_{\partial\Omega}=0$ for all   $0\leq k \leq m+1$, and let $\varepsilon=(\varepsilon_1,\ldots,\varepsilon_r) \in \C^r $. 
For $|\varepsilon|>0$ sufficiently small such that $\left\|\varepsilon_{1}f_1+ \cdots + \varepsilon_rf_r \right\|_{H^{m+\frac52, m+\frac52}(\Sigma)} \leq \varepsilon_{0}$, suppose that the function  $u_j (\cdot,\cdot; \varepsilon) \in H^{m+1,m+2}(Q)$, $j=1,2$, is a solution to the  IBVP
\begin{equation}
\label{eq:ibvp_higher_linearization}
\begin{cases}
\mathcal{L}_{\mathcal{A},q^{(j)}} u_j=0 & \text{ in } Q ,
\\
u_j=\varepsilon_{1}f_1+ \cdots + \varepsilon_rf_r & \text{ on } \Sigma,
\\
u_j(0, \cdot) =0 &\text{ in } \Omega.
\end{cases}
\end{equation}

We now perform the higher order linearization. 
Let us again denote $v_{j,p}:=
\partial_{\varepsilon_p}u_j(t,x;\varepsilon)\big|_{\varepsilon=0}$. By the induction hypothesis, as well as the uniqueness of the linearized IBVP, we have $v_p:= v_{1,p}=v_{2,p} $ for all $p=1,\ldots,r$. Hence, by applying   $\partial_{\varepsilon_1}\cdots\partial_{\varepsilon_r}
\big|_{\varepsilon=0}$ to  \eqref{eq:ibvp_higher_linearization},  direct computations yield that the function $w_j^{(r)}
:=
\partial_{\varepsilon_1}\cdots\partial_{\varepsilon_r}
u_j(t,x;\varepsilon)\big|_{\varepsilon=0}$ satisfies the IBVP 
\[
\begin{cases}
\begin{aligned}
\mathcal L_{\mathcal{A},q_1}w_j^{(r)}
&
+
r!q_r^{(j)}v_1v_2\cdots v_r
+
\mathcal F_r^{(j)}
=0
\end{aligned}
& \text{ in } Q,
\\ 
w_j^{(r)}=0
& \text{ on } \Sigma,
\\ 
w_j^{(r)}(0,\cdot)=0
& \text{ in } \Omega .
\end{cases}
\]
Arguing similarly as in  \cite{Krupchyk_Uhlmann_semilinear_partial,Liu_Wang}, we see that the function $\mathcal F_r^{(j)}$ contains only lower order coefficients, which are independent of $j=1,2$. Thus, by the induction hypothesis,  we have $\mathcal F_r^{(1)}=\mathcal F_r^{(2)}$. Hence, by setting $q_r:=q_r^{(1)}-q_r^{(2)}$, the function $w^{(r)}:=w_1^{(r)}-w_2^{(r)}$ is a solution to the following IBVP:
\begin{equation}
\label{eq:rth_linearized_difference}
\begin{cases}
\begin{aligned}
\mathcal L_{\mathcal{A},q_1}w^{(r)}
+
r!q_r v_1v_2\cdots v_r
=0
\end{aligned}
& \text{ in } Q,
\\
w^{(r)}=0
& \text{ on } \Sigma,
\\
w^{(r)}(0,\cdot)=0
& \text{ in } \Omega .
\end{cases}
\end{equation}
Multiplying \eqref{eq:rth_linearized_difference} by $\overline{w_0}$, where $w_{0}$ solves the problem \eqref{eq:adjoint_problem_w0}, and integrating over $Q$, we obtain the integral identity
\begin{align*}
0
=
\int_Q
& 
r!q_r v_1v_2\cdots v_r
\overline{w_0} dx dt .
\end{align*}

Next, we apply Corollaries \ref{Corollary: geom opt sol 1} and \ref{Corollary:adjoint geom opt sol 1} to choose geometric optics solutions of the form
\[
v_p(t,x)
=
e^{\mathrm{i}(\lambda x\cdot\alpha_p-\lambda^2|\alpha_p|^2t)}
\left(
\sum_{k=0}^{N}\frac{m_k^p(t,x)}{\lambda^k}
+
R_\lambda^p(t,x)
\right),
\quad p=1,\ldots,r,
\]
and
\[
w_0(t,x)
=
e^{\mathrm{i}(\lambda x\cdot\alpha_{r+1}-\lambda^2|\alpha_{r+1}|^2t)}
\left(
\sum_{k=0}^{N}\frac{m_k^{r+1}(t,x)}{\lambda^k}
+
R_\lambda^{r+1}(t,x)
\right).
\]
Here the smooth amplitude $m_k^p$, $p=1,2, \dots, r$, solves the transport equation \eqref{eqn: Transport} with $\alpha = \alpha_p$ and $A=\mathcal{A}$, and $m_k^{r+1}$ is a solution to the same equation with $\alpha = \alpha_{r+1}$ and $A=\mathcal{A}$. The remainder $R_\lambda^p$, $p=1, \dots, r$, satisfies the estimate \eqref{eq:est_remainder_higher_order}, while $R_\lambda^{r+1}$ satisfies the estimate \eqref{eq:R_lambda_Hm_estimate_adjoint}. Moreover, the vectors $\alpha_1,\ldots,\alpha_r,\alpha_{r+1}\in \R^n \setminus \{0\}$ are chosen such that
\[
\sum_{p=1}^{r}\alpha_p=\alpha_{r+1},
\qquad
\sum_{p=1}^{r}|\alpha_p|^2=|\alpha_{r+1}|^2.
\]
From here, we argue analogously to Subsection \ref{sec:second_order_linearization} to conclude that  $q_r=0$ in $Q$. 

Therefore, it follows from induction that  $q_{\ell}^{(1)}=q_{\ell}^{(2)}$ in $Q$ for all $\ell \ge 1$. Finally, in view of the series  expansion  \eqref{eq:expansion_q}, which converges in the $C^\infty$-topology, we conclude that  $q^{(1)}=q^{(2)}$ in $\overline{Q} \times \mathbb{C}$. This completes the proof of Theorem \ref{thm:main_result}.

\section{Proof of Theorem \ref{thm:main_result_partial_data}}
\label{sec:proof_partial_data}

The main objective of this section is to establish   Theorem \ref{thm:main_result_partial_data}.  The key step is to transform the partial data problem into a full data one by invoking the unique continuation principle given in  Lemma \ref{UCP}. 

Let us  begin this section with the following construction.  Let $\mathcal{O}\subseteq \overline{\Omega} $ be a non-empty set such that $\mathcal{O}$ is an open neighborhood of $\partial\Omega$. 
We first define the admissible class of coefficients $\mathcal{A}$ and $q_{1}$   as 
\[
\mathscr{A}
:=  
\left\{(\mathcal{A},q_{1})\in C^{\infty}(\overline Q)\times C^{\infty}(\overline Q):
\mathcal{A}=0 \text{ and } q_{1}=0
\text{ in } (0,T)\times\mathcal{O}
\right\}.
\]
Suppose that $\mathcal{O}_{j} \subseteq \mathcal{O}$, $j = 1, 2, 3$,  are open neighborhoods of $\partial\Omega$ such that $\overline{\mathcal{O}}_{j+1} \subset \mathcal{O}_{j}$.
We then define the set $\Omega_{j} := \Omega \setminus \overline{\mathcal{O}_{j}}$, 
and the cut-off function $\theta \in C^{\infty}(\overline\Omega)$ such that $0\le \theta \le 1$ by the following formula:
\begin{equation}\label{eqn: construction theta}
\theta:= \begin{cases}
0 \qquad\text{ in } \mathcal{O}_{3},\\
1  \qquad\text{ in } \Omega_{2}.
\end{cases}
\end{equation}

We are now ready to introduce the unique continuation property, which was originally established in \cite[Corollary 1]{Bellassoued_Fraj}. 

\begin{lem}
\label{UCP}
Let $\Omega \subseteq \R^n$, $n\ge 2$, be a bounded domain with smooth boundary $\p \Omega$.  For any $T>0$,  let $Q=(0,T) \times \Omega$.  Suppose that $\Gamma \subseteq \p \Omega$ is an arbitrary nonempty open set, and  $\Sigma^\sharp := (0,T)\times \Gamma$. Assume that  $(\mathcal{A},q_{1})\in \mathscr{A}$. Let  $v \in H^{1,2}(Q)$ be a solution to the linear IBVP
\[
\begin{cases}
\mathcal{L}_{\mathcal{A},q_{1}}v  = g_{0} & \text{ in } Q,
\\
v = 0 & \text{ on } \Sigma,
\\
v (0,\cdot)=0 & \text{ in } \Omega,
\end{cases}
\]
where 
$g_{0} \in L^{2}(Q)$ and $\operatorname{supp}(g_{0}) \subset (0,T) \times (\Omega\setminus\mathcal{O}),$
such that $\partial_{\nu}v = 0$ on $\Sigma^{\sharp}$. Then $v=0  \text{ in } (0,T) \times (\Omega_{3}\setminus \Omega_{2})$.
\end{lem}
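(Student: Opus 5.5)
The plan is to reduce the statement to a unique continuation result for the free Schr\"odinger operator near the boundary, since there the coefficients vanish. Because $(\mathcal{A},q_1)\in\mathscr{A}$, both $\mathcal{A}$ and $q_1$ vanish in $(0,T)\times\mathcal{O}$. Moreover $\operatorname{supp}(g_0)\subset(0,T)\times(\Omega\setminus\mathcal{O})$. Hence, in the collar $(0,T)\times\mathcal{O}$, the function $v$ solves $\mathrm{i}\partial_t v+\Delta v=0$ with $v=0$ on $\Sigma$ and $\partial_\nu v=0$ on $\Sigma^\sharp$. Since $\mathcal{A}=0$ near $\Sigma$, the magnetic Neumann condition and $\partial_\nu v$ coincide there.

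\textbf{Step 1: vanishing near $\Sigma^\sharp$.} I would extend $v$ by zero across $\Gamma$ to a slightly larger domain $\widetilde\Omega\supset\Omega$ obtained by attaching a small open set $D$ to $\Omega$ along $\Gamma$. Call the extension $\widetilde v$. Because both Cauchy data vanish on $(0,T)\times\Gamma$ and $v\in H^{1,2}(Q)$, the extension lies in $H^{1,2}$ near $(0,T)\times\Gamma$. It solves $\mathrm{i}\partial_t\widetilde v+\Delta\widetilde v=0$ in $(0,T)\times(\mathcal{O}\cup\Gamma\cup D)$, and it vanishes on the open set $(0,T)\times D$.

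\textbf{Step 2: propagation across the collar.} Next I would propagate the zero set through the connected region $(0,T)\times(\mathcal{O}_3\cup D)$. I would use the unique continuation property for the Schr\"odinger operator across spatial (time-like) hypersurfaces. This follows from a Carleman estimate with a weight depending only on $x$, for example $e^{\tau\phi(x)}$ with $\phi$ pseudoconvex, or from the Holmgren--John type result for operators with time-independent principal symbol. This is exactly the content of \cite[Corollary 1]{Bellassoued_Fraj}, which I would invoke directly. A chain-of-balls argument in the spatial variable then gives $v=0$ in $(0,T)\times\mathcal{O}_3$, or at least in $(0,T)\times(\mathcal{O}_3\setminus\overline{\mathcal{O}_2})$. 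Here one may assume each component of the collar meets $\Gamma$, or shrink $\mathcal{O}_j$ so that $\mathcal{O}_3\setminus\overline{\mathcal{O}_2}$ is connected to $\Gamma$ through $\mathcal{O}$. Since $\Omega_3\setminus\Omega_2$ is, up to its boundary, contained in $\overline{\mathcal{O}_2}\setminus\mathcal{O}_3\subset\mathcal{O}$, the same propagation covers $(0,T)\times(\Omega_3\setminus\Omega_2)$.

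\textbf{Main obstacle.} The hard step is Step 2, the unique continuation for the Schr\"odinger operator. Because $\mathrm{i}\partial_t+\Delta$ is not elliptic, continuation holds only across surfaces that are non-characteristic in the appropriate sense. Here these are surfaces of the form $\{\psi(x)=c\}$, which are fine. One must also handle the low regularity $H^{1,2}$, either by a density argument or by using Carleman estimates valid in $H^{1,2}$.

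There is a further subtlety. The conclusion is only needed on $(0,T)\times(\Omega_3\setminus\Omega_2)$, not in all of $Q$. Global continuation into $\Omega\setminus\mathcal{O}$ is impossible anyway, because the source $g_0$ may be nonzero there. The propagation must therefore stay within the region where $g_0=0$ and the coefficients vanish, which is exactly $\mathcal{O}$.
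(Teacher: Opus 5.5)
Your proposal matches the paper's approach. The paper gives no argument of its own and simply invokes \cite[Corollary 1]{Bellassoued_Fraj}, and your sketch is the standard mechanism behind that result: extend $v$ by zero across $\Gamma$, then use unique continuation for the free operator $\mathrm{i}\partial_t+\Delta$ through the collar $\mathcal{O}$, where $\mathcal{A}$, $q_1$ and $g_0$ all vanish and which contains $\overline{\mathcal{O}_2}\supset\Omega_3\setminus\Omega_2$. Note that the propagation must run through $\mathcal{O}$ rather than $\mathcal{O}_3$, and that $\mathcal{O}_3\setminus\overline{\mathcal{O}_2}$ is empty.

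The connectivity caveat you raise is a genuine restriction on the statement, and shrinking the $\mathcal{O}_j$ cannot remove it. Suppose $\partial\Omega$ is disconnected and $\Gamma$ misses some component $S$, with the collar $\mathcal{O}$ not connecting $S$ to the other components. Take $\mathcal{A}=q_1=0$, and let $w\not\equiv 0$ solve $\mathrm{i}\partial_t w+\Delta w=0$, $w(0,\cdot)=0$, with $w=0$ on $S$. Let $\chi$ equal $1$ near $S$ and $0$ near the other components, with $\nabla\chi$ supported in $\Omega\setminus\mathcal{O}$. Then $v=\chi w$ satisfies every hypothesis of the lemma but does not vanish in the part of $\Omega_3\setminus\Omega_2$ near $S$. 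So the lemma, and its use in the paper, tacitly requires $\partial\Omega$ to be connected, or at least $\Gamma$ to meet every component.
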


Suppose that the function $v \in L^{2}\left(0,T;H^{2}(\Omega)\cap H^{1}_{0}(\Omega)\right) \cap H^{1}(0,T;L^{2}(\Omega))$ is a solution to the IBVP \eqref{eq:linearized_difference_v}. By a direct computation, the function $\tilde{v}:= \theta v$ satisfies the IBVP
\begin{equation}
\label{eqn: IBVP tildew}
\begin{cases}
\mathcal{L}_{\mathcal{A}^{(1)},q_{1}^{(1)}} \tilde{v} = \theta F - \left[\theta, \Delta+2\mathrm{i}\mathcal{A}^{(1)}\cdot \nabla\right]v & \text{ in } Q,
\\
\tilde{v} = 0 & \text{ on } \Sigma,
\\
\tilde{v} (0,\cdot)=0 & \text{ in } \Omega,
\end{cases}
\end{equation}
where $F:= \left(
2\mathrm{i}\mathcal{A}\cdot \nabla
+\mathrm{i}\nabla \cdot \mathcal{A}
-\left|\mathcal{A}^{(2)}\right|^2
+\left|\mathcal{A}^{(1)}\right|^2
+q_{1}
\right)v_{2}$, and $\left[\cdot,\cdot \right]$ denotes the commutator. 
Since $\mathcal{A}=0$ and $q_1=0$ in $(0,T)\times \mathcal{O}$, it follows immediately that $F = 0$ in $(0,T)\times \mathcal{O}$. Hence, $\supp F \subset (0,T) \times (\Omega \setminus \cO)$. Thus, by applying Lemma \ref{UCP}, we conclude that
\begin{equation}
\label{eq:v_vanish}
\tilde v=0 \quad \text{ in } (0,T)\times (\Omega_{3}\setminus \Omega_{2}).
\end{equation}
Moreover, as $\theta = 1$ in $\Omega_{2}$ and 
$\theta = 0$ in $\mathcal{O}_{3}\subset \mathcal{O}$, we get that 
\begin{equation}
\label{eqn:thetaF=f}
\theta F = F \quad \text{in } (0,T)\times\Omega .
\end{equation}
Let us now compute that  
\[
[\theta,\Delta+2\mathrm{i}\mathcal{A}^{(1)}\cdot\nabla]v
=-2\nabla\theta\cdot\nabla v
-\left(\Delta\theta+2\mathrm{i}\mathcal{A}^{(1)}\cdot\nabla\theta\right)v .
\]
Since $\theta$ is constant on $\mathcal{O}_{3} \cup \Omega_{2}$, we have $\nabla \theta = 0$ and $\Delta \theta = 0$ in $(0,T)\times (\mathcal{O}_{3}\cup\Omega_{2})$. Therefore, in view of \eqref{eq:v_vanish}, we have 
\begin{equation}\label{eqn: commutator vanishes}
[\theta,\Delta+2\mathrm{i}\mathcal{A}^{(1)}\cdot\nabla]v  = 0 \quad \text{ in } Q.
\end{equation}
Thus, using \eqref{eqn:thetaF=f} and \eqref{eqn: commutator vanishes}, we  rewrite the problem \eqref{eqn: IBVP tildew} as 
\begin{equation}
\label{eqn: IBVP tildew_final}
\begin{cases}
\mathcal{L}_{\mathcal{A}^{(1)},q_{1}^{(1)}} \tilde{v} =  F  & \text{ in } Q,
\\
\tilde{v} = 0 & \text{ on } \Sigma,
\\
\tilde{v} (0,\cdot)=0 & \text{ in } \Omega,
\end{cases}
\end{equation}
On the other hand, by  \eqref{eqn: construction theta}, we have
$\theta=0$ in $\mathcal{O}_{3}$. Hence, it holds that $\widetilde v=0$ in $\mathcal{O}_{3}$. Furthermore, since $\mathcal{O}_{3}$ is a neighborhood of
$\partial\Omega$, it follows that
\begin{equation}
\label{eq:normal_derivative_tilde_v_zero}
\partial_{\nu} \widetilde{v}\big|_{\Sigma}=0.
\end{equation}

Let $v_{1}$ be a solution to the problem  \eqref{eq:adjoint_problem_first_linearization}. We next multiply  the first equation of \eqref{eqn: IBVP tildew_final} by $\overline{v}_{1}$ and integrate over $Q$. Due to the conditions $\widetilde{v}=0$ on $\Sigma$ and  $\widetilde{v}(0,\cdot)= v_1(T,\cdot)=0$ in $\Omega$, in conjunction with \eqref{eq:normal_derivative_tilde_v_zero}, an integration by parts gives us the integral identity \eqref{eq:integral_identity_A0_q1}. 
Therefore, applying the same analysis in Subsection \ref{subsec:first_order_linearization} from the identity \eqref{eq:integral_identity_A0_q1} onward, we obtain $\mathcal{A}^{(1)}=\mathcal{A}^{(2)}$  and $q_1^{(1)}=q_1^{(2)}$  in $Q$. 

The same arguments also apply to higher order linearization. Indeed, let the function $w^{(r)}$ be as in the initial boundary problem \eqref{eq:rth_linearized_difference}. Due to the assumption that $\mathcal{A}=0$ and $q_1=0$ on $(0,T)\times \mathcal{O}$, the function $\theta w^{(r)}$ satisfies a similar problem whose  source term is supported away from $(0,T)\times \mathcal{O}$ with vanishing commutator terms. Hence, the partial data problem in question is reduced to a full data one after the $r^{\mathrm{th}}$ order linearization. Then the same arguments as those in Subsections \ref{sec:second_order_linearization} and  \ref{subsec:higher_order_linearization} yield that $q^{(1)}=q^{(2)}$ in  $\overline{Q} \times \mathbb{C}$.
This completes the proof of Theorem \ref{thm:main_result_partial_data}.

\bibliographystyle{abbrv}
\bibliography{bib_nonlinear_dynamic_Schrodinger}

\end{document}